\title[Annealed invariance principle for random walks on random graphs in $\RR^d$]{Annealed invariance principle for random walks on random graphs generated by point processes in $\RR^d$}
\author[A. Rousselle]{Arnaud Rousselle$^\ddagger$
}
\thanks{
$\ddagger$ Normandie Universit\'e, Universit\'e de Rouen, 
Laboratoire de Math\'ematiques Rapha\"el Salem,
CNRS, UMR 6085, Avenue de l'universit\'e, BP 12, 
76801 Saint-Etienne du Rouvray Cedex, 
France
\newline
and 
\newline
Laboratoire Modal'X, Universit\'e Paris Ouest Nanterre La D\'efense, 200 avenue de la R\'epublique, 92000 Nanterre,
France
\newline
E-mail address: arnaud.rousselle@u-paris10.fr}
\def\0{\mathbf{0}}
\def\x{\mathbf{x}}
\def\d{\mathrm{d}}
\def\z{\mathbf{z}}
\def\N{\mathcal{N}}
\def\P{\mathcal{P}}
\def\E{\mathcal{E}}
\def\Q{\mathcal{Q}}
\def\X{\widetilde{X}}
\def\xit{\wideltilde{\xi}}
\def\x{\mathbf{x}}
\def\y{\mathbf{y}}
\def\z{\mathbf{z}}
\def\PP{\mathbb{P}}
\def\RR{\mathbb{R}}
\def\XX{\mathbb{X}}
\def\YY{\mathbb{Y}}
\def\ZZ{\mathbb{Z}}
\def\NN{\mathbb{N}}
\def\L{\mathcal{L}}
\def\gm{\mathfrak{m}}
\def\gcb{\overline{\mathfrak{c}}}
\def\Pt{\widetilde{P}}
\def\Nz{{\mathcal{N}_0}}
\def\Pz{{\mathcal{P}_0}}
\def\xiz{{\xi^0}}
\def\oC{{\overset{\circ}{C}}}
\def\gV{{\mathfrak{V}}}
\def\gVb{\overline{\mathfrak{V}}}
\def\gE{{\mathfrak{E}}}
\def\gEb{\overline{\mathfrak{E}}}
\def\gQ{{\mathfrak{Q}}}
\def\gG{{\mathfrak{G}}}
\def\gp{{\mathfrak{p}}}
\def\gGb{\overline{\mathfrak{G}}}
\def\xit{{\widetilde{\xi}}}
\def\gc{\mathfrak{c}}
\def\bPt{\widetilde{\mathbf{P}}}
\def\bEt{\widetilde{\mathbf{E}}}
\def\II{\mathbf{1}}
\newcommand{\sfrac}[2]{\kern.1em
        \raise.5ex\hbox{$#1$}\kern-.1em
        /\kern-.15em\lower.25ex\hbox{$#2$}}
\newtheorem*{ass*}{Assumptions}
\newtheorem{defi}{Definition}
\newtheorem{lemm}[defi]{Lemma}
\newtheorem{prop}[defi]{Proposition}
\newtheorem{theo}[defi]{Theorem}
\newtheorem{exem}[defi]{Example}
\newtheorem{rem}[defi]{Remark}
\newenvironment{dem}{\noindent {\bf Proof.}}
                    {\hfill $\square$}
\newcommand{\const}[1]{c_{#1}}
\def\bex{\begin{exem} \em }
\def\eex{\end{exem} }
\def\brem{\begin{rem} \em}
\def\erem{\end{rem} }
\def\bP{\mathbf{P}}
\def\bE{\mathbf{E}}
\def\bQ{\mathbf{Q}}
\def\deg{\operatorname{deg}}
\def\DT{\operatorname{DT}}
\def\Gab{\operatorname{Gab}}
\def\Vol{\operatorname{Vol}}
\begin{document}
\begin{abstract}
We consider simple random walks on random graphs embedded in $\mathbb{R}^d$ and generated by point processes such as Delaunay triangulations, Gabriel graphs and the creek-crossing graphs. Under suitable assumptions on the point process, we show an annealed invariance principle for these random walks. These results hold for a large variety of point processes including Poisson point processes,  Mat\'ern cluster and Mat\'ern hardcore processes which have respectively clustering and repulsiveness properties. The proof relies on the use the process of the environment seen from the particle. It allows to reconstruct the original process as an additive functional of a Markovian process under the annealed measure.  
\end{abstract}
\date{\today}
\maketitle



\maketitle
{\bf Key words:} Random walk in random environment; Delaunay triangulation; Gabriel graph; point process; annealed invariance principle; environment seen from the particle; electrical network. 

{\bf AMS 2010 Subject Classification :} Primary: 60K37, 60D05; secondary: 60G55; 05C81, 60F17.
\section{Introduction and results}
\subsection{Introduction}
 Random walks in random environments are natural objects for studying flows, molecular diffusions, heat conduction or other problems from statistical mechanics. Many papers concerning random walks on random perturbations of the lattice $\ZZ^d$ or random subgraphs of $\ZZ^d$ were published during the last forty years. Particular examples include random walks on supercritical percolation clusters and the so-called random conductance model (see \cite{BiskupLN} and references therein). An annealed invariance principle for such random walks was obtained in \cite{DFGW84,DFGW} while its quenched counterpart is proved in \cite{SidoravS,BergerBiskup,BP,MP,MathieuQuenched}. Few papers were written in the case where the random environment is generated by a point process in $\RR^d$. In the last decade, techniques were developed to study random walks on complete graphs generated by point processes with jump probability which is a decreasing function of the distance between points possibly modified by the presence of an energy mark at each point. The main results on this model include an annealed invariance principle \cite{FSS,FM}, isoperimetric inequalities \cite{CF}, recurrence or transience results \cite{CFG}, a quenched invariance principle and heat kernel estimates \cite{CFP}.
In \cite{RTCRWRG}, we answer the question of recurrence or transience of random walks on random graphs constructed according to the geometry of a random (infinite and locally finite) set of points such that Delaunay triangulations or Gabriel graphs. In the present paper, we are interested in stating annealed invariance principles for simple random walks on random graphs generated by  point processes. More precisely, given a realization $\xi$ of a point process  in $\RR^d$, a connected graph $G(\xi)=(\xi, E_{G(\xi)})$, in which each vertex has finite degree, is constructed. While its vertex set is $\xi$ itself, its edge set $E_{G(\xi)}$ is obtained using deterministic rules based on the geometry of $\xi$. For instance, we will consider \emph{Delaunay triangulations}, \emph{Gabriel graphs} or \emph{creek-crossing graphs} generated by point processes. Their constructions will be described below. We then consider continuous-time random walks on $G(\xi)$ and investigate their annealed (or averaged) scaling limits.

In the sequel,  we will write $\N$ (resp. $\N_0$) for the collection of all infinite locally finite subsets of $\RR^d$ (resp. infinite locally finite subsets of $\RR^d$ containing the origin). We denote by $\P$ (resp. $\P_0$)  the law of a stationary, ergodic and simple point process (resp. its Palm version). We recall that $\P_0(\N_0)=1$ (see \cite[\S 3.3]{SW}).  We denote by $\E$ (resp. $\E_0$) the expectation with respect to $\P$ (resp. $\P_0$).  We assume that $\P$-almost all $\xi$ is aperiodic, {\it i.e.\@\xspace}, with $\P$-probability 1:
\[\xi\neq \tau_x\xi,\,\forall x\in \RR^d,\]
where $\tau_x$ is the translation $\tau_x\xi=\sum_{y\in\xi}\delta_{y-x}$. This implies that the same holds for $\P_0$-almost all $\xiz$.

 For almost any realization of $\xi^0$, we define a variable speed random walk (VSRW) on $G(\xi^0)$ as follows. Let $D(\RR_+;\xi^0)$ be the space of c\`adl\`ag $\xi^0$-valued functions on $\RR_+$ endowed with the Skorohod topology, and $X^{\xi^0}_t=X^{\xi^0}(t)$, $t\geq 0$ be the coordinate map from $D(\RR_+;\xi^0)$ to $\xi^0$. For $x\in\xi^0$, we write $P^{\xi^0}_x$ for the probability measure on $\Omega_{\xi^0}:=D(\RR_+;\xi^0)$ under which the coordinate process $(X^{\xi^0}_t)_{t\geq 0}$ is the continuous-time Markov process starting from $X^{\xi^0}_0=x$ with infinitesimal generator given by:
\begin{equation}
\L^{\xi^0}f(y):=\sum_{z\in \xi^0}c^{\xi^0}_{y,z}\big(f(z)-f(y)\big),\, y\in \xi^0,\label{DefGeneChapAIP}
\end{equation}
where $c^{\xi^0}_{y,z}$ is equal to the indicator function of the event `$\{y,z\}\in E_{G(\xi^0)}$'.
We denote by $E^{\xi^0}_x$ the expectation with respect to $P^{\xi^0}_x$. Note that $P_x^\xiz$ can be seen as a probability measure on $\Omega:=D(\RR_+;\RR^d)$ concentrated on $\Omega_{\xi^0}$.

The behavior of $X^{\xi^0}_t$ under $P^{\xi^0}_x$ can be described as follows: starting from the point $x$, the particle waits for an exponential time of parameter $\operatorname{deg}_{G(\xi^0)}(x)$ and then chooses, uniformly at random, one of its neighbors in $G(\xi^0)$ and moves to it. This procedure is then iterated with independent  times (of parameter the degree of the current location in $G(\xi^0)$). The same arguments as in \cite[Appendix A]{FSS} show that these random walks are well-defined ({\it i.e.\@\xspace} that no explosion occurs) as soon as $\E_0[\deg_{G(\xi^0)}(0)]<\infty$.  

Let $P$ be the so-called annealed semi-direct product measure on $\N_0\times\Omega$:
\[P\big[F(\xi^0,X^{\xi^0}_\cdot )\big]=\int_{\N_0} P^{\xi^0}_0\big[F(\xi^0,X^{\xi^0}_\cdot )\big]\P_0(\d \xi^0).\]

Note that $(X^{\xi^0}_t)_{t\geq 0}$ is no longer Markovian under $P$. Using technics similar to those from \cite[Section 4]{DFGW} and \cite{FSS}, we will use the \emph{point of view of the particle} and \cite[Theorem 2.2]{DFGW} to obtain annealed invariance principles for random walks on specific examples of random geometric graphs with suitable underlying point processes. Let us describe the precise models.

\subsection{Conditions on the point process}\label{condprocAIP}

In all what follows we assume that $\P$ is the distribution of a simple, stationary point process $\xi$ satisfying $\E\big[\#(\xi\cap [0,1]^d)^8\big]<\infty$.
We also suppose that the second factorial moment measure of the point process $\xi$:
\[\Lambda^{(2)}(A_1\times A_2):=\E\left[\#\left\{(x_1,x_2): x_1\in\xi\cap A_1, x_2\in\xi\cap A_2\setminus\{x_1\}\right\}\right], \quad A_1,A_2\in \mathcal{B}(\RR^d),\]
is absolutely continuous, with bounded density, with respect to the $2d$-Lebesgue measure. We assume that $\P$ is aperiodic, isotropic ({\it i.e.\@\xspace} its distribution is invariant under rotations around the origin) and almost surely in general position ({\it i.e.\@\xspace} there are no $d+1$ points (resp. $d+2$ points) in a $(d-1)$-dimensional affine subspace (resp. in a sphere), see \cite{Zessin}). Using {\it e.g.\@\xspace} \cite[Lemma B.2]{CFP}, one can see that the Palm version $\P_0$ of $\P$ also satisfies these properties. It is moreover assumed that $\xi$ has a \emph{finite range of dependence $k$}, {\it i.e.\@\xspace}, for any disjoint Borel sets $A, B\subset \RR^d$ with $\mathrm{d}(A,B):=\inf\{\Vert x-y\Vert\,:\,x\in A,y\in B\}\geq k$, $\xi\cap A$ and $\xi\cap B$ are independent. In particular, the point process is ergodic. 

Finally, the following hypothesis on the void probabilities {\bf (V)} and the deviation probabilities {\bf (D)} are needed:

\begin{itemize}
\item[{\bf (V)}]
there exists a positive constant $\const{1}$ such that for $L$ large enough:
\begin{equation*}
\P\left[\#\left(\xi\cap [-L/2,L/2]^d\right)=0\right]\leq e^{-\const{1}L^d};
\end{equation*}
\item[{\bf (D)}] \label{condprocAIP2} 
there exist positive constants $\const{2},\const{3}$ such that for $L$ large enough:
\begin{equation*}
\P\left[\#\left(\xi \cap [-L/2,L/2]^d\right)\geq \const{2}L^d\right]\leq e^{-\const{3}L^d}.
\end{equation*}
\end{itemize}

Assumptions {\bf (V)} and {\bf (D)} imply the following analogue estimates for the Palm version of the point process (see Lemma \ref{LemmeUtilPoisson} (\ref{VimpliesV'}) and (\ref{DimpliesD'})):  
\begin{itemize}
\item[{\bf (V')}]
there exists a positive constant $\const{4}$ such that for $L$ large enough:
\begin{equation*}\label{MomentsExpoVoid}
\P_0\left[\#\left(\xiz\cap \left(x+[-L/2,L/2]^d\right)\right)=0\right]\leq e^{-\const{4}L^d},\quad \forall x\in \RR^d;
\end{equation*}
\item[{\bf (D')}] \label{condprocAIP3} 
there exist positive constants $\const{5},\const{6}$ such that for $L$ large enough:
\begin{equation*}
\P_0\left[\#\left(\xiz\cap \left(x+[-L/2,L/2]^d\right)\right)\geq \const{5}L^d\right]\leq e^{-\const{6}L^d},\quad \forall x\in \RR^d.
\end{equation*}
\end{itemize}

Let us note that, due to the stationarity of the point process and {\bf (D)}, we have:
\[\lim_{N\rightarrow \infty}N^8\P\big[\#\big(\xi\cap [-N,N]^d)\geq \const{7} N^d\big]=0 \mbox{ for some }\const{7}>0.\] 

These assumptions are in particular satisfied if $\xi$ is:
\begin{itemize}
\item a homogeneous Poisson point process (PPP),
\item a Mat\'ern cluster process (MCP),
\item a Mat\'ern hardcore process I or II (MHP I/II).
\end{itemize}

We refer to Appendix \ref{Expp} for an overview on each of these point processes. Note that these processes have different interaction properties: for PPPs there is no interaction between points, MCPs exhibit clustering effects whereas points in MHPs repel each other. 
\subsection{The graph structures}
Given a realization $\xi$ of the point process satisfying the previous assumptions, we write $$\mathrm{Vor}_\xi(\x):=\{x\in\RR^d\, :\,\Vert x-\x\Vert\leq\Vert x-\y\Vert,\forall\y\in\xi \}$$ for the Voronoi cell of $\x\in\xi$; $\x$ is called the nucleus or the seed of the cell. The Voronoi diagram of $\xi$ is the collection of the Voronoi cells. It tessellates $\RR^d$ into convex polyhedra. See \cite{Moller,NewPerspectives} for an overview on these tessellations.

The graphs considered in the sequel are:  
\begin{description}
\item[{\bf $\operatorname{DT}(\xi)$}] the \emph{Delaunay triangulation} of $\xi$. It is the dual graph of its Voronoi tiling. It has $\xi$ as vertex set and there is an edge between $\x$ and $\y$ in $\operatorname{DT}(\xi)$ if $\mathrm{Vor}_\xi(\x)$ and $\mathrm{Vor}_\xi(\y)$ share a $(d-1)$-dimensional face. Another useful characterization of $\operatorname{DT}(\xi)$ is the following: a simplex $\Delta$ is a cell of $\operatorname{DT}(\xi)$ \emph{iff} its circumscribed sphere has no point of $\xi$ in its interior. Note that this triangulation is well defined since $\xi$ is assumed to be in general position. 

These graphs are  widely used in many fields such as astrophysics \cite{Ramella}, cellular biology \cite{Poupon}, ecology \cite{Roque} and telecommunications \cite{BB3}. 
\item[ {\bf $\operatorname{G_n}(\xi)$, $\operatorname{n}\geq 2$}] the \emph{creek-crossing graphs} generated by $\xi$. These are sub-graphs of $\DT (\xi)$. Each $\operatorname{G_n}(\xi)$ has $\xi$ as vertex set and two vertices $\x,\y \in \xi$ are connected by an edge in $\operatorname{G_n}(\xi)$ \emph{iff} there do not exist an integer $k\leq \operatorname{n}$ and vertices $u_0=\x, \dots, u_k=\y\in\xi$ such that $\Vert u_i-u_{i+1}\Vert <\Vert \x-\y\Vert$ for all $i\in\{0, \dots, k-1\}$. Let us recall that  $\operatorname{G_2}(\xi)$ is also known as the \emph{relative neighborhood graph} generated by $\xi$ and that the intersection of the  $\operatorname{G_n}(\xi)$ (in the graph sense) is the Euclidean minimum spanning forest of $\xi$ (see \cite{HNS12,HNGS}).
\item[{\bf $\operatorname{Gab}(\xi)$}] the \emph{Gabriel graph} of $\xi$. Its vertex set is $\xi$ and there is an edge between $\x,\y\in \xi$ if the ball of diameter $[\x,\y]$ contains no point of $\xi$ in its interior. Note that $\operatorname{Gab}(\xi)$ is a subgraph of $\operatorname{DT}(\xi)$ and contains the Euclidean minimum spanning forest of $\xi$ and the relative neighborhood graph. It has for example applications in geography, routing strategies, biology or tumor growth analysis (see\cite{BBD,Gab1,Gab2}).
\end{description}

\subsection{Main results}
The aim of this paper is to prove the following theorem.
\begin{theo}\label{ThprincAIP}
Assume that $\P$ satisfies the conditions described in Subsection \ref{condprocAIP}. 

Let $(X^{\xi^0}_t)_{t\geq 0}$ be the random walk on $\DT (\xiz)$, $\Gab (\xiz)$ or $\operatorname{G_n} (\xiz), \,\operatorname{n}\geq 2$ with generator (\ref{DefGeneChapAIP}).

 Then, the rescaled process $\underline{Y}^{\xi^0,\varepsilon}:=(\varepsilon X^{\xi^0}_{\varepsilon^{-2}t})_{t\geq 0}$ converges weakly in $\P_0$-probability to a nondegenerate Brownian motion $\underline{W}^{\sigma^2\mathbf{I}_d}$ with:
\begin{equation} \label{EqSigma2}
 \sigma^2=\lim_{t\rightarrow\infty}\frac{1}{t}\E_0\big[E^{\xi^0}_0\big[(X^{\xi^0}_t\cdot e_1)^2\big]\big]>0,
 \end{equation}
 where $e_1$ is the first vector of the standard basis of $\RR^d$. In other words, for any bounded continuous function $F$ on the path space $D([0,,\infty),\RR^d)$ endowed with Skorohod topology
 \[\E_0^{\xiz}\left[F\left(\underline{Y}^{\xi^0,\varepsilon}\right)\right]\longrightarrow\mathbb{E} \left[F\left(\underline{W}^{\sigma^2\mathbf{I}_d}\right)\right]\quad\mbox{in }\P_0\mbox{-probability.}\]
 
 This is the case if the underlying point process is a PPP, a MCP or a MHP I/II.
\end{theo}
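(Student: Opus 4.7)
The plan is to apply the point-of-view-of-the-particle strategy of \cite{DFGW}, adapted to random point processes as in \cite{FSS}. Define the environment process $\eta_t:=\tau_{X^{\xi^0}_t}\xi^0$ on $\N_0$; it is Markovian with generator
\[
\widetilde{\L}f(\xi^0)=\sum_{z\in\xi^0}c^{\xi^0}_{0,z}\bigl(f(\tau_z\xi^0)-f(\xi^0)\bigr).
\]
The symmetry $c^{\xi^0}_{0,z}=c^{\tau_z\xi^0}_{-z,0}$ combined with the Campbell--Mecke formula shows that $\P_0$ is a reversible invariant measure for $\eta$, and ergodicity follows from the ergodicity of $\P$ under spatial translations together with the $\P_0$-a.s.\ connectedness of $\DT(\xi^0)$, $\Gab(\xi^0)$ and $\operatorname{G_n}(\xi^0)$. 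Decomposing the position as $X^{\xi^0}_t = N_t + \int_0^t b(\eta_s)\,\d s$, where $b(\xi^0):=\sum_{z}c^{\xi^0}_{0,z}z$ has zero $\P_0$-mean by isotropy and $N$ is a square-integrable $P$-martingale, Theorem~2.2 of \cite{DFGW} applied in this reversible ergodic framework yields the invariance principle in $\P_0$-probability with limiting covariance $\sigma^2\mathbf{I}_d$ (the scalar form being forced by isotropy of $\P$), provided the second-moment condition
\[
\E_0\Bigl[\sum_{z\in\xi^0}c^{\xi^0}_{0,z}\Vert z\Vert^2\Bigr]<\infty
\]
can be established.

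This moment bound reduces to controlling the tail of the maximal edge-length at the origin. For $\DT$, the existence of an edge $\{0,z\}$ with $\Vert z\Vert\geq r$ forces an empty circumscribed ball of diameter comparable to $r$; for $\Gab$, the diametral ball on $[0,z]$ must be empty; for $\operatorname{G_n}$, such an edge forces a large empty region via the defining chaining condition. Assumption \textbf{(V')} then produces in each case a tail of order $\exp(-c\,r^d)$ for the maximal incident edge length, while \textbf{(D')} together with $\E[\#(\xi\cap[0,1]^d)^8]<\infty$ controls $\deg_{G(\xi^0)}(0)$. A Cauchy--Schwarz argument on $\deg_{G(\xi^0)}(0)\cdot\max_{z\sim 0}\Vert z\Vert^2$ then gives the desired finiteness, and the same estimates yield the uniform integrability required to transfer the DFGW conclusion from annealed to $\P_0$-probability convergence.

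The main obstacle is proving nondegeneracy $\sigma^2>0$. I would employ a coarse-graining argument: partition $\RR^d$ into cubes of side $L\gg k$ and declare a cube \emph{good} when a slight enlargement satisfies quantitative versions of \textbf{(V)}, \textbf{(D)} and contains a short-edge connection between any two opposite faces. Goodness is a local event, so by \textbf{(V)}, \textbf{(D)} and the $k$-dependence of $\xi$, good cubes dominate a supercritical Bernoulli site percolation for $L$ large enough, and a Peierls-type argument produces infinite chains of good cubes crossing $\RR^d$ in the $e_1$ direction. A unit flow built through such chains gives a uniform quenched lower bound on the effective conductance between opposite faces of any large box; through the variational characterisation of the asymptotic variance for reversible additive functionals,
\[
\sigma^2=\inf_{g}\E_0\Bigl[\sum_{z\in\xi^0}c^{\xi^0}_{0,z}\bigl(z\cdot e_1+g(\tau_z\xi^0)-g(\xi^0)\bigr)^2\Bigr],
\]
this lower bound translates into $\sigma^2\geq c>0$. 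The three listed examples (PPP, MCP, MHP I/II) satisfy all hypotheses of Subsection~\ref{condprocAIP} by direct verification, so the theorem applies in all three cases.
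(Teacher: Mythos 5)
Your overall strategy coincides with the paper's: the environment seen from the particle, reversibility via a Campbell/Neveu exchange argument, ergodicity, the martingale-plus-drift decomposition feeding into \cite[Theorem 2.2]{DFGW}, exponential tails for $\max_{x\sim 0}\Vert x\Vert$ and $\deg_{G(\xi^0)}(0)$ from {\bf (V')} and {\bf (D')}, and nondegeneracy via good boxes, stochastic domination of supercritical site percolation, and an electrical-network lower bound plugged into the variational formula. Two points, however, are genuine gaps rather than compression. First, the hypothesis you isolate, $\E_0[\sum_z c^{\xi^0}_{0,z}\Vert z\Vert^2]<\infty$, is not sufficient to verify the DFGW hypotheses: establishing that the mean forward velocity $\varphi(\xi^0)=\sum_{x\sim 0}x$ is the strong $L^2(\P_0)$ limit of $t^{-1}\bE_{\cdot}[X_t]$ (and the analogous statement for the infinitesimal square displacement) requires controlling the probability of two or more jumps in time $t$, and the resulting Cauchy--Schwarz/H\"older chain consumes moments of $\deg_{G(\xi^0)}(0)$ up to order $6$ and of $\max_{x\sim 0}\Vert x\Vert$ up to order $8$. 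Your exponential tail bounds do supply all polynomial moments, so the estimate is available, but the reduction to a second-moment condition as stated is false and the actual verification is a substantial computation you have not performed.

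Second, and more seriously, the passage from a quenched lower bound on finite-box effective conductances to $\sigma^2>0$ via the variational formula is where most of the work lies, and you treat it as immediate. The infimum in the variational formula is over functions of the environment on $\N_0$, not over potentials on a finite box, so one cannot directly ``translate'' a conductance bound into it. The paper's route is: truncate the conductances at range $r_c$ (giving $D\geq\widehat{D}$), build periodized finite networks whose diffusion constants $D_N^\xi$ satisfy $\widehat{D}\geq(\limsup_N\E[D_N^\xi])\mathbf{I}_d$ --- a convergence statement whose proof needs ergodic averaging, control of boundary contributions, and exit-time estimates --- and only then identify $D_N^\xi$ with $8N^2\overline{\kappa}_N^\xi/\#\gV_N^\xi$ and bound the conductance below. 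Moreover, for that last bound the existence of crossings is not enough: one needs order $N^{d-1}$ \emph{disjoint} left-right crossings of total length $O(N^d)$ so that the parallel/series laws give $\overline{\kappa}_N^\xi\geq cN^{d-2}$, exactly compensating the factor $N^2/\#\gV_N^\xi\sim N^{2-d}$; a single infinite chain, or an unquantified family, yields nothing. Finally, for the creek-crossing graphs $\operatorname{G_n}$ the local connectivity needed to define good boxes (any two points of a sub-box joined by a $\operatorname{G_n}$-path staying in a slightly larger box) is itself a nontrivial input (Lemma 8 of \cite{HNGS}), not a consequence of {\bf (V)} and {\bf (D)} alone; the Gabriel and Delaunay cases then follow by the subgraph relation and Rayleigh monotonicity.
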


In the companion paper \cite{QIP}, we prove a quenched invariance principle in the particular case of the simple random walk on the Delaunay triangulation generated by suitable point processes (see also \cite{FGG} for the 2-dimensional case). The method used in \cite{QIP} relies notably on heat-kernel estimates and demands a precise control of regularity (isoperimetry, volume growth, ...) of the random environment. The main interest of the present paper is that it does not require such precise estimates. Consequently, it applies to a wider class of point processes and graphs. 

As mentioned above, we closely follow the method of \cite{FSS} and we want to apply the results of De Masi, Ferrari, Goldstein and Wick. A key idea of \cite{DFGW}, which also appears in the prior works by Kozlov \cite{Kozlov} and by Kipnis and Varadhan \cite{KV}, is to introduce an auxiliary Markov process: \emph{the environment seen from the particle}. Actually, thanks to the aperiodicity of the medium,  $(X^{\xi^0}_t)_{t\geq 0}$ can be reconstructed as a functional of the environment process. 

We will prove the following preliminary results stated in the general context of the introduction which imply Theorem \ref{ThprincAIP}. For $x,y\in\xiz$, we write $x\sim y$ if $\{x, y\}\in E_{G(\xi^0)}$. This definition depends on $\xi^0$ which is omitted from the notation.

\begin{prop}\label{PropCV}  Let $\P$ be a stationary aperiodic ergodic simple point process in $\RR^d$. Assume that $\max_{x\sim 0}\Vert x\Vert$ and $\deg_{G(\xiz)}(0)$ admit polynomial moments of any order under $\P_0$. 

 Then, the rescaled process $\underline{Y}^{\xi^0,\varepsilon}:=(\varepsilon X^{\xi^0}_{\varepsilon^{-2}t})_{t\geq 0}$ converges weakly in $\P_0$-probability to a Brownian motion $\underline{W}^{D}$ with covariance matrix $D$  characterized by equation (\ref{EqVaria1}).
\end{prop}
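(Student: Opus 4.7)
The strategy, following the approach of [FSS], is to apply Theorem~2.2 of [DFGW] via the \emph{environment seen from the particle} technique. First, I introduce the $\N_0$-valued process
\[\omega_t := \tau_{X^{\xi^0}_t}\xi^0,\qquad t\geq 0.\]
Since the rule $G(\cdot)$ and the jump rates $c^{\xi^0}_{y,z}$ depend only on the relative positions of the points, $(\omega_t)_{t\geq 0}$ is a Markov process on $\N_0$ whose generator acts on bounded measurable functions as
\[\mathbf{L}f(\omega)=\sum_{x\in\omega,\, x\sim 0}\bigl(f(\tau_x\omega)-f(\omega)\bigr).\]

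The next step is to check that $\P_0$ is a reversible (hence invariant) probability measure for $(\omega_t)$ and that the dynamics is ergodic under $\P_0$. Reversibility follows from the symmetry $c^{\omega}_{0,x}=c^{\omega}_{x,0}$ combined with the refined Campbell formula (mass-transport principle) for Palm distributions, which gives the detailed-balance identity for $\mathbf{L}$. For ergodicity, any bounded $\mathbf{L}$-harmonic function $f$ must satisfy $f(\tau_x\omega)=f(\omega)$ for every neighbor $x\sim 0$, $\P_0$-a.s.; iterating along paths of $G(\xi^0)$, which is connected by assumption, yields $f(\tau_x\omega)=f(\omega)$ for every $x\in\omega$. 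The aperiodicity hypothesis then promotes this to full translation-invariance on $\RR^d$, after which ergodicity of $\P$ forces $f$ to be $\P_0$-a.s. constant.

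The random walk itself is then written as the sum of a martingale and an additive functional of $(\omega_s)$: componentwise,
\[X^{\xi^0}_t\cdot e_i = M^{(i)}_t + \int_0^t b_i(\omega_s)\,\d s,\qquad b_i(\omega):=\sum_{x\sim 0}(x\cdot e_i),\]
where $(M^{(i)})_i$ is a square-integrable martingale with predictable covariation
\[\langle M^{(i)},M^{(j)}\rangle_t = \int_0^t a_{ij}(\omega_s)\,\d s,\qquad a_{ij}(\omega):=\sum_{x\sim 0}(x\cdot e_i)(x\cdot e_j).\]
Using the Cauchy--Schwarz bounds $|b_i(\omega)|\leq \deg_{G(\omega)}(0)\max_{x\sim 0}\Vert x\Vert$ and $|a_{ij}(\omega)|\leq \deg_{G(\omega)}(0)\max_{x\sim 0}\Vert x\Vert^2$, the hypothesis that $\deg_{G(\xiz)}(0)$ and $\max_{x\sim 0}\Vert x\Vert$ have polynomial moments of all orders under $\P_0$ delivers far more than the $L^2(\P_0)$ integrability required to invoke [DFGW, Theorem~2.2]. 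Applying that theorem yields the desired weak convergence of $\underline{Y}^{\xi^0,\varepsilon}$ in $\P_0$-probability to a Brownian motion with covariance matrix $D$ expressed in terms of $\E_0[a_{ij}]$ and of solutions to the Poisson equation $-\mathbf{L}\varphi_i = b_i$, in the form stated by (\ref{EqVaria1}).

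The delicate step is the ergodicity claim: aperiodicity must be exploited carefully, since on a purely lattice-like configuration one could easily construct nontrivial bounded functions invariant under $\mathbf{L}$ (for example parity-type functions along a sublattice), which would prevent ergodicity and hence the whole scheme. Everything else is an essentially mechanical verification once the moment hypotheses are in hand.
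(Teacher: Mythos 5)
Your overall strategy is exactly the paper's: pass to the environment seen from the particle, check reversibility and ergodicity under the annealed Palm measure, and invoke \cite[Theorem 2.2]{DFGW} with the local drift $\varphi(\xiz)=\sum_{x\sim 0}x$ and local quadratic form $\psi$. The reversibility argument via the exchange/mass-transport formula and the ergodicity argument via connectivity of $G(\xiz)$ and the equivalence between $\P_0$-invariance under point shifts and $\P$-invariance under translations (this is \cite[Lemma 1 (iii)]{FSS}, not really aperiodicity, whose role here is to make the reconstruction map $\Delta_s$ and hence $X_t(\underline{\xi^0})$ well defined) are both in line with the paper.

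The genuine gap is in the sentence ``the hypothesis \dots delivers far more than the $L^2(\P_0)$ integrability required'' together with ``everything else is an essentially mechanical verification.'' Two of the four hypotheses of \cite[Theorem 2.2]{DFGW} are \emph{not} reduced to integrability of $b_i$ and $a_{ij}$: one must show (i) that $X_{[s,t]}\in L^1(\bP)$, and above all (ii) that the mean forward velocity $\frac1t\bE_{\xiz}[X_t]$ converges to $\varphi(\xiz)$ as a \emph{strong} limit in $L^1(\N_0,\P_0)$ as $t\downarrow 0$. Because the jump rates $\deg_{G(\xiz)}(0)$ and the jump sizes $\max_{x\sim 0}\Vert x\Vert$ are unbounded, neither is automatic from the formal generator computation: the paper's Lemma \ref{lemmvelocity} splits on $\{n_*^{\xiz}(t)=1\}$ versus $\{n_*^{\xiz}(t)\geq 2\}$ and needs the quantitative bounds $\bP_{\xiz}[n_*^{\xiz}(t)\geq 2]\leq t^2\deg_{G(\xiz)}(0)\,\bEt_{\xiz}[\deg_{G(\xit_1)}(0)]$, stationarity of the discrete-time environment chain under $\Q_0$, and moments of the degree up to order $6$ and of $\max_{x\sim0}\Vert x\Vert$ up to order $8$ to make the error $o(t)$ in $L^2(\P_0)$; similarly Lemma \ref{lemmMoments} (the bound $\E_0[E_0^{\xiz}[\Vert X_t^{\xiz}\Vert^\gamma]]\leq \const{8}\sqrt t$) controls the full jump history, not just the first jump. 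Your polynomial-moment hypotheses are indeed strong enough for these estimates to go through, so the approach does not fail, but the step you flag as delicate (ergodicity) is a short known argument, whereas the step you dismiss as mechanical is where the actual analytic work of the proof lives and is not supplied by the $L^2$ bounds on $b_i$ and $a_{ij}$ alone.
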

\begin{rem}
\begin{enumerate}
\item If the underlying point process is isotropic, then $D=\sigma^2\mathbf{I}_d$ where $\sigma^2$ is given by (\ref{EqSigma2}). 
\item In the applications presented here $\max_{x\sim 0}\Vert x\Vert$ and $\deg_{G(\xiz)}(0)$ actually admit exponential moments as shown in \cite[\S 11]{QIP}.
\end{enumerate}
\end{rem}

Note that the diffusion matrix $D$ of $\underline{W}^{D}$ can be degenerate. We will show that, under additional assumptions, it is not. To this end, we will need to discretize the space $\RR^d$ and to consider  boxes of side $K\geq 1$: \[B_\z=B^K_\z:=K\z+\Big[-\frac{K}{2},\frac{K}{2}\Big]^d,\,\z\in \ZZ^d.\] 
We will also define so-called `\emph{good boxes}' in the same spirit as in \cite{RTCRWRG} which allow us to deduce `nice properties' of $G(\xi)$ from `nice properties' of a suitable percolation process on $\ZZ^d$. 

\begin{prop}\label{PropNonDege} 
Let $\P$ be the law of a point process satisfying the assumptions of Proposition \ref{PropCV}. Assume in addition that $\E\big[\#(\xi\cap [0,1]^d)^8\big]<\infty$ and that there exists $\const{7}>0$ such that:
\begin{equation}\label{ConProcCV}
\lim_{N\rightarrow \infty}N^8\P\big[\#\big(\xi\cap [-N,N]^d)\leq \const{7}N^d\big]=0.
\end{equation} 

If one can associate to $\mathcal{P}$-a.a.\@\xspace $\xi$ a subset of the boxes $\{B_\z,\,\z\in \ZZ^d\}$, called \emph{ ($\xi$-)good boxes}, such that:
\begin{enumerate}
\item \label{CondNonDege1} in each good box $B_\z$, one can choose a reference vertex $v_\z\in B_\z\cap \xi$,
\item \label{CondNonDege2} there exists $L$ such that to each pair of neighboring good boxes $B_{\z_1}$ and $B_{\z_2}$, one can associate a path in $G(\xi)$, $(v_{\z_1}, \dots,v_{\z_2})$, between the respective reference vertices $v_{\z_1}$ and $v_{\z_2}$ of these boxes satisfying:  
\begin{enumerate}
\item $(v_{\z_1},  \dots ,v_{\z_2})\subset B_{\z_1}\cup B_{\z_2}$,
\item the length of $(v_{\z_1}, \dots,v_{\z_2})$ in the graph distance is bounded by $L$,
\end{enumerate}
\item \label{CondNonDege3} if $\xi$ is distributed according to $\P$, the process $\XX=\{X_\z, \, \z\in\ZZ^d\}:=\{\mathbf{1}_{B_\z {\text{ is }\xi\text{-good}}}, \, \z\in\ZZ^d\}$ stochastically dominates an independent Bernoulli site percolation process on $\ZZ^d$ with parameter $p\geq p^\text{site}_c(\ZZ^2)$,
\end{enumerate}
 then the limiting Brownian motion $\underline{W}^{D}$ in Proposition \ref{PropCV} has a nondegenerate covariance matrix.
\end{prop}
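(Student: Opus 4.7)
The plan is to prove that $u\cdot D u>0$ for every unit vector $u\in\RR^d$ by combining the Dirichlet-form representation of $D$ obtained from the Kipnis--Varadhan / DFGW machinery underlying Proposition~\ref{PropCV} with a quenched lower bound on effective conductances produced by renormalizing the good-box structure. This follows the standard strategy used for the random conductance model on supercritical percolation clusters (see \cite{DFGW,BP,MP}), with good boxes playing the role of the infinite percolation cluster.

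\textbf{Step 1 (variational / conductance lower bound).} Proposition~\ref{PropCV} yields a representation of the type
\[u\cdot D u \;=\; \inf_{\varphi}\E_0\Bigl[\sum_{x\sim 0}\bigl(u\cdot x + \varphi(\tau_x\xiz) - \varphi(\xiz)\bigr)^2\Bigr],\]
the infimum being over bounded measurable $\varphi\colon\Nz\to\RR$. Thomson's principle, combined with a standard compactness/ergodic transfer to finite volumes \`a la \cite{MP}, produces a constant $c>0$ depending only on the intensity of $\xi$ such that
\[u\cdot D u \;\geq\; c\;\liminf_{N\to\infty}\frac{C_N^u(\xi)}{N^{d-2}}\quad\text{in }\P\text{-probability},\]
where $C_N^u(\xi)$ is the effective conductance, with unit edge-conductances, between the two faces orthogonal to $u$ of the box $B_N=[-N/2,N/2]^d$ in the subnetwork $G(\xi)\cap B_N$. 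The moment condition $\E\bigl[\#(\xi\cap[0,1]^d)^8\bigr]<\infty$ and the density bound (\ref{ConProcCV}) are invoked here in order to dominate boundary contributions and the number of edges.

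\textbf{Step 2 (crossings from supercritical good-box percolation).} Pick a $2$-plane $\Pi\subset\RR^d$ containing $u$ and decompose the box-lattice $\ZZ^d$ into $(N/K)^{d-2}$ two-dimensional slabs parallel to $\Pi\cap\ZZ^d$. In each slab, good boxes dominate a supercritical 2D Bernoulli site percolation by hypothesis (3), so the classical Grimmett--Marstrand crossing estimates (combined with the Liggett--Schonmann--Stacey comparison for finitely dependent fields, applicable thanks to the finite range $k$ of $\xi$) yield $\Theta(N/K)$ vertex-disjoint good-box paths of length $\Theta(N/K)$ crossing the $(N/K)\times(N/K)$ square in direction $u$. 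Taking all slabs together produces $\Theta((N/K)^{d-1})$ pairwise vertex-disjoint good-box paths in $B_N$ from one face orthogonal to $u$ to the opposite one.

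\textbf{Step 3 (lifting to $G(\xi)$ and parallel conductances).} By (1) and (2), each good-box path of length $m$ lifts to a $G(\xi)$-path of length at most $Lm$ joining the corresponding reference vertices, with each inter-box segment confined to the union of the two adjacent good boxes by (2)(a). Hence the $\Theta((N/K)^{d-1})$ lifted paths can be arranged pairwise edge-disjoint (the constant overlap is absorbed by keeping, say, only every other slab and every other good-box crossing). Sending a unit flow uniformly along them, Thomson's principle gives
\[C_N^u(\xi) \;\gtrsim\; \frac{\#\{\text{paths}\}}{\max\{\text{resistance per path}\}} \;\gtrsim\; \frac{(N/K)^{d-1}}{LN/K} \;=\; \frac{N^{d-2}}{LK^{d-2}},\]
which, combined with Step~1, yields $u\cdot D u \geq c'/(LK^{d-2})>0$, as required.

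\textbf{Main obstacle.} The delicate point is Step~1: transferring the abstract Palm variational formula into a quenched finite-volume conductance lower bound requires approximating an $L^2$-near-minimizer $\varphi$ by local functions, comparing the Palm Dirichlet energy with a box-averaged quenched Dirichlet energy, and concluding by an ergodic mean. The $L^8$-moment hypothesis and (\ref{ConProcCV}) enter precisely at this stage to control fluctuations of the local edge-count and to neutralize boundary leakage. Steps~2 and~3 are by contrast direct adaptations of the renormalization argument already developed in \cite{RTCRWRG}.
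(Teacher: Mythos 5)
Your overall strategy --- a variational/Dirichlet-form representation of $D$, reduction to finite-volume effective conductances, and a lower bound on those conductances via disjoint good-box crossings lifted to $G(\xi)$-paths --- is the same as the paper's. Steps 2 and 3 correspond closely to Subsection \ref{PositiveConductivity} (LR-crossings of $R_{2N,2(N-1)}$ in $2$-dimensional slices, lifting via conditions (\ref{CondNonDege1})--(\ref{CondNonDege2}), Rayleigh monotonicity plus series/parallel laws and Jensen giving $\overline{\kappa}_{KN}^\xi\geq c(KN)^{d-2}$ on $N$-good configurations), and they are essentially correct as sketched; the disjointness of the lifted paths already follows from condition (\ref{CondNonDege2})(a) and the vertex-disjointness of the percolation crossings, so no thinning of slabs is needed.

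The genuine gap is Step 1, which you rightly flag as the delicate point but then only assert. The inequality $u\cdot Du\geq c\,\liminf_N C_N^u(\xi)/N^{d-2}$ is where the paper spends most of Section \ref{Nondege} and all of Appendix A, and the mechanism you propose for it --- ``approximating an $L^2$-near-minimizer by local functions'' --- does not bound an infimum from below: without a priori control on the near-minimizer the argument is circular. What the paper actually does is: (i) cut off edges longer than $r_c$, which is monotone in the variational formula and gives $D\geq\widehat{D}$; (ii) build an explicit periodized finite network $\gGb_N^\xi$ with boundary vertices $\Gamma_N^\pm$ wired to the slabs $B_N^{\xi,\pm}$ with conductances $1/\#\Gamma_N$ (note that ``the two faces of the box'' are not vertices of $G(\xi)$, so this wiring must be specified) and apply the DFGW formula in finite volume to get $D_N^\xi=\gm_N^\xi(\psi_N^\xi)-2\int_0^\infty\langle\varphi_N^\xi,e^{t\L_N^\xi}\varphi_N^\xi\rangle\,\d t$; (iii) prove convergence of $\E[\gm_N^\xi(\psi_N^\xi)]$ and, for each fixed $t$, of $\E[\langle\varphi_N^\xi,e^{t\L_N^\xi}\varphi_N^\xi\rangle]$ to the corresponding Palm quantities --- this is precisely where the $L^8$ moment, hypothesis (\ref{ConProcCV}), the boundary-contribution Lemma \ref{LemmMeanBContribNegl} and the exit-time Lemma \ref{LemmeHittingTimes} enter --- and then use Fatou on the time integral, whose negative sign is what makes Fatou yield the needed direction $\widehat{D}\geq(\limsup_N\E[D_N^\xi])\mathbf{I}_d$; (iv) identify $D_N^\xi$ with $8N^2\overline{\kappa}_N^\xi/\#\gV_N^\xi$ via Dirichlet's principle. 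If you prefer your direct Dirichlet-form route, you must show that for each \emph{fixed} bounded $f$ the Palm energy of $f$ asymptotically dominates $c\,C_N^u(\xi)/N^{d-2}$ (ergodic/Campbell theorem for the local energy at $\tau_x\xi$ over $x\in\xi\cap C_{2N}$, plus a boundary correction making $\pi_1+f(\tau_\cdot\xi)$ admissible for the conductance problem), and only afterwards take the infimum over $f$; the order of quantifiers is essential, and none of this is a routine citation of \cite{MP}, which concerns a quenched corrector construction rather than this annealed comparison.
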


\subsection{Outline of the paper}
Sections \ref{Checkhyp} and \ref{Nondege} are respectively devoted to the proof of Propositions \ref{PropCV} and \ref{PropNonDege}. Theorem \ref{ThprincAIP} is derived from Propositions \ref{PropCV} and \ref{PropNonDege} in Section \ref{DerivTh1}. More precisely, arguments which allow to deduce polynomial moments for $\deg_{\DT(\xi^0)}(0)$ and $\max_{x\sim 0\text{ in }\DT(\xi^0)}\Vert x\Vert$ under $\P_0$ from the assumptions on the point process are given in Subsection \ref{AppMoments}. In Section \ref{SectGoodBoxes3}, we explain how to construct the `good boxes' of Proposition \ref{PropNonDege} in the case of creek-crossing graphs. The nondegeneracy of the limiting Brownian motion then follows in the other cases by Rayleigh monotonicity principle. Appendix \ref{Expp} contains the verification of the fact that PPPs, MCPs and MHPs satisfy the assumptions of Theorem \ref{ThprincAIP}.

\section{Proof of Proposition \ref{PropCV}}\label{Checkhyp}
We show how to use \cite[Theorem 2.2]{DFGW} in our framework. This result was used in the context of random walks in random conductance models (see \cite{DFGW84,DFGW}) and of random walks on complete graphs generated by point processes with jump rates depending on energy marks and point distances (see \cite{FSS}). It allows us to obtain the convergence of the rescaled process to a Brownian motion in Proposition \ref{PropCV} and gives an expression for the diffusion matrix of the limiting Brownian motion. The nondegeneracy of this matrix and the proof of Proposition \ref{PropNonDege} are postponed to Section \ref{Nondege}. For this section, we assume that the assumptions of Proposition \ref{PropCV} are satisfied.

\subsection{The point of view of the particle}\label{PointDeVueDeLaparticle} Let us denote by $(\tau_x)_{x\in \RR^d}$ the group of translations in $\RR^d$ which acts naturally on $\N$ as follows: $\tau_x\xi=\sum_{y\in\xi}\delta_{y-x}$. Note that, if $\xi^0\in\mathcal{N}_0$ and $x\in\xi^0$, then $\tau_x\xi^0\in\mathcal{N}_0$. The theory developed in \cite{KV,DFGW} makes extensive use of the process of the \emph{environment seen from the particle} $(\tau_{X^{\xi^0}_t}\xi^0)_{t\geq 0}$. Whereas the original random walk is defined on the probability space $(\Omega_{\xi^0},P^{\xi^0}_0)$, this auxiliary process is defined on $\Xi^0:=D(\RR_+;\N_0)$ and we write $\bP_{\xi^0}$ for its distribution. A generic element of $\Xi^0$ is denoted by $\underline{\xi^0}=(\xi^0_t)_{t\geq 0}$. The environment process can be seen from an `annealed' point of view, that is under the distribution $\bP:=\int\bP_{\xi^0}[\cdot]\P_0(\d \xi^0)$. In this setting, under the assumption of aperiodicity, this is the continuous-time Markov process with initial measure $\P_0$ and transition probabilities given by:
\begin{align*}
\bP\big[\xi^0_{s+t}=\xi'\big\vert \xi^0_s=\xi\big]&=\bP_\xi\big[\xi^0_t=\xi'\big]
:=\left\lbrace\begin{array}{ll}
P^\xi_0\big[X^\xi_t=x\big]&\mbox{if } \xi'=\tau_x\xi,\,x\in\xi\\
0 &\mbox{otherwise}
\end{array}\right. 
,\,\forall s,t\geq 0.
\end{align*}

Let us explain how the original walk can be reconstructed from the process $\underline{\xi^0}$. For $t\geq 0$, let $X_t: \Xi^0\longrightarrow \RR^d$ be the random variable defined by:
\begin{equation}
X_t(\underline{\xi^0}):=\sum_{\tiny
\begin{array}{c}
s\in [0,t]:\\ \Delta_s(\underline{\xi^0})\neq 0
\end{array}}\Delta_s(\underline{\xi^0}),\label{deffamcov}
\end{equation}  
where $\Delta_s(\underline{\xi^0})=y$ if there exists $y$ such that $\xi^0_s=\tau_y\xi^0_{s^-}$ and is 0 otherwise. Note that $\Delta_s$ is well defined for a.a.\@\xspace realization of the environment process since $\xi^0$ is almost surely aperiodic. The law of $(X_t(\underline{\xi^0}))_{t\geq 0}$ under $\bP_{\xi^0}$ is then equal to $P_0^{\xi^0}$, the distribution of the original random walk starting at $0$. In particular, the diffusion matrix can be calculated using the following equality:
\begin{equation*}
\E_0\big[E_0^{\xi^0}\big[(X_t^{\xi^0}\cdot a)^2\big]\big]=\bE\big[(X_t\cdot a)^2\big].
\end{equation*}

The family $\{X_{[s,t]}:=X_t-X_s\}_{t>s\geq 0}$ is an antisymmetric additive covariant family of random variables in the sense of \cite{DFGW}, and $X_t$ has c\`adl\`ag paths. Hence, to apply \cite[Theorem 2.2]{DFGW}, we only need to check that:
\begin{enumerate}
\item \label{hyp1} the environment process $(\xi^0_t)_{t\geq 0}$ is reversible and ergodic w.r.t.\@\xspace $\bP$;
\item \label{hyp2} the random variables $X_{[s,t]},\,t>s\geq 0$ are in $L^1(\Xi^0,\bP)$;
\item \label{hyp3} the mean forward velocity:
\[\varphi (\xi^0)=\lim_{t\downarrow 0}\frac{1}{t}\bE_{\xi^0}[X_t]\]
exists as a strong limit in $L^1(\N_0,\P_0)$; 
\item \label{hyp4} the martingale $M_t:=X_t-\int_0^t \varphi (\xi^0_s)\d s$ is square integrable under $\bP$.
\end{enumerate} 

The rest of this section is devoted to the verification of these hypotheses.
\subsection{Underlying discrete-time Markov chains and description of the dynamics}\label{ChaineIncluse}
It is convenient to decompose the dynamics in terms of the jump chain and the jump times as described in this subsection.
Given $\xi^0\in \N_0$, let us write $\widetilde{\Omega}_{\xi^0}:=(\xi^0)^\NN$ and denote by $(\widetilde{X}^{\xi^0}_n)_{n\in \NN}$ a generic element of $\widetilde{\Omega}_{\xi^0}$. For $x\in \xi^0$, let $\widetilde{P}_x^{\xi^0}$ be the distribution on $\widetilde{\Omega}_{\xi^0}$ of the discrete-time simple nearest neighbor random walk on $G(\xi^0)$. We denote by $\widetilde{E}_x^{\xi^0}$ the corresponding expectation. On another probability space $(\Theta , \bQ)$, let $T^{\xi^0}_{n,x},\,x\in\xi^0,\,n\in \NN$ be independent exponential random variables with mean $1/\operatorname{deg}_{G(\xi^0)}(x)$, and define on $(\widetilde{\Omega}_{\xi^0}\times \Theta,\widetilde{P}_x^{\xi^0}\otimes\bQ)$ the random variables:
\[R^{\xi^0}_0:=0;\,R^{\xi^0}_n:=\sum_{i=0}^{n-1}T^{\xi^0}_{i,\widetilde{X}^{\xi^0}_i}, \,n\geq 1;\]
\[n^{\xi^0}_*(t):=n\mbox{ where }R^{\xi^0}_n\leq t<R^{\xi^0}_{n+1}.\]
These random variables can be thought respectively as the instant of $n$th jump and the number of jumps up to time $t$ for the continuous-time random walk with environment $\xi^0$ starting at the origin.
The same arguments as in \cite[Appendix A]{FSS} show that almost surely no explosion occurs, {\it i.e.}
\begin{equation*}\label{EqWellDefined}
\widetilde{P}^{\xi^0}_x\otimes \bQ \big[\lim_{n\uparrow \infty}R^{\xi^0}_n=\infty\big]=1.
\end{equation*}
In particular, for a.a.\@\xspace $\xi^0$, $n^{\xi^0}_*(t)$ is well defined for all $t\geq 0$ and $(\widetilde{X}^{\xi^0}_{n^{\xi^0}_*(t)})_{t\geq 0}$ is a jump process which satisfies the same dynamics (under $\widetilde{P}^{\xi^0}_x\otimes \bQ$) as $(X^{\xi^0}_t)_{t\geq 0}$ (under $P^{\xi^0}_x$): starting from the point $x$, the particle waits for an exponential time of parameter $\operatorname{deg}_{G(\xi^0)}(x)$, chooses uniformly  one of its neighbors in $G(\xi^0)$ and moves to it. 

It is useful to introduce the probability measure $\Q_0$ on $\N_0$ given by:
\begin{equation*}
\Q_0(\d \xi^0)=\frac{\operatorname{deg}_{G(\xi^0)}(0)}{\E_0\big[\operatorname{deg}_{G(\xi^0)}(0)\big]}\P_0(\d \xi^0),
\end{equation*}
and $\widetilde{P}:=\int \widetilde{P}_0^{\xi^0}\Q_0 (\d \xi^0)$, the annealed law of the (original) discrete-time random walk. In the same way as in Subsection \ref{Reversibility}, one can obtain reversibility and ergodicity of the discrete-time process under $\widetilde{P}$. Let $\widetilde{E}$ denote the expectation under $\widetilde{P}$.  

One can also consider the discrete-time version of the environment process $(\widetilde{\xi}^0_n)_{n\in\NN}:=(\tau_{\widetilde{X}^{\xi^0}_n}\xi^0)_{n\in\NN}$ and write $\widetilde{\bP}_{\xi^0}$ for its distribution on the path space $\widetilde{\Xi}^0:=\N_0^\NN$. When $\xi^0$ is aperiodic the same identification as in Subsection \ref{PointDeVueDeLaparticle} is possible. It is easy to see that the processes $(\operatorname{deg}_{G(\xi^0)}(\widetilde{X}^{\xi^0}_n))_{n\in\NN}$ on $(\widetilde{\Omega}_{\xi^0},\widetilde{P}_0^{\xi^0})$ and $(\operatorname{deg}_{G(\widetilde{\xi}^0_n)}(0))_{n\in\NN}$ on $(\widetilde{\Xi}^0,\widetilde{\bP}_{\xi^0})$ have the same law. One can also verify that the process $(Y_n)_{n\in \NN}$ defined by $Y_0=0$ and:
\[Y_n:=\sum_{i=0}^{n-1}\Delta(\widetilde{\xi}^0_i,\widetilde{\xi}^0_{i+1}),\]
with $\Delta(\widetilde{\xi}^0_i,\widetilde{\xi}^0_{i+1})$ defined in the same way as in (\ref{deffamcov}), has distribution $P^{\xi^0}_0$ on $\widetilde{\Omega}_{\xi^0}$. If there does not exist $x\in\xi$ such that $\xi'=\tau_x\xi$, we set $\Delta (\xi,\xi'):=0$.

\begin{rem}
By slight modifications of the arguments given in Subsection \ref{Reversibility}, one can obtain reversibility and ergodicity of $(\widetilde{\xi}^0_n)_{n\in\NN}$ under $\widetilde{\bP}:=\int \widetilde{\bP}_{\xi^0}\Q_0 (\d \xi^0)$.  This can be used to derive an annealed invariance principle for the discrete-time random walk thanks to \cite[Theorem 2.1]{DFGW}.  
\end{rem}

\subsection{Reversibility and ergodicity}\label{Reversibility}
\subsubsection{Reversibility} We want to show that the environment process $(\xi_t^0)_{t\geq 0}$ is reversible with respect to $\bP$ and we follow the arguments of the proof of \cite[Proposition 2]{FSS}. Using the description of the dynamics given in Subsection \ref{ChaineIncluse}, one can check that, for any $n\in\NN$ and any $\xi_0=\xiz, \xi_1,\dots,\xi_n=\xiz ' \in\N_0$:
\[\bP_\xiz\Big[n_*^\xiz(t)=n, \xit^0_{R_1^\xiz}=\xi_1,\dots,\xit^0_{R_n^\xiz}=\xi_n\Big]=\bP_{\xiz'}\Big[n_*^\xiz(t)=n, \xit^0_{R_1^\xiz}=\xi_{n-1},\dots,\xit^0_{R_n^\xiz}=\xi_0\Big],\]
and deduce that:
\[\bP_\xiz\big[\xi^0_t=\xiz'\big]=\bP_{\xiz'}\big[\xi^0_t=\xiz\big],\quad t>0.\]
Hence, for any positive measurable functions $f,g$ and any $t>0$, thanks to Neveu's exchange formula (see \cite[Theorem 3.4.5]{SW}), we obtain:
\begin{align*}
\bE\big[f(\xi^0_0)g(\xi^0_t)\big]&=\int_{\N_0}\sum_{x\in\xiz}f(\xi^0)g(\tau_x\xi^0)\bP_\xiz\big[\xi^0_t=\tau_x\xiz\big]\P_0(\d\xiz)\\
&=\int_{\N_0}\sum_{x\in\xiz}f(\tau_{x}\xi^0)g(\xi^0)\bP_{\tau_{x}\xiz}\big[\xi^0_t=\xiz\big]\P_0(\d\xiz)\\
&=\int_{\N_0}\sum_{x\in\xiz}f(\tau_{x}\xi^0)g(\xi^0)\bP_{\xiz}\big[\xi^0_t=\tau_x\xiz\big]\P_0(\d\xiz)\\
&=\bE\big[f(\xi^0_t)g(\xi^0_0)\big].
\end{align*}
The reversibility of $(\xi_t^0)_{t\geq 0}$ then follows.
\subsubsection{Ergodicity} We follow the proof of \cite[Proposition 2]{FSS}. Note that it suffices to ascertain that for any measurable $A\subset \N_0$ satisfying $\bP_{\xi^0}[\xi^0_t\in A]=\mathbf{1}_A(\xi^0)$ for a.a.\@\xspace $\xi^0$, we have $\P_0(A)=0\mbox{ or }1$. Given such an $A$, there is a subset $B\subset A$ such that $\P_0[A\setminus B]=0$ and $\bP_{\xi^0}[\xi^0_t\in A]=1$ for all $\xi^0\in B$. For $x\in\xi^0$, $\xiz\in B$, one then has $\bP_{\xi^0}[\xi^0_t=\tau_x\xi^0\in A]=\bP_{\xi^0}[\xi^0_t=\tau_x\xi^0]>0$ by the positivity of the jump rates. Thanks to \cite[Lemma 1 (iii)]{FSS}, $\P_0(A)=0\mbox{ or }1$ which completes the proof.

\subsection{Integrability of $\mathbf{X_{[s,t]}=X_t-X_s,\, t>s\geq 0}$}\label{Integrability} This is an immediate consequence of:
\begin{lemm}\label{lemmMoments}
Let $\gamma\in\NN^*=\{1, 2, \dots\}$ and $ t_0>0$. There exists a constant $0< \const{8}=\const{8}(\gamma,t_0)<\infty$ such that:
\begin{equation}\label{EqRemIntegrability}
\E_0\big[E_0^{\xi^0}\big[\Vert X^{\xi^0}_t\Vert^\gamma\big]\big]\leq \const{8}\sqrt{t}, \quad\mbox{for all }t\leq t_0.
\end{equation}
\end{lemm}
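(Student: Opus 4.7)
The strategy is to dominate $\|X_t^{\xi^0}\|$ by the total arc-length traveled on $[0,t]$ and to control this quantity via Dynkin's formula for the environment process, together with the invariance of $\P_0$ and the polynomial moment hypotheses. Using the identification in Subsection~\ref{PointDeVueDeLaparticle},
\[
\E_0\bigl[E_0^{\xi^0}[\|X_t^{\xi^0}\|^\gamma]\bigr]=\bE[\|X_t\|^\gamma]\leq\bE[L_t^\gamma],\qquad L_t:=\sum_{0<s\leq t}\|\Delta_s\|,
\]
so it suffices to establish that for every $\gamma\in\NN^\ast$ there exists $C_\gamma<\infty$ with $\bE[L_t^\gamma]\leq C_\gamma\,t$ for all $t\leq t_0$; then (\ref{EqRemIntegrability}) follows from $t\leq\sqrt{t_0}\sqrt{t}$.

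Under $\bP$, the pair $(\xi^0_s,L_s)$ is Markovian and jumps from $(\xi,L)$ to $(\tau_y\xi,L+\|y\|)$ at rate $1$ for each $y\sim 0$ in $\xi$. Applying Dynkin's formula to $f(L_t)=L_t^\gamma$, expanding by the binomial theorem, and using that $\P_0$ is invariant for the environment process (Subsection~\ref{Reversibility}),
\[
\bE[L_t^\gamma]=\sum_{k=1}^{\gamma}\binom{\gamma}{k}\int_0^t\bE\bigl[L_s^{\gamma-k}\,\Phi_k(\xi^0_s)\bigr]\,ds,\qquad \Phi_k(\xi):=\sum_{y\sim 0\text{ in }\xi}\|y\|^k.
\]
Since $\Phi_k(\xi)\leq\deg_{G(\xi)}(0)\cdot\max_{y\sim 0}\|y\|^k$, Cauchy--Schwarz combined with the hypothesis that $\max_{x\sim 0}\|x\|$ and $\deg_{G(\xiz)}(0)$ admit polynomial moments of every order under $\P_0$ gives $\E_0[\Phi_k^p]<\infty$ for every $p\geq 1$. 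The term $k=\gamma$ contributes exactly $\E_0[\Phi_\gamma]\,t$. For $1\leq k\leq\gamma-1$, Hölder's inequality with conjugate exponents $\gamma/(\gamma-k)$ and $\gamma/k$ yields
\[
\bE\bigl[L_s^{\gamma-k}\,\Phi_k(\xi^0_s)\bigr]\leq\bE[L_s^\gamma]^{1-k/\gamma}\cdot\E_0[\Phi_k^{\gamma/k}]^{k/\gamma}.
\]

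Setting $u(t):=\bE[L_t^\gamma]$, these estimates combine into a Volterra-type inequality
\[
u(t)\leq A_\gamma t+\sum_{k=1}^{\gamma-1}B_{\gamma,k}\int_0^t u(s)^{1-k/\gamma}\,ds,\qquad 0\leq t\leq t_0,
\]
with finite constants $A_\gamma,B_{\gamma,k}$. The main work lies in closing this inequality: testing the ansatz $u(s)\leq A s$ and using $\int_0^t s^{1-k/\gamma}\,ds\leq t_0^{1-k/\gamma}\,t/(2-k/\gamma)$, the right-hand side is bounded by $(A_\gamma+\sum_{k}B_{\gamma,k}'A^{1-k/\gamma})\,t$, which is $\leq A\,t$ for $A=C_\gamma$ large enough since $1-k/\gamma<1$. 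A standard continuity argument then propagates $u(t)\leq C_\gamma t$ from $u(0)=0$ across $[0,t_0]$, provided $u$ is known a priori to be finite; this preliminary step can be secured by dominating $L_t\leq\sum_{n<N_t}V(\xit^0_n)$ with $V(\xi):=\max_{y\sim 0}\|y\|$ and controlling the moments of the jump count $N_t$ via a Burkholder--Davis--Gundy bound, its compensator being $\int_0^t\deg_{G(\xi^0_s)}(0)\,ds$ which has finite annealed moments of any order by invariance of $\P_0$. The main obstacle is thus the careful bookkeeping in the Volterra closure, the individual ingredients being routine consequences of the stated hypotheses.
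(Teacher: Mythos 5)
Your argument is correct in outline, but it takes a genuinely different route from the paper. The paper gives no self-contained proof: it simply instructs the reader to redo the computation of Proposition~1 of \cite{FSS} with $p=q=2$, i.e.\ to bound $\Vert X_t^{\xi^0}\Vert$ by the sum of jump lengths along the embedded discrete chain, separate the number of jumps $n_*^{\xi^0}(t)$ from the jump sizes by H\"older, and extract the factor $\sqrt{t}$ from a Cauchy--Schwarz step against the event $\{n_*^{\xi^0}(t)\geq 1\}$, whose probability is $\bE[1-e^{-\deg_{G(\xi^0)}(0)t}]\leq t\,\E_0[\deg_{G(\xi^0)}(0)]$. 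You instead work with the generator of the pair $(\xi^0_s,L_s)$, derive a Volterra inequality for $u(t)=\bE[L_t^\gamma]$ via Dynkin's formula and the invariance of $\P_0$, and close it by a bootstrap; this is sound (the exponents $1-k/\gamma<1$ do make the closure work, and $u$ is continuous and vanishes at $0$, so the propagation argument goes through) and it actually yields the sharper bound $\bE[L_t^\gamma]\leq C_\gamma t$, of which the stated $\const{8}\sqrt{t}$ is a weakening. The price is that the two technical steps you defer --- the localization needed to justify Dynkin's formula for the unbounded function $L\mapsto L^\gamma$, and the a priori finiteness of $\bE[L_t^\gamma]$ via moments of the jump count $N_t$ --- are essentially the content of \cite[Proposition~1 and Appendix~A]{FSS} that the paper is invoking; in particular, controlling $\bE\big[\big(\sum_{n<N_t}V(\widetilde{\xi}^0_n)\big)^\gamma\big]$ requires passing between the annealed continuous-time law and the degree-biased stationary law $\Q_0$ of the discrete-time environment chain, a point your sketch glosses over. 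So your proof is a valid (and quantitatively stronger) alternative, provided those preliminary moment bounds are written out.
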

To obtain this lemma the reader should redo the calculations of the proof of \cite[Proposition 1]{FSS} with $p=q=2$ using that $\deg_{G(\xi^0)}(0)$ and $\max_{x\sim 0}\Vert x\Vert$ admit polynomial moments and make the dependence on $t$ apparent. 
\subsection{Existence of the mean forward velocity}\label{SectMeanVelocity}
We want to verify that the mean forward velocity:
\[\lim_{t\downarrow 0}\frac{1}{t}\bE_{\xi^0}[X_t]\]
exists as a strong limit in $L^1$. Actually, we prove the following stronger result which is also useful for the square integrability of the martingale $M_t$ in Subsection \ref{MartL2}:
\begin{lemm}\label{lemmvelocity} 
Let $\varphi: \N_0\longrightarrow \RR^d$ defined by:
\begin{equation}\label{defphi}
\varphi(\xiz):=\sum_{x\sim 0} x.
\end{equation}
Then,
\begin{equation*}
\frac{1}{t}\bE_{\cdot}[X_t]\xrightarrow[t\rightarrow 0]{\, L^2(\N_0,\P_0)\,}
  \varphi.
\end{equation*}
\end{lemm}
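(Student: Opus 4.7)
The strategy is to identify $\varphi$ as the mean drift of $X_t$ along the environment process and then exploit the stationarity and reversibility of $\bP$ established in Subsection \ref{Reversibility}. First, observe that $\varphi\in L^2(\N_0,\P_0)$ (in fact in every $L^p$): since $\|\varphi(\xiz)\|\leq\deg_{G(\xiz)}(0)\cdot\max_{x\sim 0}\|x\|$, and both factors admit polynomial moments of all orders under $\P_0$ by the assumptions of Proposition \ref{PropCV}, the Cauchy--Schwarz inequality gives $\E_0[\|\varphi\|^p]<\infty$ for every $p\geq 1$.

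The key step is to establish the compensator formula
\[
\bE_{\xiz}[X_t]=\int_0^t \bE_{\xiz}[\varphi(\xi^0_s)]\,\d s\qquad\text{for }\P_0\text{-a.a.\@ }\xiz.
\]
This follows from the pure-jump description in Subsection \ref{PointDeVueDeLaparticle}: from environment $\xi$, the position jumps by $x$ at rate $c^\xi_{0,x}=\mathbf{1}_{x\sim 0}$, so the compensator of $X_t$ with respect to the filtration of $(\xi^0_s)_{s\geq 0}$ equals $\int_0^t\varphi(\xi^0_s)\,\d s$. Lemma \ref{lemmMoments} ensures $X_t$ is $\bE_{\xiz}$-integrable for $\P_0$-a.a.\@ $\xiz$, while the stationarity of $\bP$ handles the integrability of the drift integral; together these upgrade the local martingale $M_t:=X_t-\int_0^t\varphi(\xi^0_s)\,\d s$ to a true martingale, and taking expectations gives the identity.

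Applying Jensen's inequality twice and then integrating over $\P_0$ using stationarity yields
\[
\E_0\!\Big[\Big\|\frac{1}{t}\bE_{\xiz}[X_t]-\varphi(\xiz)\Big\|^2\Big]\leq \frac{1}{t}\int_0^t \bE\!\big[\|\varphi(\xi^0_s)-\varphi(\xi^0_0)\|^2\big]\,\d s.
\]
Since $\varphi(\xi^0_s)=\varphi(\xi^0_0)$ on $\{n^{\xiz}_*(s)=0\}$, the integrand is dominated by $2\,\bE\big[\mathbf{1}_{\{n^{\xiz}_*(s)\geq 1\}}\big(\|\varphi(\xi^0_s)\|^2+\|\varphi(\xi^0_0)\|^2\big)\big]$. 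Reversibility of $\bP$ makes the event $\{n^{\xiz}_*(s)\geq 1\}=\{\xi^0_s\neq\xi^0_0\}$ symmetric in $\xi^0_0$ and $\xi^0_s$, so both summands have the same $\bE$-expectation; combining the estimate $\bP[n^{\xiz}_*(s)\geq 1]\leq s\,\E_0[\deg_{G(\xiz)}(0)]$ with the Cauchy--Schwarz inequality and the fact that $\varphi\in L^4(\P_0)$ produces an $O(\sqrt{s})$ bound for the integrand, hence convergence to zero at rate $O(\sqrt{t})$.

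The main obstacle is the rigorous justification of the compensator identity: its heuristic is transparent in the DFGW framework, but one must genuinely verify the integrability conditions that promote the local martingale $M_t$ to a true martingale. This is the only step where the polynomial-moment assumption of Proposition \ref{PropCV}, via Lemma \ref{lemmMoments}, is indispensable.
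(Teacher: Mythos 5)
Your proof is correct, but it follows a genuinely different route from the paper's. The paper never invokes the martingale/compensator structure at this stage: it splits $\bE_{\xiz}[X_t]$ according to the number of jumps, computes $\bE_{\xiz}\big[X_t\mathbf{1}_{n^{\xiz}_*(t)=1}\big]$ explicitly from the jump-chain description of Subsection \ref{ChaineIncluse}, observes that $\mathbf{1}_{x\sim 0}t-\bP_{\xiz}\big[\xi^0_t=\tau_x\xiz,\,n^{\xiz}_*(t)=1\big]\geq 0$ so that Cauchy--Schwarz can be applied term by term, and controls the remainder $\bE_{\xiz}\big[X_t\mathbf{1}_{n^{\xiz}_*(t)\geq 2}\big]$ via H\"older together with Lemma \ref{lemmMoments}; everything is then closed by explicit moment bounds on $\deg_{G(\xiz)}(0)$ and $\max_{x\sim 0}\Vert x\Vert$. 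Your argument instead reduces the whole lemma to the $L^2$-continuity at $s=0$ of $s\mapsto\varphi(\xi^0_s)$ through the identity $\bE_{\xiz}[X_t]=\int_0^t\bE_{\xiz}[\varphi(\xi^0_s)]\,\d s$ followed by Jensen, which is shorter and more structural; the price is that this identity must be justified independently, as the compensator formula for a non-explosive pure-jump Markov process whose jump measure is integrable (your appeal to stationarity of $\bP$ to get $\bE\big[\int_0^t\sum_{x\sim 0}\Vert x\Vert\,\d s\big]<\infty$ does this), and you should make explicit that there is no circularity: in the DFGW scheme the martingale $M_t$ is introduced only \emph{after} the mean forward velocity exists, so you are using the elementary compensator property of the jump process, not the conclusion of \cite[Theorem 2.2]{DFGW}. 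Two small points. First, the events $\{n^{\xiz}_*(s)\geq 1\}$ and $\{\xi^0_s\neq\xi^0_0\}$ are not equal (the walk may jump and return to the origin), but this is harmless since $\{\xi^0_s\neq\xi^0_0\}\subset\{n^{\xiz}_*(s)\geq 1\}$ and the argument runs with either event. Second, the symmetrization of the two summands does not even require reversibility; stationarity suffices, because on $\{n^{\xiz}_*(s)=0\}$ one has $\xi^0_s=\xi^0_0$ and hence
\begin{equation*}
\bE\big[\mathbf{1}_{n^{\xiz}_*(s)\geq 1}\Vert\varphi(\xi^0_s)\Vert^2\big]=\bE\big[\Vert\varphi(\xi^0_s)\Vert^2\big]-\bE\big[\mathbf{1}_{n^{\xiz}_*(s)=0}\Vert\varphi(\xi^0_0)\Vert^2\big]=\bE\big[\mathbf{1}_{n^{\xiz}_*(s)\geq 1}\Vert\varphi(\xi^0_0)\Vert^2\big].
\end{equation*}
Both routes consume comparable moments of $\deg_{G(\xiz)}(0)$ and $\max_{x\sim 0}\Vert x\Vert$ and yield the same $O(\sqrt{t})$ rate, so nothing is lost quantitatively by your shortcut.
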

\begin{dem}  As in Subsection \ref{ChaineIncluse}, we denote by $n^{\xi^0}_*(t)$ the number of jumps of the environment process starting at $\xiz$ up to time $t$. First, we use that $X_t\mathbf{1}_{n^{\xi^0}_*(t)=0}=X_0=0$ and $(a+b)^2\leq 2(a^2+b^2)$ to write:
\begin{align}
\frac{1}{t^2}\int_{\N_0}\big\Vert &\bE_\xiz\big[X_t\big]-t\varphi (\xiz)\big\Vert^2\P_0(\d \xiz)\nonumber\\
=&\frac{1}{t^2}\int_{\N_0}\big\Vert \bE_\xiz\big[X_t\mathbf{1}_{n^{\xi^0}_*(t)=1}\big]-t\varphi (\xiz)+\bE_\xiz\big[X_t\mathbf{1}_{n^{\xi^0}_*(t)\geq 2}\big]\big\Vert^2\P_0(\d \xiz)\nonumber\\
\leq &\frac{2}{t^2}\int_{\N_0}\big\Vert \bE_\xiz\big[X_t\mathbf{1}_{n^{\xi^0}_*(t)=1}\big]-t\varphi (\xiz)\big\Vert^2\P_0(\d \xiz)+\frac{2}{t^2}\int_{\N_0}\big\Vert \bE_\xiz\big[X_t\mathbf{1}_{n^{\xi^0}_*(t)\geq 2}\big]\big\Vert^2\P_0(\d \xiz).\label{EqVelo2}
\end{align}
We need to show that the two summands in (\ref{EqVelo2}) go to 0 with $t$. We first deal with the first term in the r.h.s.\@\xspace of (\ref{EqVelo2}).

Note that, with the notation of Subsection \ref{ChaineIncluse}, for $x\in\xiz$:
\begin{align*}
\bP_\xiz\big[\xi^0_t=\tau_x\xiz,n_*^\xiz(t)=1\big]&\leq\Pt_0^\xiz\otimes\bQ\big[\X_1^\xiz=x ,\,T_{0,0}^\xiz\leq t\big]\nonumber\\
&=\Pt_0^\xiz\big[\X_1^\xiz =x\big]\big(1-e^{-\deg_{G(\xiz)}(0)t}\big)\nonumber\\
&\leq \frac{\mathbf{1}_{x\sim 0}}{\deg_{G(\xiz)}(0)}\deg_{G(\xiz)}(0)t=\mathbf{1}_{x\sim 0}t,
\end{align*}
where we used that $1-e^{-u}\leq u$. Thus, $\mathbf{1}_{x\sim 0}t-\bP_\xiz\big[\xi^0_t=\tau_x\xiz,n_*^\xiz(t)=1\big]$ is nonnegative. 

It follows from the Cauchy-Schwarz inequality that:
\begin{align}\label{EqVelo4}
\E_0\big[\big\Vert\bE_\xiz\big[X_t\mathbf{1}_{n_*^\xiz=1}\big]-t\varphi (\xiz)\big\Vert^2\big]
&=\E_0\Big[\Big\Vert\sum_{x\in\xiz}x\big(\bP_\xiz\big[\xi^0_t=\tau_x\xiz,n_*^\xiz(t)=1\big]-\mathbf{1}_{x\sim 0}t\big)\Big\Vert^2\Big]\nonumber\\
&\leq \E_0\big[S_1(\xiz)S_2(\xiz)\big],
\end{align}
with
\[S_1(\xiz):=\sum_{x\in\xiz}\big(\mathbf{1}_{x\sim 0}t-\bP_\xiz\big[\xi^0_t=\tau_x\xiz,n_*^\xiz(t)=1\big]\big),\]
and
\[S_2(\xiz):=\sum_{x\in\xiz}\Vert x\Vert^2\big(\mathbf{1}_{x\sim 0}t-\bP_\xiz\big[\xi^0_t=\tau_x\xiz,n_*^\xiz(t)=1\big]\big).\]

Let us observe that 
\begin{align}\label{EqVelo5}
S_1(\xiz)&=\deg_{G(\xiz)}(0)t-\bP_\xiz\big[n_*^\xiz(t)=1\big]\nonumber\\
&=\deg_{G(\xiz)}(0)t-\bP_\xiz\big[n_*^\xiz(t)\geq 1\big]+\bP_\xiz\big[n_*^\xiz(t)\geq 2\big]\nonumber\\
&=\deg_{G(\xiz)}(0)t-1+e^{-\deg_{G(\xiz)}(0)t}+\bP_\xiz\big[n_*^\xiz(t)\geq 2\big]\nonumber\\
&\leq\deg_{G(\xiz)}(0)^2t^2+\bP_\xiz\big[n_*^\xiz(t)\geq 2\big],
\end{align}
where we used that for $u\geq 0$, $0\leq -1+e^{-u}+u\leq u^2$. Moreover:
\begin{align}\label{EqVelo6}
\bP_\xiz\big[n_*^\xiz(t)\geq 2\big]&\leq \bP_0^\xiz\otimes\bQ\big[T_{0,0}^\xiz\leq t,\,T_{1,\X_1^\xiz}^\xiz\leq t\big]\nonumber\\
&=\big(1-e^{-\deg_{G(\xiz)}(0)t}\big)\sum_{x\in\xiz}\big(1-e^{-\deg_{G(\tau_x\xiz)}(0)t}\big)\bPt_\xiz\big[\xit^0_1=\tau_x\xiz\big]\nonumber\\
&\leq t^2\deg_{G(\xiz)}(0)\bEt_\xiz\big[\deg_{G(\xit_1)}(0)\big],
\end{align}
thus, with (\ref{EqVelo5}) and (\ref{EqVelo6}):
\begin{equation}\label{EqVelo7}
S_1(\xiz)\leq t^2\Big(\deg_{G(\xiz)}(0)^2+\deg_{G(\xiz)}(0)\bEt_\xiz\big[\deg_{G(\xit_1)}(0)\big]\Big). 
\end{equation}

On the other hand, one has:
\begin{align}\label{EqVelo8}
S_2(\xiz )&=\sum_{x\in\xiz}\Vert x\Vert^2\big(\mathbf{1}_{x\sim 0}t-\bP_\xiz\big[\xi^0_t=\tau_x\xiz,n_*^\xiz(t)=1\big]\big)\leq t\sum_{x\sim 0}\Vert x\Vert^2.
\end{align}

Note that:
\begin{equation}\label{EqVeloA}
\E_0\Big[\deg_{G(\xiz)}(0)^2\sum_{x\sim 0}\Vert x\Vert^2\Big]
\leq \E_0\Big[\deg_{G(\xiz)}(0)^6\Big]^\frac{1}{2}
\E_0\Big[\big(\max_{x\sim 0}\Vert x\Vert\big)^4\Big]^\frac{1}{2}.
\end{equation}

Thanks to the definition of $\bPt$ in Subsection \ref{ChaineIncluse}, we can write
\begin{align*}
\E_0\Big[\deg_{G(\xiz)}(0)\bEt_\xiz&\big[\deg_{G(\xit_1)}(0)\big]\sum_{x\sim 0}\Vert x\Vert^2\Big]\nonumber\\
&= \E_0\big[\deg_{G(\xiz)}(0)\big]\bEt\Big[\deg_{G(\xit_1)}(0)\sum_{x\sim 0}\Vert x\Vert^2\Big]\nonumber\\
&\leq\E_0\big[\deg_{G(\xiz)}(0)\big]\bEt\Big[\deg_{G(\xit_1)}(0)^2\Big]^\frac{1}{2}\bEt\Big[\Big(\sum_{x\sim 0}\Vert x\Vert^2\Big)^2\Big]^\frac{1}{2}.
\end{align*}
Due to the stationarity of the discrete-time environment process under $\bPt$, one has:
\begin{align}\label{EqVeloB}
\E_0\Big[\deg_{G(\xiz)}&(0)\bEt_\xiz\big[\deg_{G(\xit_1)}(0)\big]\sum_{x\sim 0}\Vert x\Vert^2\Big]\nonumber\\
\leq&\E_0\big[\deg_{G(\xiz)}(0)\big]^\frac{1}{2}\bEt\Big[\deg_{G(\xi^0_0)}(0)^2\Big]^\frac{1}{2}\E_0\Big[\Big(\sum_{x\sim 0}\Vert x\Vert^2\Big)^2\deg_{G(\xi^0)}(0)\Big]^\frac{1}{2}\nonumber\\
\leq&\E_0\Big[\deg_{G(\xiz)}(0)^3\Big]^\frac{1}{2}\E_0\Big[\deg_{G(\xiz)}(0)^3\big(\max_{x\sim 0}\Vert x\Vert\big)^4\Big]^\frac{1}{2}\nonumber\\
\leq &\E_0\Big[\deg_{G(\xiz)}(0)^3\Big]^\frac{1}{2}\E_0\Big[\deg_{G(\xiz)}(0)^6\Big]^\frac{1}{4}\E_0\Big[\big(\max_{x\sim 0}\Vert x\Vert\big)^8\Big]^\frac{1}{4}.
\end{align}  
Thanks to (\ref{EqVelo4}), (\ref{EqVelo7}), (\ref{EqVelo8}), (\ref{EqVeloA}) and (\ref{EqVeloB}), the first summand of (\ref{EqVelo2}) goes to 0 as $t\rightarrow 0$.

Let us move to the second term in the r.h.s.\@\xspace of (\ref{EqVelo2}). Applying the H\"older inequality with exponents $4$ and $4/3$ and the Cauchy-Schwarz inequality, we obtain:
\begin{align}\label{EqVelo9}
\E_0\Big[\big\Vert \bE_\xiz\big[X_t\mathbf{1}_{n^{\xi^0}_*(t)\geq 2}\big]\big\Vert^2\Big]
&\leq \E_0\Big[ \bE_\xiz\big[\big\Vert X_t\Vert^4\big]^\frac{1}{2}\bP_\xiz\big[n^{\xi^0}_*(t)\geq 2\big]^\frac{3}{2}\Big]\nonumber\\
&\leq \E_0\Big[ \bE_\xiz\big[\big\Vert X_t\Vert^4\big]\Big]^\frac{1}{2}\E_0\Big[\bP_\xiz\big[n^{\xi^0}_*(t)\geq 2\big]^3\Big]^{\frac{1}{2}}.
\end{align}
One can obtain as in (\ref{EqVelo6}) the alternative upper bound:
\begin{align}\label{EqVelo10}
\bP_\xiz\big[n_*^\xiz(t)\geq 2\big]
&\leq t\bEt_\xiz\big[\deg_{G(\xit^0_1)}(0)\big].
\end{align}
Hence, $(\ref{EqVelo6})$ multiplied $(\ref{EqVelo10})$ to the square, the Jensen's inequality and the stationarity of the discrete-time environment process under $\bPt=\int \bPt_\xiz \Q_0 (\d \xiz)$ imply that:
\begin{align}\label{EqVelo11}
\E_0\Big[\bP_\xiz\big[n^{\xi^0}_*(t)\geq 2\big]^3\Big]
&\leq t^4\E_0\Big[\deg_{G (\xiz)}(0)\bEt_\xiz\big[\deg_{G (\xit_1^0)}(0)\big]^3\Big]\nonumber\\
&\leq t^4\E_0\Big[\deg_{G (\xiz)}(0)\bEt_\xiz\Big[\big(\deg_{G (\xit_1^0)}(0)\big)^3\Big]\Big]\nonumber\\
&= t^4\E_0\big[\deg_{G (\xiz)}(0)\big]\bEt\Big[\big(\deg_{G (\xit_1^0)}(0)\big)^3\Big]\nonumber\\
&= t^4\E_0\Big[\big(\deg_{G (\xi^0)}(0)\big)^4\Big].
\end{align}  

Finally, with (\ref{EqRemIntegrability}), (\ref{EqVelo9}) and (\ref{EqVelo11}), we obtain that for $t$ small:
\begin{align*}
\frac{1}{t^2}\E_0\Big[\big\Vert \bE_\xiz\big[X_t\mathbf{1}_{n^{\xi^0}_*(t)\geq 2}\big]\big\Vert^2\Big]&\leq \const{9} t^\frac{1}{4},
\end{align*}
which tends to 0 with $t$.
\end{dem}
\subsection{Square integrability of the martingale $M_t=X_t-\int_0^t\varphi(\xi^0_s)\d s$}\label{MartL2} 
Due to Lemma \ref{lemmMoments}, $X_t\in L^2(\Xi^0,\bP)$, so it is enough to see that:
\[\int_0^t\varphi(\xi^0_s)\d s\in L^2(\Xi^0,\bP).\]
Using the Cauchy-Schwarz inequality and the stationarity of the environment process under $\bP$, one has:
\begin{align*}
\bE\Big[\Big\Vert\int_0^t\varphi (\xi^0_s)\d s\Big\Vert^2\Big]&\leq t \bE\Big[\int_0^t\big\Vert\varphi (\xi^0_s)\big\Vert^2\d s\Big]=t\int_0^t\bE\big[\big\Vert\varphi (\xi^0_s)\big\Vert^2\big]\d s=t^2\bE\big[\big\Vert\varphi (\xi^0_0)\big\Vert^2\big]\\
&=t^2\E_0\big[\big\Vert\varphi\big\Vert^2\big]\leq t^2\E_0\big[\deg_{G(\xiz)}(0)^4\big]^\frac{1}{2}\E_0\big[\big(\max_{x\sim 0}\Vert x\Vert\big)^4\big]^\frac{1}{2}<\infty.
\end{align*}

\subsection{Infinitesimal square displacement as a $\mathbf{L^2}$-limit }
By following precisely the same proof as in Lemma \ref{lemmvelocity} with higher order moments estimates, one obtains: 

\begin{lemm}\label{lemmSquareDisplacement}
For $\xiz\in\N_0$, let $\psi$ be the $d\times d$ symmetric matrix  characterized by:
\begin{equation}\label{defpsi}
\big(a\cdot \psi(\xiz)a\big):=\sum_{x\sim 0} (a\cdot x)^2,\quad a\in \RR^d.
\end{equation}
Then, $\big(a\cdot \psi(\cdot)a\big)\in L^2(\N_0, \P_0)$ and:
\begin{equation*}
\frac{1}{t}\bE_{\cdot}\big[(a\cdot X_t)^2\big]\xrightarrow[t\rightarrow 0]{\, L^2(\N_0,\P_0)\,}
  \big(a\cdot \psi(\cdot )a\big).
\end{equation*}
\end{lemm}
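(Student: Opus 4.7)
The plan is to imitate the proof of Lemma \ref{lemmvelocity} with higher order moments, adapted to squared increments. I would proceed in four steps.

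First, I would verify that $\bigl(a\cdot\psi(\cdot)a\bigr)\in L^2(\N_0,\P_0)$. Since $(a\cdot\psi(\xiz)a)\le \Vert a\Vert^2\deg_{G(\xiz)}(0)\bigl(\max_{x\sim 0}\Vert x\Vert\bigr)^2$, Cauchy--Schwarz together with the assumption that $\deg_{G(\xiz)}(0)$ and $\max_{x\sim 0}\Vert x\Vert$ admit polynomial moments of any order under $\P_0$ gives $\E_0\bigl[(a\cdot\psi(\xiz)a)^2\bigr]<\infty$.

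Next, for the $L^2$-convergence, I would copy the decomposition used in the proof of Lemma \ref{lemmvelocity}. Since $X_t\mathbf{1}_{n_*^\xiz(t)=0}=0$, I write
\[
\frac{1}{t^2}\E_0\Bigl[\bigl|\bE_\xiz[(a\cdot X_t)^2]-t\bigl(a\cdot\psi(\xiz)a\bigr)\bigr|^2\Bigr]
\le \frac{2}{t^2}A(t)+\frac{2}{t^2}B(t),
\]
where $A(t):=\E_0\bigl[\bigl|\bE_\xiz[(a\cdot X_t)^2\mathbf{1}_{n_*^\xiz(t)=1}]-t(a\cdot\psi(\xiz)a)\bigr|^2\bigr]$ and $B(t):=\E_0\bigl[\bE_\xiz[(a\cdot X_t)^2\mathbf{1}_{n_*^\xiz(t)\ge 2}]^2\bigr]$, and it suffices to show each summand is $o(t^2)$.

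For $A(t)$, setting $\alpha_x:=\mathbf{1}_{x\sim 0}t-\bP_\xiz[\xi^0_t=\tau_x\xiz,\,n_*^\xiz(t)=1]\ge 0$ as in the proof of Lemma \ref{lemmvelocity}, I get
\[
\Bigl|\bE_\xiz[(a\cdot X_t)^2\mathbf{1}_{n_*^\xiz(t)=1}]-t(a\cdot\psi(\xiz)a)\Bigr|^2=\Bigl(\sum_{x\in\xiz}(a\cdot x)^2\alpha_x\Bigr)^2\le S_1(\xiz)\cdot\widetilde{S}_2(\xiz)
\]
via Cauchy--Schwarz, with $S_1(\xiz)=\sum_x\alpha_x$ as in \eqref{EqVelo5} and $\widetilde{S}_2(\xiz):=\sum_x(a\cdot x)^4\alpha_x\le \Vert a\Vert^4 t\sum_{x\sim 0}\Vert x\Vert^4\le \Vert a\Vert^4 t\,\deg_{G(\xiz)}(0)(\max_{x\sim 0}\Vert x\Vert)^4$. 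Using inequality \eqref{EqVelo7} for $S_1$ and repeating the two Cauchy--Schwarz/stationarity estimates \eqref{EqVeloA}--\eqref{EqVeloB} on the resulting product (simply with $\Vert x\Vert^4$ in place of $\Vert x\Vert^2$, hence one further power of $\max_{x\sim 0}\Vert x\Vert$ and of $\deg_{G(\xiz)}(0)$), I reduce everything to finitely many polynomial moments of $\deg_{G(\xiz)}(0)$ and $\max_{x\sim 0}\Vert x\Vert$, which are finite by hypothesis. The resulting bound is $A(t)\le C t^3$, so $t^{-2}A(t)\to 0$.

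For $B(t)$, I apply Hölder with exponents $4$ and $4/3$ and then Cauchy--Schwarz:
\[
B(t)\le \E_0\Bigl[\bE_\xiz[(a\cdot X_t)^8]^{1/2}\bP_\xiz[n_*^\xiz(t)\ge 2]^{3/2}\Bigr]\le \E_0\bigl[\bE_\xiz[\Vert X_t\Vert^8]\bigr]^{1/2}\,\E_0\bigl[\bP_\xiz[n_*^\xiz(t)\ge 2]^3\bigr]^{1/2}.
\]
The first factor is $O(t^{1/4})$ by Lemma \ref{lemmMoments} applied with $\gamma=8$, while the second factor is $O(t^2)$ by exactly the estimate \eqref{EqVelo11}. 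Thus $B(t)\le C\,t^{9/4}$ and $t^{-2}B(t)\to 0$, completing the $L^2$ convergence.

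The calculations are essentially mechanical once the decomposition is in place; the only real bookkeeping point is to track which moments of $\deg_{G(\xiz)}(0)$ and $\max_{x\sim 0}\Vert x\Vert$ one uses, and to check that $\bE_\xiz[\Vert X_t\Vert^8]$ can be controlled via Lemma \ref{lemmMoments} in the same way as the fourth-moment bound was used in the velocity argument. Since the hypothesis of Proposition \ref{PropCV} gives polynomial moments of all orders, nothing obstructs the argument.
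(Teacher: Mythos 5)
Your proposal is correct and takes exactly the route the paper indicates for this lemma: the paper itself gives no separate proof, stating only that one obtains the result ``by following precisely the same proof as in Lemma~\ref{lemmvelocity} with higher order moments estimates,'' and that is what you carry out, with the same decomposition by number of jumps, Cauchy--Schwarz on the single-jump term using $S_1$ together with a fourth-power analogue of $S_2$, and H\"older plus Lemma~\ref{lemmMoments} with $\gamma=8$ combined with (\ref{EqVelo11}) on the multi-jump term. The bookkeeping you flag (one extra power of $\max_{x\sim 0}\Vert x\Vert$ and of $\deg_{G(\xiz)}(0)$ relative to (\ref{EqVeloA})--(\ref{EqVeloB})) is exactly what is needed and is covered by the polynomial-moment hypothesis of Proposition~\ref{PropCV}.
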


Since its assumptions were verified in this section, Theorem 2.2 of \cite{DFGW} implies Proposition \ref{PropCV}. It gives moreover the following characterization of the diffusion matrix $D$ of the limiting Brownian motion: 
\begin{equation}\label{EqVaria1}
(a\cdot Da)=\E_0\big[(a\cdot \psi a)\big]-2\int_0^\infty \langle \varphi\cdot a, e^{t\L}\varphi\cdot a\rangle_{\P_0}, \quad a\in\RR^d, 
\end{equation}
where $\L$ is the generator of the environment process and $\varphi$, $\psi$ are defined in (\ref{defphi}) and (\ref{defpsi}) respectively.

\section[The diffusion coefficient is nondegenerate]{Proof of Proposition \ref{PropNonDege}: the diffusion coefficient is nondegenerate}\label{Nondege}  
From now on, we assume that the hypotheses of Proposition \ref{PropNonDege} are satisfied and we prove it implies the non-degeneracy of the limiting Brownian motion.
\subsection{Variational formula}\label{FormuleVariationnelle}
In order to show that the diffusion matrix of the limiting Brownian motion in Proposition \ref{PropCV} is nondegenerate, it is useful to obtain a variational characterization of this matrix. The use of variational formulas to approximate diffusion coefficients is rather classical. It appears in particular in papers based on the works of Kozlov \cite{Kozlov} or De Masi, Ferrari, Goldstein and Wick \cite{DFGW}. In principle, it allows to derive the nondegeneracy of the diffusion coefficient and to obtain numerical estimates for it. See \cite{FSS,BiskupLN} and references therein.  
\begin{lemm}\label{LemmeVaria}
The diffusion matrix of the limiting Brownian motion in Proposition \ref{PropCV} is given by:
\begin{equation}\label{EqVaria2}
(a\cdot Da)=\inf_{f\in L^\infty(\N_0,\P_0)}\int_{\N_0}\sum_{x\in\xiz} c_{0,x}^\xiz\big( (a\cdot x) +\nabla_xf (\xiz)\big)^2\P_0 (\d \xiz ), \quad a\in\RR^d, 
\end{equation}
where $\nabla_x$ is the discrete gradient $\nabla_xf (\xiz):=f(\tau_x\xiz)-f(\xiz)$.
\end{lemm}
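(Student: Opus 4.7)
The plan is to derive (\ref{EqVaria2}) from the spectral representation (\ref{EqVaria1}) by recognizing the functional appearing under the infimum as a \emph{completed square} for the Dirichlet form of the environment process, shifted by the constant $(a\cdot Da)$. Throughout, let $\L$ denote the generator of the environment process, acting on bounded cylindrical functions as $\L f(\xi^0) = \sum_{x\sim 0}\nabla_x f(\xi^0)$, and write $\mathcal{E}(f,f):=\langle f,-\L f\rangle_{\P_0}$ for its Dirichlet form.

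The first step is to identify $\mathcal{E}(f,f)$ with $\frac{1}{2}\int\sum_{x\sim 0}(\nabla_x f)^2\,\d\P_0$. This follows from the reversibility of the environment process (Subsection \ref{Reversibility}) together with Neveu's exchange formula $\int\sum_{x\in\xi^0}h(\xi^0,x)\,\d\P_0 = \int\sum_{x\in\xi^0}h(\tau_x\xi^0,-x)\,\d\P_0$ applied to $h(\xi^0,x) = \mathbf{1}_{x\sim 0}(f(\tau_x\xi^0))^2$, using the key translation-equivariance fact that $x\sim 0$ in $\xi^0$ if and only if $-x\sim 0$ in $\tau_x\xi^0$.

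The second step is a direct expansion. Writing $\Phi(f)$ for the quantity on the right-hand side of (\ref{EqVaria2}) and using $c^{\xi^0}_{0,x} = \mathbf{1}_{x\sim 0}$, one obtains
\[
\Phi(f) = \E_0\bigl[(a\cdot\psi a)\bigr] + 2\int_{\N_0}\sum_{x\sim 0}(a\cdot x)\,\nabla_x f(\xi^0)\,\P_0(\d\xi^0) + 2\mathcal{E}(f,f).
\]
Applying Neveu's formula to $h(\xi^0,x) = \mathbf{1}_{x\sim 0}(a\cdot x)f(\tau_x\xi^0)$ gives $\int\sum_{x\sim 0}(a\cdot x)f(\tau_x\xi^0)\,\d\P_0 = -\langle a\cdot\varphi,f\rangle_{\P_0}$, so the cross term collapses to $-4\langle a\cdot\varphi,f\rangle_{\P_0}$. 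Completing the square in the self-adjoint non-positive operator $\L$ then yields
\[
\Phi(f) = \E_0[(a\cdot\psi a)] - 2\langle a\cdot\varphi,(-\L)^{-1}(a\cdot\varphi)\rangle_{\P_0} + 2\mathcal{E}(f-f_*,f-f_*),
\]
where $f_*$ is the formal solution of the Poisson equation $-\L f_* = a\cdot\varphi$. By the spectral identity $(-\L)^{-1} = \int_0^\infty e^{t\L}\,\d t$ on $(\ker\L)^\perp$ combined with (\ref{EqVaria1}), the first two terms sum to $(a\cdot Da)$, so $\Phi(f)\geq(a\cdot Da)$ for every admissible $f$.

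The main obstacle is justifying that the infimum can be restricted to $L^\infty(\N_0,\P_0)$: the formal minimizer $f_*$ need not be bounded nor lie in $L^2$, and $(-\L)^{-1}$ must be interpreted via the resolvent limit $\lim_{\varepsilon\downarrow 0}(\varepsilon-\L)^{-1}$. I would handle this by a standard two-step approximation: set $f_\varepsilon:=(\varepsilon-\L)^{-1}(a\cdot\varphi)\in L^2(\N_0,\P_0)$ and then truncate $f_\varepsilon^N := (-N)\vee f_\varepsilon\wedge N\in L^\infty(\N_0,\P_0)$. The Markovian property of the Dirichlet form gives $\mathcal{E}(f_\varepsilon^N,f_\varepsilon^N)\leq\mathcal{E}(f_\varepsilon,f_\varepsilon)$, with $\mathcal{E}(f_\varepsilon^N,f_\varepsilon^N)\to\mathcal{E}(f_\varepsilon,f_\varepsilon)$ as $N\to\infty$ by lower semicontinuity, while $\langle a\cdot\varphi,f_\varepsilon^N\rangle\to\langle a\cdot\varphi,f_\varepsilon\rangle$ by dominated convergence (using $a\cdot\varphi\in L^2(\N_0,\P_0)$ as noted in Subsection \ref{MartL2}). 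Letting first $N\to\infty$ and then $\varepsilon\downarrow 0$ in $\Phi(f_\varepsilon^N)$ yields a sequence of $L^\infty$ functions along which $\Phi\to(a\cdot Da)$, which proves (\ref{EqVaria2}).
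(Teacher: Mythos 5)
Your proposal is correct and follows essentially the same route as the paper: expand the square, use Neveu's exchange formula twice to identify the Dirichlet form $\langle f,(-\L)f\rangle_{\P_0}=\frac12\int\sum_{x}c^{\xiz}_{0,x}(\nabla_xf)^2\,\d\P_0$ and the cross term $-4\langle\varphi\cdot a,f\rangle_{\P_0}$, and then match the resulting quadratic functional against (\ref{EqVaria1}). The only difference is that the paper simply cites \cite[eqs.~(45)--(46)]{FSS} for the identity $\int_0^\infty\langle\varphi\cdot a,e^{t\L}\varphi\cdot a\rangle_{\P_0}\,\d t=\sup_{f\in L^\infty(\N_0,\P_0)}\big(2\langle\varphi\cdot a,f\rangle_{\P_0}-\langle f,(-\L)f\rangle_{\P_0}\big)$, whereas you reprove it via the resolvent approximation $f_\eps=(\eps-\L)^{-1}(\varphi\cdot a)$ followed by truncation, which is exactly the standard argument behind that citation.
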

\begin{dem}
Minor changes to the proof of \cite[Proposition 3]{FSS} show that the generator $\L$ of the environment process is non-positive and self-adjoint with core $L^\infty (\N_0,\P_0)$. Moreover, for any $f\in L^\infty (\N_0,\P_0)$, one has:
\[\L f(\xiz)=\sum_{x\in\xiz}c_{0,x}^\xiz\big(f(\tau_x\xiz)-f(\xiz)\big),\]
where $c_{0,x}^\xiz$ equals the indicator function of the event `$\{0,x\}\in E_{G(\xiz)}$'. We refer for example to \cite[Chapter X]{ReedSimonII} for basic tools to prove this.
 
Hence, following the same strategy as in \cite{FSS} and invoking equations (45) and (46) of this paper, one obtains:
\begin{equation}\label{EqVaria3}
\int_0^\infty\langle \varphi\cdot a,e^{t\L}\varphi\cdot a\rangle_{\P_0}\d t=\sup_{f\in L^\infty(\N_0,\P_0)}\Big(2\langle\varphi\cdot a,f\rangle_\Pz-\langle f,(-\L)f\rangle_\Pz\Big).
\end{equation}

On the other hand, Neveu's exchange formula gives:
\begin{align*}
\int_\Nz \sum_{x\in\xiz}c_{0,x}^\xiz f(\xiz)^2 \P_0 (\d\xiz)&=\int_\Nz \sum_{x\in\xiz}c_{0,-x}^{\tau_x\xiz} f(\tau_x\xiz)^2 \P_0 (\d\xiz)\\
&=\int_\Nz \sum_{x\in\xiz}c_{0,x}^\xiz f(\tau_x\xiz)^2 \P_0 (\d\xiz)
\end{align*}
thus,
\begin{align}\label{EqVaria4}
\langle f, (-\L)f\rangle_\Pz &=\int_\Nz f (\xiz)\sum_{x\in\xiz}c_{0,x}^\xiz\big(f(\xiz)-f(\tau_x\xiz)\big)\Pz (d\xiz)\nonumber\\
&=\frac{1}{2}\int_\Nz \sum_{x\in\xiz}c_{0,x}^\xiz\big(f(\tau_x\xiz)^2+f(\xiz)^2-2f(\tau_x\xiz)f(\xiz)\big)\Pz (d\xiz)\nonumber\\
&=\frac{1}{2}\int_\Nz \sum_{x\in\xiz}c_{0,x}^\xiz\big(\nabla_x f(\xiz )\big)^2\Pz (d\xiz).
\end{align}

In the same way, Neveu's exchange formula gives:
\[\int_\Nz \sum_{x\in\xiz}c_{0,x}^\xiz (a\cdot x) f(\xiz) \P_0 (\d\xiz)
=-\int_\Nz \sum_{x\in\xiz}c_{0,x}^\xiz (a\cdot x)f(\tau_x\xiz) \P_0 (\d\xiz)
\]
which implies that:
\begin{align}\label{EqVaria5}
\langle \varphi\cdot a, f\rangle_\Pz &=\int_\Nz \sum_{x\in\xiz}c_{0,x}^\xiz(a\cdot x) f(\xiz)\Pz (\d\xiz)\nonumber\\
&=-\frac{1}{2}\int_\Nz \sum_{x\in\xiz}c_{0,x}^\xiz(a\cdot x)\nabla_x f(\xiz)\Pz (\d\xiz).
\end{align}

Finally, with equations (\ref{EqVaria1}), (\ref{EqVaria3})-(\ref{EqVaria5}), and the definition of $\psi$, one has:
\begin{align*}
(a\cdot Da)&=\E_0\big[(a\cdot \psi a)\big]-2\sup_{f\in L^\infty(\N_0,\P_0)}\Big(2\langle\varphi\cdot a,f\rangle_\Pz-\langle f,(-\L)f\rangle_\Pz\Big)\\
&=\inf_{f\in L^\infty(\N_0,\P_0)}\Big(\E_0\big[(a\cdot \psi a)\big]-4\langle\varphi\cdot a,f\rangle_\Pz+2\langle f,(-\L)f\rangle_\Pz\Big)\\
&=\inf_{f\in L^\infty(\N_0,\P_0)}\int_\Nz\sum_{x\in\xiz}c_{0,x}^\xiz\Big\{(a\cdot x)^2+2(a\cdot x)\nabla_xf(\xiz )+\big(\nabla_xf(\xiz )\big)^2\Big\}\Pz (\d\xiz )\\
\end{align*}
which gives the required result.
\end{dem}

For sake of simplicity, we assume from now on that the underlying point process is isotropic. Nevertheless, the arguments below can be easily adapted to the anisotropic case. 

\subsection{Cut-off on the transition rates and lower bound for the diffusion matrix}\label{CutOff}

In the sequel, we will consider the (largest) subgraph of $G(\xiz)$ in which each edge has length at most $r_c>0$. For $r_c>0$ fixed, we set:
\begin{equation}\label{EqCutOff1}
\widehat{c}_{x,y}^\xiz :=\mathbf{1}_{\{x,y\}{\tiny\mbox{ is an edge of }}G(\xiz){\tiny\mbox{ and }}\Vert y-x\Vert \leq r_c}\leq c_{x,y}^\xiz.
\end{equation}

Thanks to the variational formula (\ref{EqVaria2}), we immediately obtain that: 
\begin{align}\label{EqCutOff2}
(a\cdot Da)
&\geq\inf_{f\in L^\infty(\N_0,\P_0)}\int_{\N_0}\sum_{x\in\xiz} \widehat{c}_{0,x}^\xiz\big( (a\cdot x) +\nabla_xf (\xiz)\big)^2\P_0 (\d \xiz ).
\end{align}

As above, one can show that the operator $\widehat{\L}$  defined by:
\[\widehat{\L}f(\xiz):=\sum_{x\in\xiz}\widehat{c}_{0,x}^\xiz \big(f(\tau_x\xiz)-f(\xiz)\big),\quad f\in\mathcal{D}(\widehat{\L}),\]
has core $L^\infty (\N_0,\P_0)$, is self-adjoint and is the generator of a Markov process $(\hat{\xi}^0_t)_{t\geq 0}$ on $D(\RR_+;\Nz)$. We write $\widehat{\mathbf{P}}_\xiz$ (resp. $\widehat{\mathbf{P}}$) for the probability measure on $D(\RR_+;\Nz)$ associated with this Markov process with initial distribution $\delta_\xiz$ (resp. $\P_0$). The same arguments as in Subsection \ref{FormuleVariationnelle} show that the r.h.s.\@\xspace of (\ref{EqCutOff2}) is actually equal to:
\begin{equation}\label{EqCutOff3}
( a\cdot \widehat{D}a):=\E_0\big[(a\cdot\widehat{\psi} a)\big]-2\int_0^\infty\langle \widehat{\varphi}\cdot a,e^{t\widehat{\L}}\widehat{\varphi}\cdot a\rangle_\Pz\d t, 
\end{equation}
where
\[\widehat{\varphi}(\xiz):=\sum_{x\in\xiz}\widehat{c}_{0,x}^\xiz x\mbox{ and } (a\cdot \widehat{\psi} (\xiz )a):=\sum_{x\in\xiz}\widehat{c}_{0,x}^\xiz (a\cdot x)^2.\]
Hence, (\ref{EqCutOff2}) is equivalent to the inequality $D\geq\widehat{D}$.
\subsection{Periodic approximants and electrical networks} The aim of this subsection is to bound from below $\widehat{D}$ by using diffusion coefficients of random walks in appropriate periodic electrical networks. The method is very similar to the one in \cite{DFGW, FSS} and relies on the  theory of electrical networks (see \cite{SW,Lyons}) and homogenization (see \cite{JKO,Owhadi}). Actually, we will define suitable random resistor networks with diffusion coefficient which can be expressed as an effective conductance and which is lower than $\widehat{D}$. In the sequel, the medium will be periodized along the first coordinate direction.

\subsubsection{Periodized medium}\label{PeriodizedMedium}
For $\xi\in\N$ a locally finite subset of $\RR^d$ and $N>r_c$, we will construct two finite networks from $G(\xi)$. To this end, let us define the following subsets of $\RR^d$:
\[C_{2N}:=[-N,N]^d,\quad \oC_{2N}:=]-N,N[^d, \quad \gQ_N^\xi:=\xi \cap\oC_{2N},\]
\[\Gamma_N^\pm:=\big\{x=(x_1,\dots,x_d)\in\ZZ^d:\,x_1=\pm N,\,\vert x_j\vert <N, \,j=2,\dots,d\big\},\]
\[\gVb_N^\xi:=\gQ_N^\xi\cup \Gamma_N^-\cup\Gamma_N^+,\quad B_N^-:=\big\{x\in\oC_{2N}:\,x_1\in ]-N,-N+r_c] \big\},\]
\[B_N^+:=\big\{x\in\oC_{2N}:\,x_1\in [N-r_c,N[ \big\},\quad B_N^{\xi,\pm}:=\gQ_N^\xi\cap B_N^\pm.\]
A finite (unoriented) graph $\gGb_N^\xi=(\gVb_N^\xi,\gEb_N^\xi)$ is then constructed; it has an edge between $x$ and $y$ if:
\begin{itemize}
\item $x,y\in\gQ_N^\xi$, $\{x,y\}$ is an edge of $G(\xi)$ and $\Vert x-y\Vert \leq r_c$,
\item $x\in B_N^{\xi,-}$ and $y\in\Gamma_N^-$, 
\item $x\in B_N^{\xi,+}$ and $y\in\Gamma_N^+$.
\end{itemize}

\begin{figure}\label{FigconstGN}
\includegraphics[scale=0.5]{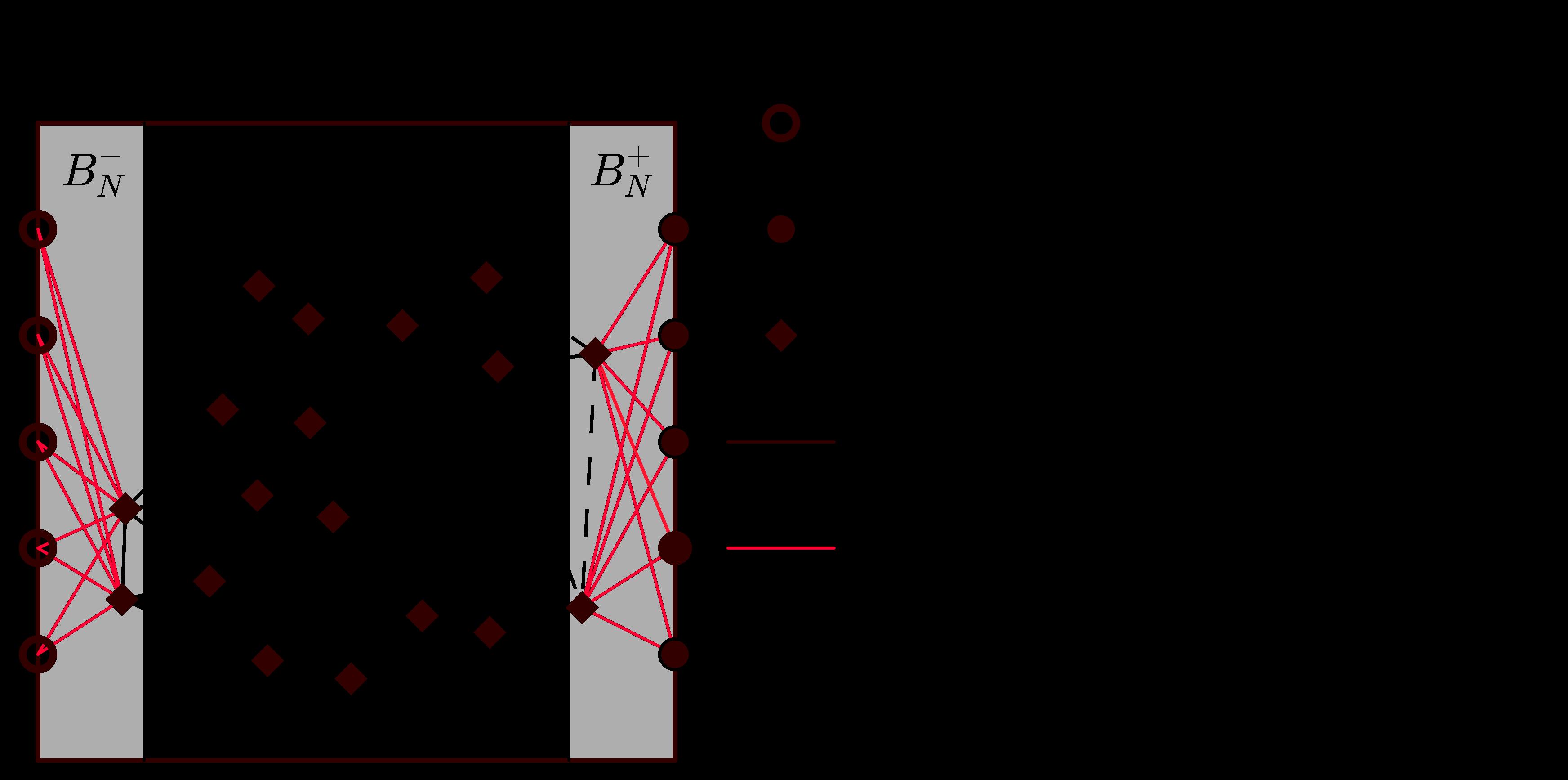}
\caption{The vertex set of $\gGb_N^\xi$ contains the points of $\gQ_N^\xi=\xi \cap\oC_{2N}$, $\Gamma_N^-$ and $\Gamma_N^+$. An edge of $G(\xi)$ between points of $\mathfrak{Q}^\xi_N$ is an edge of $\gGb_N^\xi$ {\it iff} its Euclidean length is smaller than $r_c$. There is also an edge in $\gGb_N^\xi$ between each point of $\xi\cap B_N^{-}$ (resp. $\xi\cap B_N^{+}$) and each point of $\Gamma_N^-$ (resp. $\Gamma_N^+$).}
\end{figure}

Another graph $\gG_N^\xi=(\gV_N^\xi,\gE_N^\xi)$ is obtained from $\gGb_N^\xi$ by identifying vertices:
\[x_-=(-N, x_2,\dots,x_d)\mbox{ and }x_+=(N, x_2,\dots,x_d).\] 
We denote by $\gp$ the projection map from $\gVb_N^\xi$ onto $\gV_N^\xi$. Note that $\gp_{\vert \gQ_N^\xi}$ is nothing but the identity. The graph $\gG_N^\xi$ will represent the \emph{periodized medium} in which any vertex in $\Gamma_N:=\gp (\Gamma_N^-)(=\gp (\Gamma_N^+))$ is connected to any vertex in $B_N^\xi:=B_N^{\xi,-}\cup B_N^{\xi,+}$. 

We consider now a continuous-time random walk on $\gV_N^\xi$ with infinitesimal generator given by:
\begin{equation*}
\L_N^\xi f(x):=\sum_{y\in \gV_N^\xi}\gc_N^\xi(x,y)\big(f(y)-f(x)\big),\quad \forall x\in\gV_N^\xi,
\end{equation*}
where
\begin{equation}\label{EqCondPeriodiques}
\gc_N^\xi(x,y):=\left\lbrace\
\begin{array}{ll}
1 &\mbox{if }\{ x,y\}\in\gE_N^\xi\mbox{ with }x,y\in\gQ_N^\xi\\
\dfrac{1}{\#(\Gamma_N)} &\mbox{if }\{ x,y\}\in\gE_N^\xi\mbox{ with }x\mbox{ or }y\in\Gamma_N\\
0 &\mbox{if }\{ x,y\}\not\in\gE_N^\xi
\end{array}\right. 
.
\end{equation}

Since $\mathfrak{c}^\xi_N(x,y)=\mathfrak{c}^\xi_N(y,x)$, the generator $\L_N^\xi$ is self-adjoint in $L^2(\gV_N^\xi,\gm_N^\xi)$ where $\gm_N^\xi$ is the uniform distribution on $\gV_N^\xi$. In general $\gG_N^\xi$ is not connected and $\gm_N^\xi$ is not ergodic. In this case, ergodic probability measures are uniform distributions on the connected components of $\gG_N^\xi$ and one can apply the results of De Masi, Ferrari, Goldstein and Wick in each connected component. We denote by $\bP_N^\xi$ (resp. $\bP_{N,x}^\xi,\, x\in\gV_N^\xi$) the distribution on the path space $\Omega_N^\xi:=D(\RR_+;\gV_N^\xi)$ of the random walk with generator $\L_N^\xi$ and initial distribution $\gm_N^\xi$ (resp. $\delta_x$). In order to apply one more time \cite[Theorem 2.2]{DFGW}, let us introduce $d_1$ the antisymmetric function on $\gV_N^\xi$ such that:
\begin{equation*}
d_1(x,y):=\left\lbrace\
\begin{array}{ll}
y_1-x_1 &\mbox{if }x,y\in\gQ_N^\xi\\
y_1+N &\mbox{if }y\in\gQ_N^\xi,\, y_1<0\mbox{ and }x\in\Gamma_N\\
y_1-N &\mbox{if }y\in\gQ_N^\xi,\, y_1>0\mbox{ and }x\in\Gamma_N
\end{array}\right. .
\end{equation*}
Moreover,  for $t\geq 0$, $\underline{\omega}=(\omega_s)_{s\geq 0}\in\Omega_N^\xi$, we define the random variable:
\begin{equation*}
X_{N,t}^{(1),\xi}(\underline{\omega}):=\sum_{\tiny
\begin{array}{c}
s\in [0,t]\\
\omega_{s^-}\neq\omega_s
\end{array}
}d_1(\omega_{s^-},\omega_s).
\end{equation*} 
The last quantity is the sum of the increments of the projections along the first coordinate of the positions taken by the particle up to time $t$. It is then possible to apply \cite[Theorem 2.2]{DFGW} to the time-covariant antisymmetric family $\{X_{N,[s,t]}^{(1),\xi}:=X_{N,t}^{(1),\xi}-X_{N,s}^{(1),\xi}\}_{t>s\geq 0}$ and obtain in particular:
\begin{lemm}\label{LemmPeriodizedMedium}
For $N\in\NN$, $N>r_c$ and $\xi\in\N$:
\begin{equation*}
\lim_{t\rightarrow\infty}\frac{1}{t}\bE_N^\xi\big[(X_{N,t}^{(1),\xi})^2\big]=D_N^\xi:=\gm_N^\xi \big(\psi_N^\xi \big)-2\int_0^\infty \langle \varphi_N^\xi,e^{t\L_N^\xi}\varphi_N^\xi \rangle_{\gm_N^\xi} \d t,
\end{equation*}
where $\varphi_N^\xi$ and $\psi_N^\xi$ are the scalar functions defined by:
\begin{equation}\label{EqLemmPeriodizedMedium2}\varphi_N^\xi (x):=\sum_{y\in\gV_N^\xi}\gc_N^\xi(x,y)d_1(x,y)\mbox{ and }\psi_N^\xi (x):=\sum_{y\in\gV_N^\xi}\gc_N^\xi(x,y)d_1(x,y)^2.
\end{equation}
\end{lemm}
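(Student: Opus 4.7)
The plan is to apply \cite[Theorem 2.2]{DFGW} to the continuous-time Markov chain on the finite state space $\gV_N^\xi$ generated by $\L_N^\xi$, together with the antisymmetric time-covariant family $\{X_{N,[s,t]}^{(1),\xi}\}_{t>s\geq 0}$, working component by component as indicated in the paragraph preceding the lemma. First, I would check reversibility: the symmetry $\gc_N^\xi(x,y)=\gc_N^\xi(y,x)$ built into \eqref{EqCondPeriodiques} makes $\L_N^\xi$ self-adjoint in $L^2(\gV_N^\xi,\gm_N^\xi)$, so $\gm_N^\xi$ is reversible. When $\gG_N^\xi$ is disconnected, the ergodic decomposition of $\gm_N^\xi$ consists of uniform measures on the connected components; on each such component the restricted chain is irreducible, reversible and ergodic.

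Second, the integrability and square-integrability hypotheses of \cite[Theorem 2.2]{DFGW} are automatic in this setting: $\gV_N^\xi$ is finite, $\L_N^\xi$ is bounded, and each increment of $X_{N,t}^{(1),\xi}$ has magnitude at most $2N$, so the number of jumps up to time $t$ is stochastically dominated by a Poisson variable with parameter $t\cdot \max_{x}\sum_{y}\gc_N^\xi(x,y)$. In particular $X_{N,t}^{(1),\xi}$ has moments of every order and the martingale $M_t=X_{N,t}^{(1),\xi}-\int_0^t \varphi_N^\xi(\omega_s)\,\mathrm{d}s$ that will appear in the DFGW decomposition is bounded.

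Third, I would identify the infinitesimal quantities. By conditioning on the first jump of a continuous-time Markov chain on a finite state space one obtains, for every $x\in\gV_N^\xi$,
\[
\bE_{N,x}^\xi\!\big[X_{N,t}^{(1),\xi}\big]=t\,\varphi_N^\xi(x)+O(t^2),\qquad \bE_{N,x}^\xi\!\big[(X_{N,t}^{(1),\xi})^2\big]=t\,\psi_N^\xi(x)+O(t^2),
\]
with $\varphi_N^\xi$ and $\psi_N^\xi$ as in \eqref{EqLemmPeriodizedMedium2}. Since $\gV_N^\xi$ is finite these expansions are uniform in $x$ and hold a fortiori in $L^2(\gV_N^\xi,\gm_N^\xi)$, which supplies the mean forward velocity and infinitesimal squared displacement required by \cite[Theorem 2.2]{DFGW}.

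Finally, applying \cite[Theorem 2.2]{DFGW} on each connected component $C$ of $\gG_N^\xi$ with invariant measure $\mu_C=\gm_N^\xi(\cdot\cap C)/\gm_N^\xi(C)$ gives a variance formula of the announced form on $C$; since $\psi_N^\xi$, $\varphi_N^\xi$ and the semigroup $e^{t\L_N^\xi}$ are all block-diagonal with respect to the partition of $\gV_N^\xi$ into components, averaging these component-wise identities against $\gm_N^\xi$ reproduces
\[
\lim_{t\to\infty}\frac{1}{t}\bE_N^\xi\!\big[(X_{N,t}^{(1),\xi})^2\big]=\gm_N^\xi(\psi_N^\xi)-2\int_0^\infty \langle\varphi_N^\xi,e^{t\L_N^\xi}\varphi_N^\xi\rangle_{\gm_N^\xi}\,\mathrm{d}t.
\]
The only real obstacle is a bookkeeping one, namely the possible disconnectedness of $\gG_N^\xi$, which is dispatched by the component-wise application above; everything else is a routine translation of the DFGW machinery to a finite reversible Markov chain.
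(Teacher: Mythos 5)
Your proposal is correct and mirrors the paper's treatment: the paper likewise invokes \cite[Theorem 2.2]{DFGW} after observing that the symmetry $\gc_N^\xi(x,y)=\gc_N^\xi(y,x)$ makes $\L_N^\xi$ self-adjoint in $L^2(\gV_N^\xi,\gm_N^\xi)$ and that, when $\gG_N^\xi$ is disconnected, one applies DFGW on each connected component (uniform measure on each) and then averages. The only slight imprecision is calling the martingale $M_t$ ``bounded''---for fixed $t$ it is not almost surely bounded since the jump count is unbounded---but the Poisson domination you give establishes moments of every order, hence the square integrability that the theorem actually requires.
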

\subsubsection{Relationship between $\widehat{D}$ and the diffusion constants of periodized media}
We compare $\widehat{D}$ (and therefore $D$) with suitable averages of the diffusion coefficients of the periodized media.
\begin{lemm}\label{LemmeComparaisonDesCoefficientsDeDiffusion}
It holds that:
\begin{equation}\label{EqComparaison2}
\lim_{N\rightarrow \infty}\E\big[\gm_N^\xi(\psi_N^\xi)\big]=\E_0\big[\widehat{\psi}_{1,1}\big]
\end{equation}
and
\begin{equation}\label{EqComparaison1}
\lim_{N\rightarrow \infty}\E\big[\langle \varphi_N^\xi,e^{t\L_N^\xi}\varphi_N^\xi\rangle_{\gm_N^\xi}\big]=\langle \widehat{\varphi}_1,e^{t\widehat{\L}}\widehat{\varphi}_1\rangle_{\P_0}
\end{equation}
where $\widehat{\varphi}_1$ (resp. $\widehat{\psi}_{1,1}$) is the first coordinate element of $\widehat{\varphi}$ (resp. the first diagonal element of $\widehat{\psi}$) defined in Subsection \ref{CutOff}.

In particular,
\begin{equation}\label{EqComparaison3}
D\geq \widehat{D}\geq \Big(\limsup_{N\rightarrow \infty}\E\big[D_N^\xi\big]\Big)\mathbf{I}_d.
\end{equation}
\end{lemm}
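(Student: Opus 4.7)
\begin{dem}[Proof plan]
The plan is to treat the three assertions in order: a spatial ergodic argument for (\ref{EqComparaison2}), a localization/Palm--calculus comparison for (\ref{EqComparaison1}), and a Fatou-type argument combining both to obtain (\ref{EqComparaison3}). Throughout, isotropy of $\P$ reduces us to the $(1,1)$ entries since both $D$ and $\widehat{D}$ are then scalar multiples of $\mathbf{I}_d$, and the same is true of $\E[D_N^\xi \mathbf{I}_d]$ by the symmetry of $\P$ along the first axis.

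For (\ref{EqComparaison2}), write $\gm_N^\xi(\psi_N^\xi)=\frac{1}{\#\gV_N^\xi}\sum_{x\in\gV_N^\xi}\psi_N^\xi(x)$, and split the sum according to whether $x$ belongs to $\Gamma_N$, to the boundary layer $\{x\in\gQ_N^\xi:\,\mathrm{d}(x,\partial C_{2N})\leq r_c\}$, or to the bulk. In the bulk, the construction of $\gGb_N^\xi$ and the cut-off (\ref{EqCutOff1}) ensure $\psi_N^\xi(x)=\widehat{\psi}_{1,1}(\tau_x\xi)$. The cardinalities of $\Gamma_N$ and of the boundary layer are $O(N^{d-1})$, while $\#\gQ_N^\xi$ is of order $N^d$ by the ergodic theorem (via assumption (\ref{ConProcCV})); moreover, $\psi_N^\xi(x)$ is controlled by $\widehat{\psi}_{1,1}(\tau_x\xi)$ plus uniformly integrable boundary contributions thanks to the polynomial moment hypotheses. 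Applying the Nguyen--Zessin / Campbell spatial ergodic theorem to $\widehat{\psi}_{1,1}(\tau_x\xi)$ yields almost-sure convergence of the bulk average to $\E_0[\widehat{\psi}_{1,1}]$, and uniform integrability (using $\E_0[\deg^6]<\infty$ and $\E_0[(\max_{x\sim 0}\|x\|)^4]<\infty$) upgrades it to convergence in expectation.

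The main obstacle is (\ref{EqComparaison1}), because one must compare the semigroups $e^{t\L_N^\xi}$ and $e^{t\widehat{\L}}$, which live on different state spaces. I would rewrite
\[\E\big[\langle \varphi_N^\xi,e^{t\L_N^\xi}\varphi_N^\xi\rangle_{\gm_N^\xi}\big]=\E\big[\bE_N^\xi\big[\varphi_N^\xi(\omega_0)\varphi_N^\xi(\omega_t)\big]\big],\]
condition on $\omega_0=x\in\gV_N^\xi$ chosen uniformly, and restrict to $x$ in the bulk of $\gQ_N^\xi$ (the boundary and $\Gamma_N$ contributions vanish in the limit as in (\ref{EqComparaison2})). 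Via Palm calculus, uniform averaging over $x\in\xi\cap C_{2N}$ is asymptotically $\P_0$-distributed when seen through $\tau_x$. I would then couple the two chains on the same probability space and invoke a localization estimate: because jump rates along $\gE_N^\xi$ have the same structure as those of $\widehat{\L}$ (with $\widehat{c}^\xi$) on the bulk, the walks $\omega_\cdot$ under $\bP_{N,x}^\xi$ and $\hat\xi^0_\cdot$ under $\widehat{\bP}_{\tau_x\xi}$ agree up to time $t$ on the event that the walk does not reach the $r_c$-neighborhood of $\partial C_{2N}$. Using the polynomial moment bound on $\deg_{G(\xi^0)}(0)$ together with a Chebyshev-type argument on the total distance travelled (along the lines of Lemma \ref{lemmMoments}), the probability of this escape event vanishes as $N\to\infty$ for every fixed $t$. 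Uniform integrability of $\varphi_N^\xi(\omega_0)\varphi_N^\xi(\omega_t)$ follows again from the moment hypotheses, which allows passage to the limit and identification with $\langle \widehat{\varphi}_1,e^{t\widehat{\L}}\widehat{\varphi}_1\rangle_{\P_0}$.

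Granting (\ref{EqComparaison2}) and (\ref{EqComparaison1}), the inequality $D\geq\widehat{D}$ is already contained in (\ref{EqCutOff2}) via the variational formula (\ref{EqVaria2}). For the remaining inequality, expand
\[\E\big[D_N^\xi\big]=\E\big[\gm_N^\xi(\psi_N^\xi)\big]-2\int_0^\infty \E\big[\langle \varphi_N^\xi,e^{t\L_N^\xi}\varphi_N^\xi\rangle_{\gm_N^\xi}\big]\,\d t.\]
By self-adjointness and non-positivity of $\L_N^\xi$, the integrands $\E[\langle \varphi_N^\xi,e^{t\L_N^\xi}\varphi_N^\xi\rangle_{\gm_N^\xi}]$ are non-negative, so Fatou's lemma combined with (\ref{EqComparaison1}) gives
\[\liminf_{N\to\infty}\int_0^\infty \E\big[\langle \varphi_N^\xi,e^{t\L_N^\xi}\varphi_N^\xi\rangle_{\gm_N^\xi}\big]\,\d t\geq \int_0^\infty\langle \widehat{\varphi}_1,e^{t\widehat{\L}}\widehat{\varphi}_1\rangle_{\P_0}\,\d t.\]
Together with (\ref{EqComparaison2}) and the expression (\ref{EqCutOff3}) for $\widehat{D}_{1,1}$, this yields $\limsup_{N\to\infty}\E[D_N^\xi]\leq \widehat{D}_{1,1}$; isotropy then promotes this scalar bound to (\ref{EqComparaison3}).
\end{dem}
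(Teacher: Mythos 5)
Your outline follows the paper's proof closely in structure: (\ref{EqComparaison2}) by identifying $\psi_N^\xi(x)$ with $\widehat{\psi}_{1,1}(\tau_x\xi)$ in the bulk $\gQ_{N-r_c}^\xi$, showing the boundary contribution is negligible, and applying a Campbell/Palm ergodic argument; (\ref{EqComparaison1}) by conditioning on the starting site, restricting to a bulk region $C_{2M}$ with $M = N - \lfloor N^\alpha\rfloor$, coupling the periodized walk with the environment process, and controlling the exit event; (\ref{EqComparaison3}) by Fatou on the nonnegative integrand $\langle\varphi_N^\xi,e^{t\L_N^\xi}\varphi_N^\xi\rangle_{\gm_N^\xi}=\Vert e^{t\L_N^\xi/2}\varphi_N^\xi\Vert^2$. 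This is exactly the paper's decomposition into $A_{1,N}^\xi+A_{2,N}^\xi+A_{3,N}^\xi$ and the Fatou step leading to $\widehat{D}\geq(\limsup_N\E[D_N^\xi])\mathbf{I}_d$.

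The one place where you genuinely diverge is the control of the exit event $\{\mathcal{T}_N^\xi\leq t\}$. You propose a Chebyshev-type bound on the distance travelled, relying on the fact that each cut-off jump has length at most $r_c$, so exiting requires at least $\sim N^\alpha/r_c$ jumps, and then invoking polynomial moments of $\deg_{G(\xiz)}(0)$ "along the lines of Lemma \ref{lemmMoments}". Be careful here: with unbounded degrees, bounding $\E\big[n^\xi_*(t)^\gamma\big]$ uniformly in $N$ under $\gm_N^\xi$ is not a direct corollary of Lemma \ref{lemmMoments} (which is a small-$t$ estimate under $\P_0$, not a statement about the walk on $\gG_N^\xi$ started from $\gm_N^\xi$), and the waiting times $T^{\xi}_{i,X_i}$ are not independent of the trajectory. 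The paper sidesteps this in Lemma \ref{LemmeHittingTimes} by a crossing argument: it partitions the boundary annulus $C_{2N-2r_c}\setminus C_{2M}$ into unit cubes, observes that the walk must visit at least $\lfloor N^\alpha\rfloor-r_c$ of them, and uses Markov's inequality on the number of "crowded" cubes so that a constant fraction of crossing times stochastically dominate an exponential of bounded rate, giving an explicit super-polynomially small bound in $N$. Your Chebyshev route can likely be made to work, but it needs a separate lemma of that flavor to handle the degree tails, so at present it is the one step in the plan that is not fully justified. Everything else is sound and matches the paper.
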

\begin{dem}
We first derive (\ref{EqComparaison3}) from (\ref{EqComparaison2}) and (\ref{EqComparaison1}). By the isotropy of $\P_0$, $\widehat{D}$ is a multiple of the identity matrix. Hence, using (\ref{EqCutOff3}) and Fatou's lemma:
\begin{align*}
D\geq \widehat{D}&=\Big(\E_0\big[\widehat{\psi}_{1,1}\big]-2\int_0^\infty \langle \widehat{\varphi}_1,e^{t\widehat{\L}}\widehat{\varphi}_1\rangle_{\P_0}\d t\Big)\mathbf{I}_d\\
&=\Big(\lim_{N\rightarrow \infty}\E\big[\gm_N^\xi(\psi_N^\xi)\big]-2\int_0^\infty \lim_{N\rightarrow \infty}\E\big[\langle \varphi_N^\xi,e^{t\L_N^\xi}\varphi_N^\xi\rangle_{\gm_N^\xi}\big]\d t\Big)\mathbf{I}_d\\
&\geq\Big(\limsup_{N\rightarrow \infty}\E\big[\gm_N^\xi(\psi_N^\xi)\big]-2\liminf_{N\rightarrow \infty}\int_0^\infty \E\big[\langle \varphi_N^\xi,e^{t\L_N^\xi}\varphi_N^\xi\rangle_{\gm_N^\xi}\big]\d t\Big)\mathbf{I}_d\\
&= \Big(\limsup_{N\rightarrow \infty}\E\big[D_N^\xi\big]\Big)\mathbf{I}_d.
\end{align*}

We now prove (\ref{EqComparaison2}). Recall the definition of $\widehat{c}_{x,y}^\xiz$ from (\ref{EqCutOff1}) and note that $\widehat{c}_{x,y}^\xiz=0$ if $x\in \oC_{2N-2r_c}$ and $y\not\in \oC_{2N}$. Thus, for $x\in \gQ_{N-r_c}$:
\begin{align*}
\widehat{\psi}_{1,1}(\tau_x\xi)=\big(e_1\cdot \widehat{\psi}(\tau_x\xi)e_1)&=\sum_{y\in\tau_x\xi}\mathbf{1}_{\{0,y\}\mbox{ is an edge of } G(\tau_x\xi),\,\Vert y\Vert\leq r_c}y_1^2\\
&=\sum_{\tiny\begin{array}{c}
y\in\gV_N^\xi\\ \{x,y\}\in\gE_N^\xi
\end{array}
}\big(y_1-x_1\big)^2=\psi_N^\xi(x).
\end{align*}
Since $x\in \gQ_{N-r_c}$, its neighbors in $\gG_{N}$ are points of $\gQ_{N}$ and thus the last sum runs only on points of $\gQ_{N}$.
This allows us to write
\begin{equation*}
\gm_N^\xi \big(\psi_N^\xi\big)=\frac{1}{\#\big(\xi \cap C_{2N}\big)+\#\Gamma_N}\Bigg(\sum_{x\in\gQ_{N-r_c}^\xi}\widehat{\psi}_{1,1}(\tau_x\xi)+\sum_{x\in\gV_N^\xi\setminus\gQ_{N-r_c}^\xi}\psi_N^\xi(x)\Bigg)
\end{equation*}
and to conclude by taking the $\P$-expectations in this identity and by using Lemma \ref{LemmeUtilPoisson} (\ref{LemmeUtilPoisson3}) and Lemma \ref{LemmMeanBContribNegl} with $h_N^\xi=\psi_N^\xi$. Before moving to the proof of (\ref{EqComparaison1}), let us note that the same method shows that for $1\leq p\leq 4$:
\begin{equation}\label{EqComparaison9}
\lim_{N\rightarrow\infty}\E\big[\gm_N^\xi\big(\vert \varphi_N^\xi\vert^p\big)\big]=\E_0\big[\vert \widehat{\varphi}_1\vert^p\big]\leq r_c^p\E_0\big[\#\big(\xiz\cap B_2(0,r_c\big)^p\big]<\infty.
\end{equation}

In order to  prove (\ref{EqComparaison1}), let us fix $0<\alpha<1$, set $M:=N-\lfloor N^\alpha\rfloor$ and define the exit times
\begin{equation}\label{DefHittingTimes}
\mathcal{T}_N^\xi:=\inf \{t\geq 0:\, \omega_t\not\in C_{2N-2r_c}\},\quad \underline{\omega}=(\omega_t)_{t\geq 0}\in\Omega_N^\xi=D(\RR_+;\gV_N^\xi).
\end{equation}

Since $\big(e^{t\L_N^\xi}\varphi_N^\xi\big)(x)=\bE_{N,x}^\xi\big[\varphi_N^\xi(\omega_t)\big]$, we can write:
\begin{equation*}
\E\big[\langle \varphi_N^\xi,e^{t\L_N^\xi}\varphi_N^\xi\rangle_{\gm_N^\xi}\big]=\E\big[A_{1,N}^\xi+A_{2,N}^\xi+A_{3,N}^\xi\big],
\end{equation*}
where
\begin{align*}
A_{1,N}^\xi &:=\gm_N^\xi\Big(\mathbf{1}_{x\not\in C_{2M}}\varphi_N^\xi(x)\bE_{N,x}^\xi\big[\varphi_N^\xi(\omega_t)\big]\Big),\\
A_{2,N}^\xi &:=\gm_N^\xi\Big(\mathbf{1}_{x\in C_{2M}}\varphi_N^\xi(x)\bE_{N,x}^\xi\big[\mathbf{1}_{\mathcal{T}_N^\xi\leq t}\varphi_N^\xi(\omega_t)\big]\Big),\\
A_{3,N}^\xi &:=\gm_N^\xi\Big(\mathbf{1}_{x\in C_{2M}}\varphi_N^\xi(x)\bE_{N,x}^\xi\big[\mathbf{1}_{\mathcal{T}_N^\xi > t}\varphi_N^\xi(\omega_t)\big]\Big).
\end{align*}
Then (\ref{EqComparaison1}) will be established once we show:
\begin{equation}\label{EqComparaison11}
\lim_{N\rightarrow\infty}\E\big[A_{i,N}^\xi\big]=0,\,i=1,2\mbox{ and }\lim_{N\rightarrow\infty}\E\big[A_{3,N}^\xi\big]=\langle \widehat{\varphi}_1,e^{t\widehat{\L}}\widehat{\varphi}_1\rangle_{\P_0}.
\end{equation}

For the first limit in (\ref{EqComparaison11}), we use the Cauchy-Schwarz inequality and the stationarity of the process $\underline{\omega}$ under $\gm_N^\xi$ to write:
\begin{align*}
\hspace{-0,5cm}\big\vert\E\big[A_{1,N}^\xi\big]\big\vert&\leq \E\Big[\gm_N^\xi\big(\mathbf{1}_{x\not\in C_{2M}}\big\vert\varphi_N^\xi(x)\big\vert\bE_{N,x}^\xi\big[\big\vert\varphi_N^\xi(\omega_t)\big\vert\big]\big)\Big]\\
&\leq \E\Big[\gm_N^\xi\big(\gV_N^\xi\setminus\ C_{2M}\big)\Big]^\frac{1}{2}\E\Big[\gm_N^\xi\big(\varphi_N^\xi(x)^2\bE_{N,x}^\xi\big[\varphi_N^\xi(\omega_t)^2\big]\big)\Big]^\frac{1}{2}\\
&\leq \E\Bigg[\frac{\#\Gamma_N+\#\big(\xi\cap C_{2N}\setminus C_{2M}\big)}{\#\Gamma_N+\#\big(\xi\cap C_{2N}\big)}\Bigg]^\frac{1}{2}\E\Big[\gm_N^\xi\big(\varphi_N^\xi(x)^4\big)\Big]^\frac{1}{4}\E\Big[\gm_N^\xi\big(\bE_{N,x}^\xi\big[\varphi_N^\xi(\omega_t)^4\big]\big)\Big]^\frac{1}{4}\\
&=\E\Bigg[\frac{\#\Gamma_N+\#\big(\xi\cap C_{2N}\setminus C_{2M}\big)}{\#\Gamma_N+\#\big(\xi\cap C_{2N}\big)}\Bigg]^\frac{1}{2}\E\Big[\gm_N^\xi\big(\varphi_N^\xi(x)^4\big)\Big]^\frac{1}{2}.
\end{align*}
The first factor of the r.h.s.\@\xspace goes to 0 thanks to the dominated convergence theorem. Indeed, the quotien is bounded by 1 and goes a.s. to 0 since its numerator is of order $N^{d-1}\lfloor N^\alpha\rfloor$ while its denominator is of order $N^d$. The second factor remains bounded thanks to (\ref{EqComparaison9}). Let us turn our attention to the second limit in (\ref{EqComparaison11}). Applying twice the Cauchy-Schwarz inequality and using stationarity, one has:
\begin{align*}
\hspace{-0,5cm}\big\vert\E\big[A_{2,N}^\xi\big]\big\vert&\leq \E\Big[\gm_N^\xi\big(\mathbf{1}_{x\in C_{2M}}\big\vert\varphi_N^\xi(x)\big\vert\bE_{N,x}^\xi\big[\mathbf{1}_{\mathcal{T}_N^\xi\leq t}\big\vert\varphi_N^\xi(\omega_t)\big\vert\big]\big)\Big]\\
&\leq \E\big[\gm_N^\xi\big(\varphi_N^\xi(x)^2\big)\big]^\frac{1}{2}\E\Big[\gm_N^\xi\big(\mathbf{1}_{x\in C_{2M}}\bE_{N,x}^\xi\big[\mathbf{1}_{\mathcal{T}_N^\xi\leq t}\varphi_N^\xi(\omega_t)^2\big]\big)\Big]^\frac{1}{2}\\
&\leq \E\big[\gm_N^\xi\big(\varphi_N^\xi(x)^2\big)\big]^\frac{1}{2}\E\big[\gm_N^\xi\big(\bE_{N,x}^\xi\big[\varphi_N^\xi(x)^4\big]\big)\big]^\frac{1}{4}\E\Big[\gm_N^\xi\big(\mathbf{1}_{x\in C_{2M}}\bP_{N,x}^\xi\big[\mathcal{T}_N^\xi\leq t\big]\big)\Big]^\frac{1}{4}\\
&= \E\big[\gm_N^\xi\big(\varphi_N^\xi(x)^2\big)\big]^\frac{1}{2}\E\big[\gm_N^\xi\big(\varphi_N^\xi(x)^4\big)\big]^\frac{1}{4}\E\Big[\gm_N^\xi\big(\mathbf{1}_{x\in C_{2M}}\bP_{N,x}^\xi\big[\mathcal{T}_N^\xi\leq t\big]\big)\Big]^\frac{1}{4}.
\end{align*}
Once again, thanks to (\ref{EqComparaison9}), the first two terms are uniformly bounded. The last factor goes to 0 by Lemma \ref{LemmeHittingTimes}.

To deal with the last limit in (\ref{EqComparaison11}), we need to introduce the following exit times:  
\begin{equation}\label{DefHittingTimesEnvironment}
\mathcal{T}_{N,x}:=\inf \{t\geq 0:\, x+X_t(\underline{\hat{\xiz}})\not\in C_{2N-2r_c}\},\quad x\in \xiz\cap C_{2M},\,\underline{\hat{\xiz}}=(\hat{\xi}_t^0)_{t\geq 0}\in \Xi^0,
\end{equation}
with $X_t(\underline{\hat{\xiz}})$ as defined in (\ref{deffamcov}) and $\Xi^0$ as defined in Subsection \ref{PointDeVueDeLaparticle}.

Note that, if $N$ is large enough ({\it i.e.\@\xspace} $\lfloor N^\alpha\rfloor>r_c$), one has for $x\in C_{2M}$:
\[\varphi_N^\xi (x)=\widehat{\varphi}_1(\tau_x\xi)\mbox{ and }\bE_{N,x}^\xi\big[\mathbf{1}_{\mathcal{T}_N^\xi >t}\varphi_N^\xi (\omega_t)\big]=\widehat{\bE}_{\tau_x\xi}\big[\mathbf{1}_{\mathcal{T}_{N,x} >t}\widehat{\varphi}_1 (\hat{\xi}^0_t)\big].\]

Hence,
\begin{equation*}
\E\big[A_{3,N}^\xi\big]=\E\Big[\gm_N^\xi\Big(\mathbf{1}_{x\in C_{2M}}\widehat{\varphi}_1 (\tau_x\xi)\widehat{\bE}_{\tau_x\xi}\big[\mathbf{1}_{\mathcal{T}_{N,x} >t}\widehat{\varphi}_1 (\hat{\xi}^0_t)\big]\Big)\Big].
\end{equation*}

In the same spirit as what we did to obtain $\lim_{N\rightarrow\infty}\E[A_{2,N}^\xi]=0$, we get using Lemma \ref{LemmeHittingTimes} (\ref{EqLemmeHittinTimes2}):
\begin{equation*}
\lim_{N\rightarrow\infty}\E\Big[\gm_N^\xi\Big(\mathbf{1}_{x\in C_{2M}}\widehat{\varphi}_1 (\tau_x\xi)\widehat{\bE}_{\tau_x\xi}\big[\mathbf{1}_{\mathcal{T}_{N,x} \leq t}\widehat{\varphi}_1 (\hat{\xi}^0_t)\big]\Big)\Big]=0,
\end{equation*}
so that
\begin{equation*}
\lim_{N\rightarrow\infty}\E\big[A_{3,N}^\xi\big]=\lim_{N\rightarrow\infty}\E\Big[\gm_N^\xi\Big(\mathbf{1}_{x\in C_{2M}}\widehat{\varphi}_1 (\tau_x\xi)\widehat{\bE}_{\tau_x\xi}\big[\widehat{\varphi}_1 (\hat{\xi}^0_t)\big]\Big)\Big].
\end{equation*}

Finally, we apply Lemma \ref{LemmeUtilPoisson} (\ref{LemmeUtilPoisson4}) the function
\[h(\xiz):=\widehat{\varphi}_1 (\xiz)\widehat{\bE}_{\xiz}\big[\widehat{\varphi}_1 (\hat{\xi}^0_t)\big]=\widehat{\varphi}_1 (\xiz)\Big(e^{t\widehat{\L}}\widehat{\varphi}_1\Big) (\xiz).\] 
This function is in $L^2(\N_0,\P_0)$ since the process is stationary w.r.t.\@\xspace $\widehat{\bE}$ and  $\operatorname{deg}_{G(\xiz)}(0)$ admits polynomial moments under $\P_0$.
This gives
\begin{equation*}
\lim_{N\rightarrow\infty}\E\big[A_{3,N}^\xi\big]=\lim_{N\rightarrow\infty}\E\Big[\gm_N^\xi\Big(\mathbf{1}_{x\in C_{2M}}\widehat{\varphi}_1 (\tau_x\xi)\widehat{\bE}_{\tau_x\xi}\big[\widehat{\varphi}_1 (\hat{\xi}^0_t)\big]\Big)\Big]=\langle\widehat{\varphi}_1,e^{t\widehat{\L}}\widehat{\varphi}_1\rangle_{\P_0}.
\end{equation*}
\end{dem} 

\subsubsection{Relationship between $D_N^\xi$ and the effective conductance of suitable resistor networks}\label{DiffusionConductivity} We want to establish a link between the diffusion constant $D_N^\xi$ and the effective conductance of an appropriate electrical network. To this end, recall definitions of Subsection \ref{PeriodizedMedium} and equip edges of $\gGb_N^\xi$ with conductances:
\begin{equation*}
\gcb_N^\xi(x,y):=\gc_N^\xi(\gp (x),\gp (y)), \quad \{x,y\}\in\gEb_N^\xi,
\end{equation*}
where $\gc_N^\xi$ is given by (\ref{EqCondPeriodiques}) and $\gp$ is the projection map from $\gVb_N^\xi$ onto $\gV_N^\xi$. Then, the effective conductivity $\overline{\kappa}_N^\xi$ of this electrical network is defined as the total amount of current flowing from $\Gamma_N^-$ to $\Gamma_N^+$ when a unit potential difference is imposed between these two sets. From Dirichlet's principle (see \cite[\S 2.4]{Lyons} for example), we know that: 
\begin{equation}\label{EqDiffusioConductivity2}
\overline{\kappa}_N^\xi=\frac{1}{2}\inf \Bigg\{\sum_{x\in\gVb_N^\xi}\sum_{y:\,
\{x,y\}\in\gEb_N^\xi}
\gcb_N^\xi(x,y)\big(\nabla_{(x,y)}g\big)^2:\,g:\gVb_N^\xi\mapsto \RR,\,g_{\vert\Gamma_N^-}\equiv 0,\, g_{\vert\Gamma_N^+}\equiv 1 \Bigg\}
\end{equation}
where $\nabla_{(x,y)}g:=g(y)-g(x)$.

The same method as the one used in Subsection \ref{FormuleVariationnelle} shows that the diffusion coefficient $D_N^\xi$ (defined in Lemma \ref{LemmPeriodizedMedium}) satisfies the variational formula:
\begin{align}\label{EqDiffusionConductivity3}
D_N^\xi&=\frac{1}{\#\gV_N^\xi}\inf_{f\, :\, \gV_N^\xi\rightarrow \RR}\Bigg\{\sum_{x\in\gV_N^\xi}\sum_{y:\,\{x,y\}\in\gE_N^\xi}\gc_N^\xi(x,y)\big(d_1(x,y)+\nabla_{(x,y)}f\big)^2\Bigg\} \nonumber\\
&=\frac{1}{\#\gV_N^\xi}\inf_{\tiny
\begin{array}{c}
f:\gVb_N^\xi\rightarrow \RR\\
f\text{ constant on }\gp^{-1}(\{x\}),\forall x\in\Gamma_N
\end{array}}
\Bigg\{\sum_{x\in\gVb_N^\xi}\sum_{y:\,\{x,y\}\in\gEb_N^\xi}\gcb_N^\xi(x,y)\big(\nabla_{(x,y)}(\pi_1+f)\big)^2\Bigg\},
\end{align}  
where $\pi_1$ is the projection along the first coordinate. 

We claim that:
\begin{equation}\label{EqDiffusionConductivity4}
D_N^\xi=\frac{1}{\#\gV_N^\xi}\inf_{\tiny
\begin{array}{c}
f:\gVb_N^\xi\rightarrow \RR\\
f_{\vert\Gamma_N^-\cup\Gamma_N^+}=N
\end{array}}
\Bigg\{\sum_{x\in\gVb_N^\xi}\sum_{y:\,\{x,y\}\in\gEb_N^\xi}\gcb_N^\xi(x,y)\big(\nabla_{(x,y)}(\pi_1+f)\big)^2\Bigg\}.
\end{equation}

To see this, let us fix the values of such a function $f$ on $\mathfrak{Q}_N^\xi$, say $f_{\vert\mathfrak{Q}_N^\xi}=h_{\vert\mathfrak{Q}_N^\xi}$. We will verify that the function minimizing (\ref{EqDiffusionConductivity3}) subject to this condition is actually constant on $\Gamma_N^-\cup\Gamma_N^+$. Note that, when the values of $f$ on $\gQ_N^\xi$ are fixed,  we know the value of:
\[\sum_{x\in\gQ_N^\xi}\sum_{y\in\gQ_N^\xi}\mathbf{1}_{\{x,y\}\in\gEb_N^\xi}\big(\nabla_{(x,y)}(\pi_1+f)\big)^2,\]
and, writing $C_x$ for the value of $f$ on $\mathfrak{p}^{-1}(\{x\})$, $x\in \Gamma_N$, we are led to minimize in $C_x$:
\[\sum_{x\in\Gamma_N}\Bigg\{\sum_{y\in B_N^{\xi,-}}\big(y_1+N+h(y)-C_x\big)^2+\sum_{y\in B_N^{\xi,+}}\big(y_1-N+h(y)-C_x\big)^2\Bigg\}.\]
The summand depends on $x$ only through $C_x$ and we must choose $C_x$ as the (unique) minimizer of this quantity for each $x\in\Gamma_N$. Thus the second infimum in (\ref{EqDiffusionConductivity3}) is achieved for functions which are constant on $\Gamma_N^-\cup\Gamma_N^+$. Since $f$ appears in the variational formula in a gradient form, one can assume that $f$ is equal to $N$ on $\Gamma_N^-\cup\Gamma_N^+$ and (\ref{EqDiffusionConductivity4}) is proved.

Now, let us note that the functions $g:\gVb_N^\xi\mapsto \RR$ satisfying $g_{\vert\Gamma_N^-}\equiv 0$ and $g_{\vert\Gamma_N^+}\equiv 1$ can be expressed as:
\[g=\frac{1}{2N}\big(\pi_1+f\big),\]
where $f$ is a function on $\gVb_N^\xi$ such that $f_{\vert\Gamma_N^-\cup\Gamma_N^+}\equiv N$ and {\it vice versa}. Hence, 
\begin{equation*}
D_N^\xi=\frac{4N^2}{\# \big(\gV_N^\xi\big)}\inf \Bigg\{\sum_{x\in\gVb_N^\xi}\sum_{y:\,
\{x,y\}\in\gEb_N^\xi}
\gcb_N^\xi(x,y)\big(\nabla_{(x,y)}g\big)^2:\,g:\gVb_N^\xi\mapsto \RR,\,g_{\vert\Gamma_N^-}\equiv 0,\, g_{\vert\Gamma_N^+}\equiv 1 \Bigg\}.
\end{equation*}

Thanks to Dirichlet's principle (\ref{EqDiffusioConductivity2}), we have just proved:
\begin{lemm}\label{LemmeDiffusionConductivity}
\begin{equation*}\label{EqDiffusionConductivity5}
D_N^\xi=\frac{8N^2}{\# \big(\gV_N^\xi\big)}\overline{\kappa}_N^\xi.
\end{equation*}
\end{lemm}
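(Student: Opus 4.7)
The plan is to combine the variational formula (\ref{EqDiffusionConductivity4}) for $D_N^\xi$ with Dirichlet's principle (\ref{EqDiffusioConductivity2}) for $\overline{\kappa}_N^\xi$ via the affine change of variables already isolated in the paragraph preceding the lemma. All nontrivial analytic work has been carried out above; the lemma should reduce to explicit bookkeeping of two constants, namely the factor $(2N)^2$ coming from the rescaling and the factor $1/2$ appearing in Dirichlet's principle.

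First I would verify explicitly that the map $f \mapsto g := (2N)^{-1}(\pi_1 + f)$ is an affine bijection between the admissible test functions for (\ref{EqDiffusionConductivity4}), that is, $f : \gVb_N^\xi \to \RR$ with $f_{|\Gamma_N^- \cup \Gamma_N^+} \equiv N$, and the admissible test functions for (\ref{EqDiffusioConductivity2}), that is, $g : \gVb_N^\xi \to \RR$ with $g_{|\Gamma_N^-} \equiv 0$ and $g_{|\Gamma_N^+} \equiv 1$. This is immediate from the fact that every vertex of $\Gamma_N^\pm$ has first coordinate $\pm N$, so the boundary values transform in the required way; the inverse map is $g \mapsto 2N g - \pi_1$.

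Next I would note that the identity $\pi_1 + f = 2N g$ gives $\nabla_{(x,y)}(\pi_1 + f) = 2N\,\nabla_{(x,y)} g$ on every oriented edge, so each term of the Dirichlet-type sum in (\ref{EqDiffusionConductivity4}) is multiplied by $(2N)^2 = 4N^2$ under the bijection. Substituting into (\ref{EqDiffusionConductivity4}) yields
$$D_N^\xi = \frac{4N^2}{\#\gV_N^\xi}\, \inf\Bigg\{\sum_{x \in \gVb_N^\xi} \sum_{y:\,\{x,y\}\in \gEb_N^\xi} \gcb_N^\xi(x,y)(\nabla_{(x,y)} g)^2 : g_{|\Gamma_N^-} \equiv 0,\; g_{|\Gamma_N^+} \equiv 1 \Bigg\}.$$

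Finally, the infimum appearing on the right is exactly twice the infimum in (\ref{EqDiffusioConductivity2}) (or equivalently, Dirichlet's principle identifies this infimum with $2\overline{\kappa}_N^\xi$, the factor $2$ absorbing the $1/2$ that compensates for each undirected edge being counted twice in the double sum). Collecting,
$$D_N^\xi = \frac{4N^2}{\#\gV_N^\xi}\cdot 2\overline{\kappa}_N^\xi = \frac{8N^2}{\#\gV_N^\xi}\overline{\kappa}_N^\xi,$$
which is the claim. I do not anticipate any obstacle: the only potential pitfall is a miscount of factors of $2$ coming from symmetrizing over oriented versus unoriented edges, which I would handle by keeping the double-sum convention uniform throughout the comparison.
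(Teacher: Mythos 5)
Your proposal is correct and is essentially identical to the paper's own argument: the affine bijection $f \mapsto g = (2N)^{-1}(\pi_1 + f)$ between the two admissible classes, the resulting factor $(2N)^2$ in the Dirichlet sum, and the identification of that infimum with $2\overline{\kappa}_N^\xi$ via Dirichlet's principle (\ref{EqDiffusioConductivity2}). You also correctly pin down the origin of the final factor of $2$ as the $\sfrac{1}{2}$ in (\ref{EqDiffusioConductivity2}) that compensates for the double counting of unoriented edges in the double sum.
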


Lower bounds on $\overline{\kappa}_N^\xi$ are obtained in Subsection \ref{PositiveConductivity}

\subsection{Lower bound on $\overline{\kappa}_N^\xi$}\label{PositiveConductivity}

In order to obtain lower bounds on $\overline{\kappa}_N^\xi$ for large $N$, we use the discretization of the space and the `\emph{good boxes}' introduced in Proposition \ref{PropNonDege}. These `good boxes' will allow us to link `nice properties' of the random graphs we consider with `nice properties' of independent Bernoulli processes on $\ZZ^d$. The rest of the proof then relies on techniques from percolation theory which also appear in \cite[Section 6]{FSS}. This method is actually quite classical and is partially described in the 2-dimensional case in \cite[Chapter 11]{KestenPerco}.
\subsubsection{Percolation estimates and LR-crossings}\label{PercolationEstimates}
Let us consider an independent Bernoulli site percolation process $\YY=\{Y_\z,\,\z\in\ZZ^d\}$ on $\ZZ^d$ with parameter $p>p^\text{site}_c(\ZZ^2)$. We will use fine estimates about the number of left-right crossings (LR-crossings) of $R_{2N,2(N-1)}:=[-N,N]\times [-N+1,N-1]^{d-1}$. Let us recall that a LR-crossing with length $k-1$ of $R_{2N,2(N-1)}$ for a realization of $\{Y_\z,\,\z\in\ZZ^d\}$ is a sequence of distinct points $(\z_1, \dots, \z_k)$ in $R_{2N,2(N-1)}\cap \ZZ^d$ such that $|\z_i-\z_{i+1}|=1$ for $1\leq i\leq k-1$, $Y_{\z_i}=1$ for $1\leq i \leq k$, $\z_1^{(1)}=-N$, $\z_k^{(1)}=N$, $-N<\z_i^{(1)}<N$ for $1<i<k$, and $\z_i^{(l)}=\z_j^{(l)}$ for any $l\geq 3$ and for $1\leq i<j\leq k$. Two crossings are said to be disjoint if all the involved $\z_j$s are  distinct. Note that, by definition, LR-crossings are contained into
$2$-dimensional slices. The techniques of \cite[Sections 2.6 and 11.3]{Grimmett} can be used, in the context of site
percolation, to derive:
\begin{prop}
If $p>p_c(2)$, there exist positive constants $a=a(p) $, $b=b(p)$, and $c=c(p)$ such that for all $N\in \NN^*$:
\begin{equation}\label{EqLRCrossing}  
\PP \big[ \YY \mbox{ has less than }b N^{d-1} \mbox{ disjoint LR-crossings in } R_{2N,2(N-1)}\big]\leq ce^{-a N}.
\end{equation}
\end{prop}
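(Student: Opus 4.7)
My plan is to reduce to classical two-dimensional supercritical site percolation and then exploit the product structure of $R_{2N,2(N-1)}$. First, by the very definition of a LR-crossing (which forces the coordinates $l\ge 3$ to be constant along the path), any such crossing is contained in a two-dimensional affine slab
\[
S_\w := \bigl([-N,N]\times[-N+1,N-1]\times\{w_3\}\times\cdots\times\{w_d\}\bigr)\cap \ZZ^d,
\]
indexed by $\w=(w_3,\dots,w_d)\in\{-N+1,\dots,N-1\}^{d-2}$. Since these slabs are pairwise disjoint as subsets of $\ZZ^d$, the restrictions of the Bernoulli field $\YY$ to different slabs are independent, and the family of disjoint LR-crossings of $R_{2N,2(N-1)}$ decomposes as the disjoint union over $\w$ of the disjoint LR-crossings of $S_\w$.

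The core step is to establish the two-dimensional version of the bound: for $p>p^\text{site}_c(\ZZ^2)$, there exist constants $a',b',c'>0$ such that the number $N_\w$ of disjoint LR-crossings of any individual slab $S_\w$ satisfies $\PP[N_\w<b'N]\le c' e^{-a'N}$. This is essentially classical and follows the scheme developed in \cite[Sections 2.6 and 11.3]{Grimmett}: Russo--Seymour--Welsh type arguments combined with duality on the matching lattice and the exponential decay of subcritical dual connectivities (which holds above $p^\text{site}_c(\ZZ^2)$ in the site model as well) first show that a 2D rectangle of width of order $N$ admits an open LR-crossing with probability exponentially close to $1$. The BK inequality then allows one to iterate, producing a linearly-growing number of \emph{disjoint} crossings with an exponentially small failure probability.

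Once the 2D estimate is secured, I combine over the $(2N-1)^{d-2}$ slabs using their independence and a union bound:
\[
\PP\bigl[\,\exists\,\w:\,N_\w<b'N\,\bigr]\le (2N-1)^{d-2}\,c'e^{-a'N}\le c\,e^{-aN}
\]
for suitable constants $a,c>0$, the polynomial prefactor being absorbed into a slightly smaller exponential rate. On the complementary event, the total number of disjoint LR-crossings of $R_{2N,2(N-1)}$ is at least $b'N\cdot(2N-1)^{d-2}\ge b N^{d-1}$ for a suitable $b>0$ and all $N$ large enough; the finitely many small values of $N$ are handled by enlarging the prefactor $c$. The main obstacle is concentrated in the two-dimensional step: the slicing and union bound are bookkeeping, but the quantitative lower bound on the number of disjoint LR-crossings in 2D supercritical site percolation rests on the full Grimmett--Aizenman--Menshikov circle of ideas cited by the authors, which is the only non-trivial ingredient.
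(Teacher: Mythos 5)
Your proof follows exactly the same route the paper intends: the paper itself just says ``the techniques of \cite[Sections 2.6 and 11.3]{Grimmett} can be used in the context of site percolation,'' and the slicing into $(2N-1)^{d-2}$ independent two-dimensional slabs, applying the classical 2D supercritical crossing estimate to each, and absorbing the polynomial prefactor from the union bound into the exponential is precisely what is needed to pass from $d=2$ to general $d$. The bookkeeping (the definition of the $S_\w$, their disjointness and hence independence under the i.i.d.\ field $\YY$, the inequality $b'N(2N-1)^{d-2}\geq b N^{d-1}$) is all correct.

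One small inaccuracy in your sketch of the two-dimensional ingredient: the BK inequality bounds $\PP[A\circ B]\leq\PP[A]\PP[B]$ from \emph{above}, so it cannot by itself produce a lower bound on the number of disjoint crossings. In Grimmett's Lemma 11.22, the linear lower bound on the number of edge-disjoint crossings is obtained via planar duality together with Menger's theorem (or max-flow/min-cut): few disjoint open crossings force a top--bottom dual path traversing few open primal edges, and the exponential decay of subcritical dual connectivities makes such a ``mostly closed'' dual path exponentially unlikely. Since you anyway defer the 2D step to the same reference that the paper cites, this misattribution does not affect the validity of the overall argument, but the mechanism producing many disjoint crossings is duality/Menger plus exponential decay, not BK.
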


Thanks to the domination assumption (\ref{CondNonDege3}) in Proposition \ref{PropNonDege}, there exists a coupling of the process of the good boxes $\XX$ and the supercritical independent Benoulli procces $\YY$ on $\ZZ^d$ such that almost surely $Y_\z\leq X_\z$ for all $\z\in\ZZ^d$. In other words, any good box corresponds almost surely to an open site in the percolation process $\YY$.

From now on, we fix $r_c:=\sqrt{d+3}K$. Consider a  LR-crossing $\gamma^\mathrm{perco}=(\z_1,\dots ,\z_k)$ of $R_{2N,2(N-1)}$. One can associate to $\gamma^\mathrm{perco}$ a LR-crossing $\gamma^{\overline{\mathfrak{G}}_{KN}^\xi}$ of $\gGb_{KN}^\xi$ from a point of $\Gamma_{KN}^-$ to a point of $\Gamma_{KN}^+$. To construct this LR-crossing, we first link $K\z_1\in\Gamma^-_{KN}$ to the reference vertex $v_{\z_2}$ of $B_{\z_2}$. Since $v_{\z_2}\in B_{\z_2}$, $\Vert K{\z_1}-v_{\z_2}\Vert \leq r_c$ and $\{Kz_1,v_{\z_2}\}$ is an edge of $\gGb_{KN}^\xi$. Next, for $i=2,\dots,k-2$, we go from $v_{\z_i}$ to $v_{\z_{i+1}}$ using the path given in (\ref{CondNonDege2}) of the definition of the good boxes in Proposition \ref{PropNonDege}. These edges are edges of $\gGb_{KN}^\xi$ thanks to the choice of $r_c$.  Finally, we link $v_{\z_{k-1}}$ to $K\z_k\in\Gamma_{KN}^+$. We assume that $\gamma^{\overline{\mathfrak{G}}_{KN}^\xi}$ is self-avoiding; if not, we remove loops. Since the boxes $B_{\z_1},\dots, B_{\z_k}$ are good, it is easy to see that:
\begin{itemize}
\item[(a)] $\operatorname{L}(\gamma^{\overline{\mathfrak{G}}_{KN}^\xi})\leq L\times \operatorname{L}(\gamma^\mathrm{perco})$ where $\operatorname{L}(\cdot)$ denotes the graph length of a path,
\item[(b)] $\gamma^{\overline{\mathfrak{G}}_{KN}^\xi}\subset \bigcup_{\z\in\gamma^\mathrm{perco}}\big(B_\z\cap C_{2KN}\big)$,
\end{itemize}
in particular,
\begin{itemize}
\item[(c)] if $\gamma^\mathrm{perco}_1$ and $\gamma^\mathrm{perco}_2$ are two disjoint LR-crossings of $R_{2N,2(N-1)}$ for $\mathbb{X}$, then $\gamma^{\overline{\mathfrak{G}}_{KN}^\xi}_1$ and $\gamma^{\overline{\mathfrak{G}}_{KN}^\xi}_2$ are disjoint LR-crossings of $\gGb_{N}^\xi$. 
\end{itemize}
\subsubsection{Lower bound for $\overline{\kappa}_{KN}^\xi$ for $N$-good configurations}
We say that a configuration $\xi\in\N$ is \emph{$N$-good} if the corresponding realization of the percolation process $\YY$ has at least $bN^{d-1}$ disjoint LR-crossings of $R_{2N,2(N-1)}$. From now on, we assume that $\xi$ is an $N$-good configuration and we write $\mathcal{C}_N(\xi)$ for a collection of at least $bN^{d-1}$ such disjoint crossings. By the arguments given at the end of Subsection \ref{PercolationEstimates}, there are at least $bN^{d-1}$ LR-crossings of $C_{2KN}$ in the graph $\gGb_N^\xi$. We write as before $\gamma^{\overline{\mathfrak{G}}_{KN}^\xi}$ for the LR-crossing in $\gGb_N^\xi$ associated with the LR-crossing $\gamma^\mathrm{perco}$ of $R_{2N,2(N-1)}$ for $\YY$. Since the paths are assumed to be self-avoiding and the $\gamma^\mathrm{perco}$s in $\mathcal{C}_N(\xi)$ are disjoints, it holds that:
\[\sum_{\gamma^\mathrm{perco}\in\mathcal{C}_N(\xi)}\operatorname{L}(\gamma^{\overline{\mathfrak{G}}_{KN}^\xi})\leq L\sum_{\gamma^\mathrm{perco}\in\mathcal{C}_N(\xi)}L(\gamma^\mathrm{perco})\leq L(2N+1)^d.\]
 With Jensen's inequality, one can deduce:
\begin{equation}\label{EqLowerboundGood1}
\sum_{\gamma^\mathrm{perco}\in\mathcal{C}_N(\xi)}\frac{1}{\operatorname{L}(\gamma^{\overline{\mathfrak{G}}_{KN}^\xi})}\geq\frac{\#\big(\mathcal{C}_N(\xi)\big)^2}{\sum_{\gamma^\mathrm{perco}\in\mathcal{C}_N(\xi)}\operatorname{L}(\gamma^{\overline{\mathfrak{G}}_{KN}^\xi})}\geq\frac{\big(bN^{d-1}\big)^2}{L(2N+1)^d}\geq \const{10} (KN)^{d-2},
\end{equation}
with $\const{10}=\const{10}(d,K)$.

We are now ready to bound from below $\overline{\kappa}_{KN}^\xi$. To this end, let us introduce another electrical network $\check{\gG}_{KN}^\xi$ with vertex set $\check{\gV}_{KN}^\xi:=\gQ_{KN}^\xi\cup\{\check{\Gamma}_{KN}^-,\check{\Gamma}_{KN}^+\}$, where $\check{\Gamma}_{KN}^-,\check{\Gamma}_{KN}^+$ are two extra points. Its edge set $\check{\gE}_{KN}^\xi$ consists in the collection of edges of $\gEb_{KN}^\xi$ with both endpoints in $\gQ_{KN}^\xi$ and extra edges between $\check{\Gamma}_{KN}^\pm$ and each point of $B_{KN}^{\xi,\pm}$. We assign a conductance 1 to each of these edges and  we define the effective conductance $\check{\kappa}_{KN}^\xi$ of $\check{\gG}_{KN}^\xi$ as the total amount of current flowing from $\check{\Gamma}_{KN}^-$ to $\check{\Gamma}_{KN}^+$ when a unit difference of potential is imposed. This new network can be seen as obtained from $\gGb_{KN}^\xi$ by merging together vertices in $\Gamma_{KN}^-$ and $\Gamma_{KN}^+$ respectively. Since the same difference of potential was imposed between $\Gamma_{KN}^-$ and $\Gamma_{KN}^+$ in the definition of $\overline{\kappa}_{KN}^\xi$, it is not difficult to see that $\check{\kappa}_{KN}^\xi=\overline{\kappa}_{KN}^\xi$. 

The disjoint LR-crossing $\gamma^{\overline{\mathfrak{G}}_{KN}^\xi}$ of $\gGb_{KN}^\xi$ exhibited above can be used to bound $\check{\kappa}_{KN}^\xi$ from below. They correspond to disjoint (except at their endpoints) paths from $\Gamma_{KN}^-$ to $\Gamma_{KN}^+$ in the new network. By \emph{Rayleigh monotonicity principle}, one can obtain a lower bound on $\check{\kappa}_{KN}^\xi$ by removing each edge which does not appear in these crossings. Thanks to the \emph{series law} each of these crossings $\gamma^{\overline{\mathfrak{G}}_{KN}^\xi}$ has `total' resistance $\operatorname{L}(\gamma^{\overline{\mathfrak{G}}_{KN}^\xi})$. Finally, due to \emph{parallels law} and (\ref{EqLowerboundGood1}), one has:
\begin{equation}\label{EqLowerboundGood2}
\check{\kappa}_{KN}^\xi=\overline{\kappa}_{KN}^\xi\geq \sum_{\gamma^\mathrm{perco}\in\mathcal{C}_N(\xi)}\frac{1}{\operatorname{L}(\gamma^{\overline{\mathfrak{G}}_{KN}^\xi})}\geq \const{10} (KN)^{d-2}.
\end{equation}
\subsection{Conclusion by collecting bounds}\label{Conclusion}
Due to (\ref{EqComparaison3}), it remains to show that:
\begin{equation*}
\limsup_{N\rightarrow \infty}\E\big[D_N^\xi\big]>0.
\end{equation*}

But, with Lemma \ref{LemmeDiffusionConductivity} and (\ref{EqLowerboundGood2}), we obtain:
\begin{align*}
\hspace{-0,3cm}\limsup_{N\rightarrow \infty}\E\big[D_N^\xi\big]&\geq 8K^2\limsup_{N\rightarrow \infty}\E\Bigg[\frac{N^2}{\#\big(\gV_{KN}^\xi\big)}\overline{\kappa}_{KN}^\xi\Bigg]\nonumber\geq \const{11}\lim_{N\rightarrow \infty}\E\Bigg[\frac{(KN)^d}{\#\big(\gV_{KN}^\xi\big)}\mathbf{1}_{\xi\mbox{ is }N\mbox{-good}}\Bigg],
\end{align*}
where we used that:
\[\E\Bigg[\frac{(KN)^2}{\#\big(\gV_{KN}^\xi\big)}\overline{\kappa}_{KN}^\xi\mathbf{1}_{\xi\mbox{ is }N\mbox{-bad}}\Bigg]\geq 0.\]

Observe that thanks to (\ref{EqLRCrossing}):
\begin{align*}
\E\Bigg[\frac{(KN)^d}{\#\big(\gV_{KN}^\xi\big)}\mathbf{1}_{\xi\mbox{ is }N\mbox{-bad}}\Bigg]&=\E\Bigg[\frac{(KN)^d}{\#\big(\xi\cap C_{2KN}\big)+\#\big(\Gamma_{2KN}\big)}\mathbf{1}_{\xi\mbox{ is }N\mbox{-bad}}\Bigg]\nonumber\\
&\leq \const{12}N^d\P\big[\xi\mbox{ is }N\mbox{-bad}\big]\leq \const{13}N^de^{-aN}\xrightarrow[\,N\rightarrow\infty\,]{}0.
\end{align*}

Thus, we obtain that:
\begin{equation*}
\limsup_{N\rightarrow \infty}\E\big[D_N^\xi\big]\geq \const{11}\lim_{N\rightarrow \infty}\E\Bigg[\frac{(KN)^d}{\#\big(\gV_{KN}^\xi\big)}\Bigg]>0
\end{equation*}
thanks to Lemma \ref{LemmeUtilPoisson} (\ref{LemmeUtilPoisson1}). Hence, the diffusion coefficient of the limiting Brownian motion in Proposition \ref{PropCV} is nondegenerate and Proposition \ref{PropNonDege} is proved.

\section{Derivation of Theorem \ref{ThprincAIP} from Propositions \ref{PropCV} and \ref{PropNonDege}}\label{DerivTh1}
In this section, we show that the hypotheses listed in Subsection \ref{condprocAIP} imply the assumptions of Propositions \ref{PropCV} and \ref{PropNonDege} in the cases of Delaunay triangulations, Gabriel graphs and \emph{creek-crossing} graphs. Theorem \ref{ThprincAIP} then immediately follows from these propositions.
\subsection{Polynomial moments for $\deg_{\DT(\xi^0)}(0)$ and $\max_{x\sim 0 \text{ in }\DT(\xi^0)}(0)$ under $\P_0$}\label{AppMoments}
In order to apply Proposition \ref{PropCV} to random walks on Delaunay triangulations, Gabriel graphs and \emph{creek-crossing} graphs, we only need to show that the assumptions of Subsection \ref{condprocAIP} imply that $\deg_{G(\xi^0)}(0)$ and $\max_{x\sim 0}(0)$ admit polynomial moment of any order under $\P_0$. Since the Gabriel graph and the \emph{creek-crossing} graphs are sub-graphs of the Delaunay triangulation, it is enough to do so in the case of Delaunay triangulations.

\begin{lemm}\label{LemmMomentsDegree}
If the point process satisfies {\bf (V')} and {\bf (D')}, then $\deg_{\DT(\xi^0)}(0)$ and $\max_{x\sim 0 \text{ in }\DT(\xi^0)}(0)$ admit polynomial moments of any order under $\P_0$.
\end{lemm}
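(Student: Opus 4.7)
The plan is first to obtain a stretched-exponential tail estimate for $M:=\max_{x\sim 0\text{ in }\DT(\xiz)}\Vert x\Vert$ from {\bf (V')} and the empty-ball characterization of Delaunay edges, and then to deduce polynomial moments for $\deg_{\DT(\xiz)}(0)$ by bounding it by $\#(\xiz\cap B(0,M))$ and invoking {\bf (D')}.

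For the tail of $M$, recall that $\{0,x\}$ is an edge of $\DT(\xiz)$ iff there exist $d-1$ further points of $\xiz$ lying with $0$ and $x$ on a sphere whose open interior contains no point of $\xiz$; this sphere has radius $\rho\geq \Vert x\Vert/2$ since $0$ and $x$ both lie on it. On the event $\{M>R\}$ one thus produces an open ball $B(c,\rho)$ with $\rho\geq R/2$, with $0\in\partial B(c,\rho)$ and with $\xiz\cap B(c,\rho)=\emptyset$. Writing $c=\rho\hat u$ for some $\hat u\in\SS^{d-1}$, an elementary triangle inequality shows that $B\bigl((R/2)\hat u,\,R/4\bigr)\subset B(c,\rho)$ (hence is also empty) as soon as $\rho\geq R/2$. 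Fixing a $\tfrac{1}{10}$-net $(\hat u_i)_{i=1}^{N(d)}$ of $\SS^{d-1}$, the event $\{M>R\}$ therefore forces, for at least one index $i$, an axis-aligned box $Q_i$ of side of order $R$ centered at $(R/2)\hat u_i$ to contain no point of $\xiz$; a union bound together with {\bf (V')} yields, for $R$ large enough,
\[\P_0[M>R]\leq N(d)\,e^{-\kappa R^d},\]
so $M$ has polynomial (in fact exponential) moments of every order under $\P_0$.

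For the degree, since every Delaunay neighbor of the origin lies in $B(0,M)$, we have $\deg_{\DT(\xiz)}(0)\leq \#(\xiz\cap B(0,M))$. Decomposing according to the integer part of $M$ and applying Cauchy--Schwarz, for any $k\in\NN^*$,
\[\E_0\big[\deg_{\DT(\xiz)}(0)^k\big]\leq\sum_{L\geq 1}\E_0\big[\#(\xiz\cap B(0,L))^{2k}\big]^{1/2}\,\P_0[L-1<M\leq L]^{1/2}.\]
Covering $B(0,L)$ by $O(L^d)$ unit boxes and using H\"older together with {\bf (D')} (which provides uniform moments of every order for the count in each unit box, since the tail of $\#(\xiz\cap(x+[-N/2,N/2]^d))$ decays stretched-exponentially uniformly in $x$) yields $\E_0[\#(\xiz\cap B(0,L))^{2k}]\leq C_k L^{2kd}$. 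Combined with the stretched-exponential tail on $M$ from the first step, the series converges, which proves the claim.

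The main obstacle is the geometric step in the tail estimate for $M$: the circumscribed ball $B(c,\rho)$ may be arbitrarily large and far from the origin when $\rho$ is much bigger than $R$, so one has to shrink it to an empty ball of controlled size \emph{and} controlled location before {\bf (V')} can be applied, which is precisely the role of the rescaling to $B((R/2)\hat u,R/4)$ and of the finite net on the sphere. Once this reduction is achieved, the remaining estimates are standard manipulations based on {\bf (D')} and Cauchy--Schwarz.
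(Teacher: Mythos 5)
Your proposal is correct, but it follows a genuinely different geometric route from the paper. You attack $M:=\max_{x\sim 0}\Vert x\Vert$ first, via the empty-circumsphere characterization of Delaunay edges: a long Delaunay edge from the origin forces a large empty ball tangent to the origin, which you shrink to an empty axis-aligned cube of side of order $R$ at one of finitely many deterministic locations (the net on $\SS^{d-1}$), so that \textbf{(V')} plus a union bound yields $\P_0[M>R]\le N(d)e^{-\kappa R^d}$; you then bound $\deg_{\DT(\xiz)}(0)\le\#(\xiz\cap B(0,M))$ and control the moments via Cauchy--Schwarz, a unit-cube covering and \textbf{(D')}. The paper instead works entirely with Voronoi cells: on the event $\mathcal{A}_L$ that a belt of side-$L$ cubes surrounding $\mathbf{C}_L=[-(2\lceil\sqrt d\rceil+1)L,(2\lceil\sqrt d\rceil+1)L]^d$ all contain a point of $\xiz$ and the count in a slightly larger cube is controlled, every point of $\partial\mathbf{C}_L$ is within $\sqrt d\,L$ of its Voronoi nucleus, so the cells meeting $\partial\mathbf{C}_L$ have nuclei in $\overline{\mathbf{C}_L}$ and trap the cell of $0$; this immediately bounds both $\deg_{\DT(\xiz)}(0)$ and $M$ by quantities measurable from $\overline{\mathbf{C}_L}$, and \textbf{(V')}, \textbf{(D')} control $\P_0[\mathcal{A}_L^c]$. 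The paper's argument handles degree and edge length simultaneously from one event and never invokes the circumsphere; your argument isolates the tail of $M$ as a clean intermediate step (indeed a stretched-exponential one), which is conceptually transparent and aligns with the remark in the paper that these quantities in fact have exponential moments. Both strategies rest on the same hypotheses and yield the same conclusion.
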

\begin{proof} Let us consider a collection $\{C^L_j,j\in J_L\}$ of cubes of side $L$ that, as in Figure \ref{DessinDegre}, surround $\mathbf{C}_L=[-(2\lceil\sqrt{d}\rceil+1) L,(2\lceil\sqrt{d}\rceil+1) L]^d$. Note that $\#J_L=O(L^{d-1})$. Let $\mathcal{A}_L$ be the event `{\it $\#(C^L_j\cap\xi^0)\geq 1$ for all $j\in J_L$ and $\#(\overline{\mathbf{C}_L}\cap\xi^0)\leq \const{5}(8\lceil\sqrt{d}\rceil+2)^d L^d$}' where:
\[\overline{\mathbf{C}_L}=\left[-(4\lceil\sqrt{d}\rceil+1) L,(4\lceil\sqrt{d}\rceil+1) L\right]^d.\] 

On $\mathcal{A}_L$, since each $C^L_j$ contains a point of $\xi^0$, points on the boundary of $\mathbf{C}_L$ are within  distance at most $\sqrt{d}L$ from the nuclei of there respective Voronoi cells. Note that $0\in\xi^0$ can not be one of these nuclei. It follows that the union of these Voronoi cells separates $0$ (which is in its own cell) from the exterior of the union of the cells intersecting $\mathbf{C}_L$. Thus, on $\mathcal{A}_L$, we have:
\[\deg_{\DT(\xi^0)}(0)\leq \#\left(\overline{\mathbf{C}_L}\cap\xi^0\right)\leq \const{5}(8\lceil\sqrt{d}\rceil+2)^d L^d.\]

Hence, for $\const{5}(8\lceil\sqrt{d}\rceil+2)^d L^d\leq D <\const{5}(8\lceil\sqrt{d}\rceil+2)^d (L+1)^d$, {\bf (V')}, {\bf (D')} and the union bound imply that:
\begin{align*}
\P_0\left[\deg_{\DT(\xi^0)}(0)\geq D\right]&\leq \P_0\left[\deg_{\DT(\xi^0)}(0)\geq \const{5}(8\lceil\sqrt{d}\rceil+2)^d L^d\right]\\&\leq \P_0\left[\mathcal{A}_L^c\right]\\
&\leq \sum_{j\in J_L}  \P_0\left[\#\left(C^L_j\cap\xi^0\right)=0\right]\\
&\qquad\qquad+ \P_0\left[\#\left(\overline{\mathbf{C}_L}\cap\xi^0\right)\geq \const{5}(8\lceil\sqrt{d}\rceil+2)^d L^d\right]\\
&\leq O(L^{d-1})e^{-\const{4}L^d}+e^{-\const{6}(8\lceil\sqrt{d}\rceil+2)^d L^d}=O(e^{- \const{14}L^d}).
\end{align*}

Finally, for $k\in\NN^*$, we have:
\begin{align*}
\E_0\left[\left(\deg_{\DT(\xi^0)}(0)\right)^k\right]&\leq \sum_{D=1}^\infty D^k\P_0\left[\deg_{\DT(\xi^0)}(0)\geq D\right]\\
& \leq \sum_{D=1}^{\lfloor \const{5}(8\lceil\sqrt{d}\rceil+2)^d\rfloor} D^k\P_0\left[\deg_{\DT(\xi^0)}(0)\geq D\right]\\
&\qquad\qquad+\sum_{L=1}^\infty O(L^{d(k+1)-1})\P_0\left[\deg_{\DT(\xi^0)}(0)\geq \const{5}(8\lceil\sqrt{d}\rceil+2)^d L^d\right]\\
& \leq \sum_{D=1}^{\lfloor \const{5}(8\lceil\sqrt{d}\rceil+2)^d\rfloor} D^k+\sum_{L=1}^\infty O(L^{d(k+1)-1}e^{- \const{14}L^d})<\infty.
\end{align*}

\begin{center}
\begin{figure}
\includegraphics[scale=0.7]{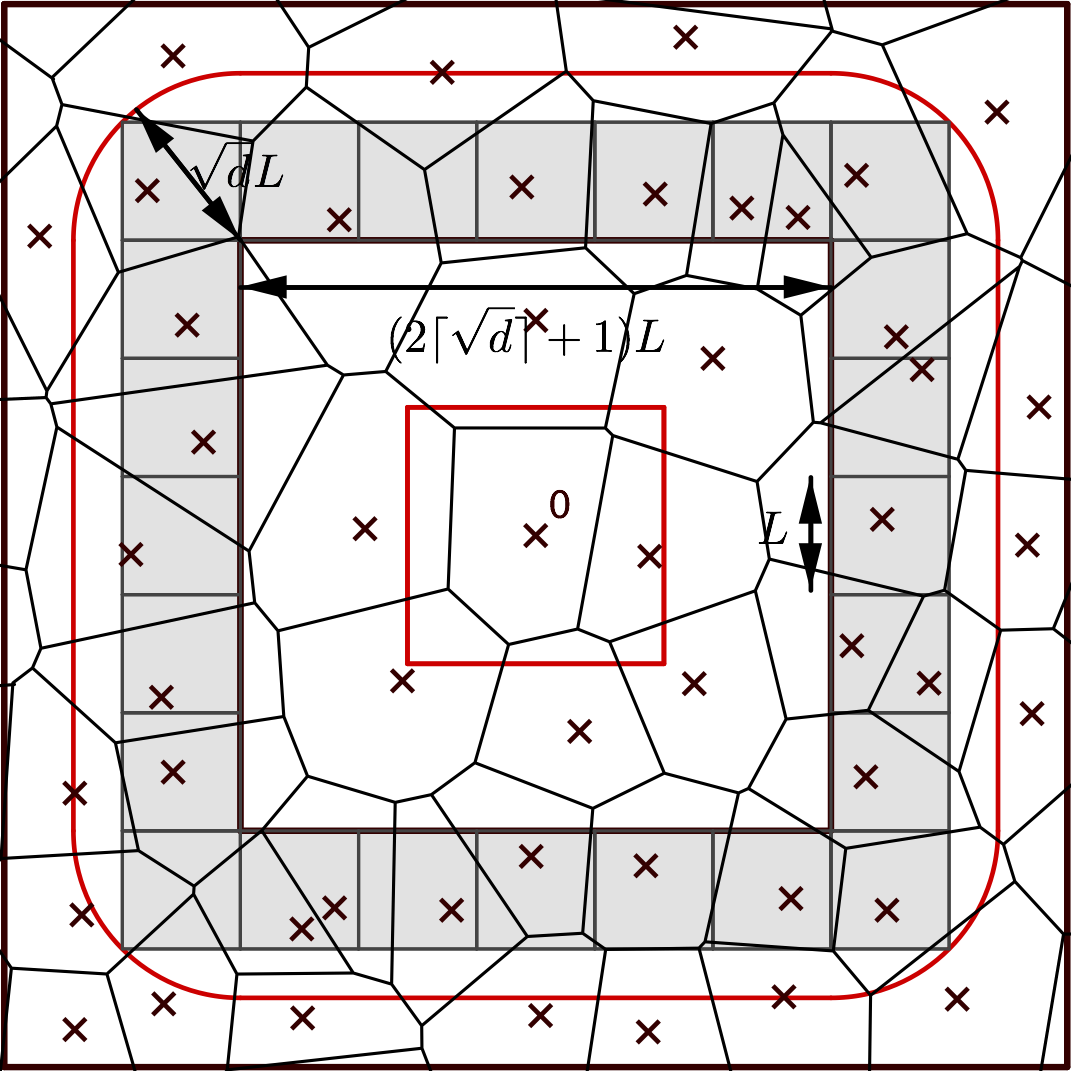}
\caption{\label{DessinDegre}If each (grey) cube of side $L$ that border $[-(2\lceil\sqrt{d}\rceil+1) L,(2\lceil\sqrt{d}\rceil+1) L]^d$ contains at least a point of $\xi^0$, then the nuclei of the Voronoi cells intersecting  the boundary of $[-(2\lceil\sqrt{d}\rceil+1) L,(2\lceil\sqrt{d}\rceil+1) L]^d$ lie in the region delimited by the red curves. In this case, the union of these cells surround $0\in\xi^0$ and the degree of $0$ in $\DT(\xi^0)$ is bounded by $\#([-(4\lceil\sqrt{d}\rceil+1) L,(4\lceil\sqrt{d}\rceil+1) L]^d\cap \xi^0)$.}
\end{figure}
\end{center}
Polynomial moments for $\max_{x\sim 0 \mbox{ in }\DT(\xi^0)}(0)$ can be proved to be finite similarly noticing that, on $\mathcal{A}_L$, $\max_{x\sim 0 \mbox{ in }\DT(\xi^0)}(0)$ is generously bounded by $\sqrt{d}(4\lceil\sqrt{d}\rceil+1)L$.
\end{proof}
\subsection[Good boxes and paths constructions]{Good boxes and paths constructions for applications of Proposition \ref{PropNonDege}}\label{SectGoodBoxes3}
Due to the sub-graph relation $\operatorname{G_n}(\xi)\subset \Gab(\xi)\subset \DT (\xi)$, it is enough to construct good boxes and paths for the creek-crossing graphs $\operatorname{G_n}, \,\operatorname{n}\geq 2$ to apply Proposition \ref{PropNonDege} and obtain Theorem \ref{ThprincAIP}. Note that the constructions can be easily made by hand in the case of Delaunay triangulations as in the Chapter 2 of \cite{Phd}.

We will use \cite[Lemma 8]{HNGS} to verify assumptions of Proposition \ref{PropNonDege}. 

\begin{lemm}[Lemma 8 in \cite{HNGS}]\label{LemmeHNGS}
Fix $\operatorname{n}\geq 2$. Assume that $\xi$ is distributed according to a simple, stationary, isotropic and $m-$dependent point process satisfying {\bf (V)} and {\bf (D)}. Assume in addition that its second factorial moment measure is absolutely continuous with respect to $2d-$Lebesgue measure and its density is bounded from above by some positive constant.

There exist then $0<\const{15},\const{16}<\infty$ such that:
\begin{equation*}
\P\big[\mathcal{A}_{\operatorname{G_n},x,L}\big]\geq 1-\const{15}\exp \big(-L^{\const{16}}\big),
\end{equation*}
where $\mathcal{A}_{\operatorname{G_n},x, L}$ is the event \emph{`any two vertices of $\operatorname{G_n}$ $u,v \in x+[-\frac{L}{2}, \frac{L}{2}]^d$ are connected by a $\operatorname{G_n}$-path contained in $x+[-L,L]^d$'}.
\end{lemm}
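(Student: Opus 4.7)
The plan is a block-renormalization argument at a mesoscale $\ell=\ell(L)\gg m$ to be tuned at the end. Partition $x+[-L,L]^d$ into cubes $\{Q_\z\}$ of side $\ell$ and declare $Q_\z$ \emph{good} when (i) every sub-cube of $Q_\z^+$ of side $\ell/K_1$ contains at least one point of $\xi$ (density lower bound) and (ii) $\#(\xi\cap Q_\z^+)\leq C_*\ell^d$ (density upper bound), where $Q_\z^+$ is the concentric cube of side $K_2\ell$ and the constants $K_1,K_2,C_*$ depend only on $n,d$ and are fixed in the third paragraph below. By {\bf (V)} applied at scale $\ell/K_1$, clause (i) fails with probability at most $(K_1K_2)^d e^{-c(\ell/K_1)^d}\leq e^{-c'\ell^d}$ for $\ell$ large; by {\bf (D)}, for $C_*$ large enough, so does (ii). A union bound over the $(2L/\ell)^d$ cubes yields
\begin{equation*}
\PP\bigl[\exists\,\z:\ Q_\z\text{ bad}\bigr]\leq (2L/\ell)^{d}\,e^{-c''\ell^{d}},
\end{equation*}
and choosing $\ell=\lfloor L^{\alpha}\rfloor$ for some small $\alpha\in(0,1)$ absorbs the polynomial prefactor into a stretched exponential $\exp(-L^{c_{16}})$ of the required form.

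On the event $\mathcal{G}_L$ that every cube intersecting $x+[-L,L]^d$ is good, fix a reference vertex $v_\z\in\xi\cap Q_\z$ near the center of $Q_\z$ (which exists thanks to (i)). For arbitrary $u,v\in\xi\cap(x+[-L/2,L/2]^d)$, the production of a $G_n(\xi)$-path inside $x+[-L,L]^d$ then reduces to the following \emph{local claim}: for any pair of face-adjacent good cubes $Q_{\z_1},Q_{\z_2}$, there is a $G_n(\xi)$-path from $v_{\z_1}$ to $v_{\z_2}$ contained in $Q_{\z_1}\cup Q_{\z_2}$, together with the analogous statement linking $u$ or $v$ to the reference vertex of its own cube. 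Granting the local claim, I concatenate paths along any chain of face-adjacent good cubes joining the cube of $u$ to that of $v$; the resulting path stays inside $x+[-L,L]^d$ since every cube of the chain does.

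The local claim is the substantive step and the one I expect to be the main obstacle. The structural feature of creek-crossing graphs I would exploit is their \emph{locality}: whether $\{\x,\y\}\in E_{G_n(\xi)}$ depends only on $\xi\cap B(\x,n\Vert\x-\y\Vert)$, since any obstructing sequence $u_0=\x,\dots,u_k=\y$ with $k\leq n$ and $\Vert u_i-u_{i+1}\Vert<\Vert\x-\y\Vert$ lies in that ball. Under clause (i), every edge of the Euclidean minimum spanning tree $T$ of $\xi\cap Q_\z^+$ has length at most $2\sqrt{d}\,\ell/K_1$: indeed for any longer candidate edge $\{\x,\y\}$, the midpoint of $[\x,\y]$ lies within $\sqrt{d}\,\ell/K_1$ of some $\z\in\xi$, giving $\max(\Vert\x-\z\Vert,\Vert\z-\y\Vert)<\Vert\x-\y\Vert$ and ruling out $\{\x,\y\}$ from $T$ by the cycle property. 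Such a $T$-edge admits no strictly shorter detour within $\xi\cap Q_\z^+$, and taking $K_2$ large (of order $n/K_1$) ensures its locality ball of radius $O(n\ell/K_1)$ fits inside $Q_\z^+$; this upgrades the detour exclusion to $\{\x,\y\}\in E_{G_n(\xi)}$. The local claim then follows by extracting a $T$-path from $v_{\z_1}$ to $v_{\z_2}$; the most delicate bookkeeping of the scheme is to guarantee that this $T$-path can be routed so as to remain in $Q_{\z_1}\cup Q_{\z_2}$, which I would ensure by choosing the reference vertices near the centers of the cubes, by running a greedy tree path along an inflated straight segment from $v_{\z_1}$ to $v_{\z_2}$, and by using (ii) to bound the number of hops. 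Calibrating $K_1,K_2,C_*$ so that the three scales (edge length $\ell/K_1$, locality radius $O(n\ell/K_1)$, fattening $K_2\ell$) fit together for a single choice of constants depending only on $n,d$ is the remaining piece of the argument.
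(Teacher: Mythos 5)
The paper does not prove this statement; it is imported verbatim as Lemma~8 of \cite{HNGS}. So there is no in-paper argument to compare against, and what you wrote is a blind reconstruction. The outer shell of your argument is fine: the renormalization at mesoscale $\ell\sim L^\alpha$, declaring a block good by combining a void estimate {\bf (V)} at scale $\ell/K_1$ with a density-upper estimate {\bf (D)}, and the union bound over $(L/\ell)^d$ blocks do produce a stretched-exponential bound $\exp(-L^{c})$ after absorbing the polynomial prefactor. The identification of the locality radius $n\|\x-\y\|$ for a creek-crossing edge, and the observation that every edge of the local MST of $\xi\cap Q_\z^+$ has Euclidean length $\le 2\sqrt d\,\ell/K_1$ on a good block, are also correct; combined with the locality, such an edge is indeed an edge of $G_n(\xi)$ provided its ball of radius $2n\sqrt d\,\ell/K_1$ sits inside $Q_\z^+$.

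The genuine gap is exactly the step you flag as ``delicate bookkeeping'': you have no control on where the MST path between $v_{\z_1}$ and $v_{\z_2}$ goes. A tree has a \emph{unique} path between two vertices, so there is no latitude to ``route a greedy tree path along an inflated segment'' --- that phrase does not correspond to any operation on the MST. Moreover, the thing you would need is false in general: even under your density hypotheses (every $s$-cube of the window occupied, no more than $C_*\ell^d$ points), the MST path between two points at Euclidean distance $\Theta(\ell)$ may wander geometrically over a region much larger than any fixed multiple of $\ell$. As a consequence the MST path between $v_{\z_1}$ and $v_{\z_2}$ may run up to the boundary of $Q_\z^+$, precisely where the locality balls no longer fit inside $Q_\z^+$, so you cannot conclude that the edges used are edges of $G_n(\xi)$. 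Enlarging the window (using $Q_\z^{++}\supset Q_\z^+$) does not help, because the MST of the larger window can again send the path to its own boundary. Because of this, your ``local claim'' --- that adjacent good blocks have reference vertices joined by a $G_n(\xi)$-path confined to a box of side $O(\ell)$ --- is not established, and it is the crux of the lemma. Any correct proof needs a path construction with built-in geometric confinement (rather than an MST path extracted post hoc), which is the actual content of the cited argument in \cite{HNGS} and is not reducible to the MST cycle property plus counting.
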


In the sequel, we assume that the underlying point process satisfies the hypothesis of Lemma \ref{LemmeHNGS}.

\subsubsection{Definition of the good boxes}
For $K\geq 1$ to be determined later, consider a partition of $\RR^d$ into boxes of side $K$:
\[B_\z=B^K_\z:=K\z+\Big[-\frac{K}{2},\frac{K}{2}\Big]^d,\,\z\in \ZZ^d.\]
We say that a box $B_{\z_1}$ is \emph{nice} if:
\begin{itemize}
\item[{\it -i-}] $\# \big(B_{\z_1}\cap\xi\big)\leq \const{2}K^d$,
\item[{\it -ii-}] when $B_{\z_1}$ is (regularly) cut into $\alpha_d^d:=(4\lceil\sqrt{d}\rceil+5)^d$ sub-boxes $b_i^{\z_1}$ of side $s:=K/\alpha_d$, each of these sub-boxes contains at least one point of $\xi$.
\end{itemize}

For $B_{\z_1}$ and $B_{\z_2}$ two neighboring boxes, we write $b^{\z_1,\z_2}_i$, $i=1,\dots,\alpha_d+1$, for the sub-boxes of side $s$ intersecting the line segment $[K\z_1,K\z_2]$. We denote by $c^{\z_1,\z_2}_i$ the center of the small box $b^{\z_1,\z_2}_i$ and by $v^{\z_1,\z_2}_i$ the nucleus of the Voronoi cell of $c^{\z_1,\z_2}_i$. 

We finally say that the box $B_{\z_1}$ is \emph{good} for $\operatorname{G_n}$ if it is nice and:
\begin{itemize}
\item[{\it -iii-}] for any ${\z_2}\in \ZZ^d$ with $\Vert {\z_1}-{\z_2}\Vert_\infty=1$ and any $i\in\{1,\dots, 2\lceil\sqrt{d}\rceil+3\}$, $\mathcal{A}_{\operatorname{G_n},c^{{\z_1},{\z_2}}_i,(1+\sqrt{d})s}$ holds.
\end{itemize}
 
\subsubsection{Construction of paths}
For each good box $B_{\z_i}$, we choose the nucleus of the Voronoi cell of $K\z_i$ as reference vertex and denote it by  $v_{\z_i}$. Thanks to {\it -ii-} in the definition of good boxes, one can check that $v_{\z_i}\in B_{\z_i}$.

Consider two neighboring good boxes $B_{\z_1}$ and $B_{\z_2}$. We want to verify that there exists a $\operatorname{G_n}(\xi)$-path between $v_{\z_1}$ and $v_{\z_2}$ so that condition (\ref{CondNonDege2}) of Proposition \ref{PropNonDege} holds. To this end, let us consider the collection $v^{\z_1,\z_2}_1=v_{\z_1},\dots ,v^{\z_1,\z_2}_{\alpha_d+1}=v_{\z_2}$ of the nuclei of the Voronoi cells of the centers of the sub-boxes $b^{\z_1,\z_2}_i$, $1\leq i\leq\alpha_d+1$.  We claim that, for $i=1,\dots, \alpha_d$, there is a $\operatorname{G_n}(\xi)$-path $\gamma^{\z_1,\z_2}_i$ between $v^{\z_1,\z_2}_i$ and $v^{\z_1,\z_2}_{i+1}$ included in $B_{\z_1}\cup B_{\z_2}$. Condition {\it -ii-} in the definition of nice boxes implies that $v^{\z_1,\z_2}_i$ and $v^{\z_1,\z_2}_{i+1}$ belong to $c^{\z_1,\z_2}_i+[-(1+\sqrt{d})s,(1+\sqrt{d})s]^d$. The existence of $\gamma^{\z_1,\z_2}_i$ then follows from {\it -iii-} in the definition of good boxes and the fact that $c^{\z_1,\z_2}_i+[-(1+\sqrt{d})s,(1+\sqrt{d})s]^d\subset B_{\z_1}\cup B_{\z_2}$. Finally, we obtain the path between $v_{\z_1}$ and $v_{\z_2}$ needed in Proposition \ref{PropNonDege} by concatenating the $\gamma_i$s and removing any loop.
This path has length at most $2\const{2}K^d$ since it is simple and contained in the union of two good boxes. 
\subsubsection{Domination}
Since the underlying point process $\xi$ is supposed $m$-dependent, the process of good boxes has a finite range of dependence $k$. Clearly, the fact that a box $B_{\z_1}$ is nice or not depends only on the behavior of $\xi$ in this box. Moreover, due to the definition of the edge set of $\operatorname{G_n}(\xi)$, the fact that two vertices $u,v\in \xi$ are connected by an edge in $\operatorname{G_n}(\xi)$ only depends on $B(u,n\Vert u-v\Vert)\cap\xi$. Hence, in order to decide if $\mathcal{A}_{\operatorname{G_n},c^{{\z_1},{\z_2}}_i,(1+\sqrt{d})s}$ holds or not, we only need to know $B\big(c^{{\z_1},{\z_2}}_i,(2n+1)\sqrt{d}(1+\sqrt{d})s\big)\cap\xi$.
Thanks to \cite[Theorem 0.0]{Liggett97}, we know that there exists $p^*=p^*(d,k)<1$ such that if $\P\big[B_\z\mbox{ is good}\big]\geq p^*$, then the process of the good boxes $\mathbb{X}=\{X_\z, \z\in\ZZ^d\}$ dominates an independent site percolation process on $\ZZ^d$ with parameter greater than $p^\text{site}_c(\ZZ^2)$. We will chose $s$ and thus $K$ such that $\P\big[B_\z\mbox{ is good}\big]\geq p^*$.

Using {\bf (V)} and {\bf (D)}, one can show that, for $s$ large enough:
\begin{align*}
\P\big[B_\z\mbox{ is not nice}\big]&\leq \frac{1-p^*}{2}. 
\end{align*}

Moreover, using stationarity and isotropy of $\xi$ and Lemma \ref{LemmeHNGS}:
\begin{align*}
\P\big[\exists \z'\sim \z,\exists i\in&\{1,\dots, 2\lceil\sqrt{d}\rceil+3\}\mbox{ s.t. }\mathcal{A}_{\operatorname{G_n},c^{{\z},{\z'}}_i,(1+\sqrt{d})s}\mbox{ does not hold}\big]\\
&\leq\sum_{\z'\sim \z}\sum_{i=1}^{2\lceil\sqrt{d}\rceil+3}\P\big[(\mathcal{A}_{\operatorname{G_n},c^{{\z_1},{\z_2}}_i,(1+\sqrt{d})s})^c\big]\\
&=2d(2\lceil\sqrt{d}\rceil+3)\P\big[(\mathcal{A}_{\operatorname{G_n},0,(1+\sqrt{d})s})^c\big]\\
&=2d(2\lceil\sqrt{d}\rceil+3)\const{15}\exp \big(-(1+\sqrt{d})^{\const{16}}s^{\const{16}}\big)\\
&\leq \frac{1-p^*}{2}, 
\end{align*}
for $s$ large enough. We have just verified that the hypotheses listed in Subsection \ref{condprocAIP} allow us to apply Propositions \ref{PropCV} and \ref{PropNonDege} in the cases of Delaunay triangulations, Gabriel graphs and \emph{creek-crossing} graphs. Theorem \ref{ThprincAIP} thus follows.
\appendix
\section{Technical results for the proof of Lemma \ref{LemmeComparaisonDesCoefficientsDeDiffusion}}
In this appendix we collect preliminary results used in the proof of Lemma \ref{LemmeComparaisonDesCoefficientsDeDiffusion} and the assumptions of Proposition \ref{PropNonDege} are supposed to hold.

\subsection{Stationary point processes and their Palm versions}
We first recall useful facts concerning stationary point processes and their Palm versions.
\begin{lemm}\label{LemmeUtilPoisson}
\begin{enumerate}
\item\label{LemmeUtilPoisson0} For $p\geq 1$, if $A$ is the disjoint union of $k$ unit cubes, then
\begin{equation*}
\E \big[\#\big(\xi\cap A\big)^p\big]\leq k^p \E \big[\#\big(\xi\cap [-1/2,1/2]^d\big)^p\big].
\end{equation*} 
\item\label{LemmeUtilPoissonPre1} Let $1\leq p\leq 8$. Let $(A_n)_{n\in\NN}$ be a convex averaging sequence of bounded Borel sets in $\RR^d$, that is a sequence of convex sets satisfying $A_n\subset A_{n+1}$ and $r(A_n)\longrightarrow\infty$ where $r(A):=\sup\{r:\,A\mbox{ contains a ball of radius }r\}$. The following ergodic result holds:
\begin{equation*}
\frac{\#\big(\xi\cap A_n\big)}{\lambda \Vol(A_n)}\xrightarrow[]{ \quad a.s.,\,\,L^p(\N,\P)\quad}1,
\end{equation*} 
where $\lambda$ denotes the intensity of the point process.
\item\label{LemmeUtilPoisson1} Let $1\leq p\leq 8$. For any $C\in \RR$:
\begin{equation*}
\frac{\lambda \Vol(C_{2N})}{\#\big(\xi\cap C_{2N}\big)+CN^{d-1}}\xrightarrow[]{\quad L^p(\N,\P)\quad }1.
\end{equation*} 
\item\label{LemmeUtilPoisson2} For any  $h\in L^2(\Nz,\P_0)$:
\begin{equation*}
\E\Big[\int_A h(\tau_x\xi)\xi(\d x)\Big]=\lambda \Vol(A) \E_0[h],\,\forall A\in\mathcal{B}(\RR^d).
\end{equation*} 
\item\label{LemmeUtilPoisson3} For any $h\in L^2(\Nz,\P_0)$:
\begin{equation*}
\lim_{N\rightarrow\infty}\E\big[\gm_N^\xi\big( h(\tau_x\xi)\big)\big]=\E_0[h],
\end{equation*} 
with $\gm_N^\xi$ as defined in Subsection \ref{PeriodizedMedium}.
\item\label{LemmeUtilPoisson4} Fix $0<\alpha<1$ and define $M=M(N,\alpha):=N-\lfloor N^\alpha\rfloor$. For any $h\in L^2(\Nz,\P_0)$:
\begin{equation*}
\lim_{N\rightarrow\infty}\E\Bigg[\frac{1}{\# (\xi\cap C_{2N})+\#(\Gamma_N)}\int_{C_{2M}} h(\tau_x\xi)\xi(\d x)\Bigg]=\E_0[h].
\end{equation*}
\item\label{VimpliesV'} If the stationary point process satisfies {\bf (V)} and $\E[\#(\xi\cap [-1/2,1/2]^d)^2]<\infty$, then its Palm version satisfies {\bf (V')}.
\item\label{DimpliesD'} If the stationary point process satisfies {\bf (D)} and $\E[\#(\xi\cap [-1/2,1/2]^d)^2]<\infty$, then its Palm version satisfies {\bf (D')}.
\end{enumerate}
\end{lemm}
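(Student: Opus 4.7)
The lemma packages eight standard facts about stationary point processes and their Palm versions, and I would prove them in the order given since each relies on its predecessors.

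For (\ref{LemmeUtilPoisson0}), decompose $\#(\xi\cap A)=\sum_{i=1}^k\#(\xi\cap A_i)$, apply the power-mean inequality $(\sum a_i)^p\le k^{p-1}\sum a_i^p$, and conclude by the stationarity of $\xi$. For (\ref{LemmeUtilPoissonPre1}), the a.s. convergence is the spatial ergodic theorem for stationary ergodic point processes (the $k$-dependence gives ergodicity). Upgrading to $L^p$ for $p\le 8$ amounts to showing uniform integrability of $(\#(\xi\cap A_n)/\Vol(A_n))^8$; covering $A_n$ with unit cubes and invoking (\ref{LemmeUtilPoisson0}) together with the hypothesis $\E[\#(\xi\cap[0,1]^d)^8]<\infty$ yields a uniform moment bound. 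Statement (\ref{LemmeUtilPoisson1}) is (\ref{LemmeUtilPoissonPre1}) applied to the convex averaging sequence $C_{2N}$, noting that the perturbation $CN^{d-1}$ is negligible compared to $\Vol(C_{2N})=(2N)^d$. Statement (\ref{LemmeUtilPoisson2}) is the Campbell--Mecke formula for stationary point processes.

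For (\ref{LemmeUtilPoisson3}), decompose the sum defining $\gm_N^\xi(h(\tau_\cdot\xi))$ over $\xi\cap\oC_{2N}$ and over $\Gamma_N$. Taking $\E$, the first contribution equals $\lambda\Vol(C_{2N})\E_0[h]$ by (\ref{LemmeUtilPoisson2}), while $\#\Gamma_N=O(N^{d-1})$ makes the boundary piece vanish in the limit; dividing by the normalizing factor and invoking (\ref{LemmeUtilPoisson1}) (together with a Cauchy--Schwarz bound to control the ratio jointly) gives the limit $\E_0[h]$. Statement (\ref{LemmeUtilPoisson4}) is the same argument restricted to $C_{2M}$: since $\Vol(C_{2M})/\Vol(C_{2N})\to 1$ as $M=N-\lfloor N^\alpha\rfloor$ with $\alpha<1$, the limit is still $\E_0[h]$.

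The only nontrivial items are (\ref{VimpliesV'}) and (\ref{DimpliesD'}); I would treat them in parallel using the Palm inversion formula applied to $B=[-1/2,1/2]^d$:
\[\E_0[g(\xi^0)]=\frac{1}{\lambda}\E\Big[\sum_{y\in\xi\cap B}g(\tau_y\xi)\Big].\]
For (\ref{VimpliesV'}), take $g(\xi)=\mathbf{1}\{\#(\xi\cap(x+[-L/2,L/2]^d))=0\}$. For every $y\in B$, the shifted cube $y+x+[-L/2,L/2]^d$ contains $x+[-(L-1)/2,(L-1)/2]^d$, so the indicator is dominated by $\mathbf{1}\{\#(\xi\cap(x+[-(L-1)/2,(L-1)/2]^d))=0\}$. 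Applying Cauchy--Schwarz to separate $\#(\xi\cap B)$ from this indicator, using the second moment hypothesis on $\#(\xi\cap B)$, stationarity of $\xi$, and the void estimate (V) with cube of side $L-1$, I obtain a bound of the form $C\,e^{-c_1(L-1)^d/2}$. Since $(L-1)^d\ge (1-\varepsilon)L^d$ for $L$ large, this is at most $e^{-c_4 L^d}$ for any $c_4<c_1/2$ and $L$ large enough. The argument for (\ref{DimpliesD'}) is identical in structure: for $y\in B$, the event $\{\#(\tau_y\xi\cap(x+[-L/2,L/2]^d))\ge c_5 L^d\}$ is contained in $\{\#(\xi\cap(x+[-(L+1)/2,(L+1)/2]^d))\ge c_5 L^d\}$, whose probability decays like $e^{-c_3(L+1)^d}$ by (D), provided $c_5$ is chosen larger than $c_2$ (adjusting for the slight enlargement). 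The main obstacle I expect is precisely this bookkeeping in (\ref{VimpliesV'}) and (\ref{DimpliesD'}): ensuring that the shift by $y\in B$ and the Cauchy--Schwarz prefactor do not erode the exponential rate in $L^d$; all other parts are routine consequences of stationarity, ergodicity and Campbell--Mecke.
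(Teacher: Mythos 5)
Your treatment of points (\ref{LemmeUtilPoisson0}), (\ref{LemmeUtilPoissonPre1}), (\ref{LemmeUtilPoisson2}), (\ref{LemmeUtilPoisson3}), (\ref{LemmeUtilPoisson4}) follows the paper's route, and your proof of (\ref{VimpliesV'}) and (\ref{DimpliesD'}) is correct via a slightly cleaner decomposition: you dominate the Palm indicator uniformly over $y\in[-1/2,1/2]^d$ by shrinking (resp.\@ enlarging) the cube by one unit per side, whereas the paper cuts $x+[-L/2,L/2]^d$ into the $2^d$ sub-cubes of side $L/2$ meeting at $x$ and notes that emptiness of the shifted cube forces emptiness of at least one sub-cube. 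Both give exponential decay in $L^d$, and your version yields a marginally better constant.

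The genuine gap is in point (\ref{LemmeUtilPoisson1}). You present it as a direct corollary of (\ref{LemmeUtilPoissonPre1}), with the observation that $CN^{d-1}$ is a lower-order perturbation, but this does not work: in (\ref{LemmeUtilPoisson1}) the count sits in the \emph{denominator}, and $L^p$ convergence of $X_N:=\#(\xi\cap C_{2N})/(\lambda\Vol(C_{2N}))\to 1$ does not imply $L^p$ convergence of its reciprocal. On the event $\{\#(\xi\cap C_{2N})\leq\const{7}N^d\}$ the quotient $\lambda\Vol(C_{2N})/(\#(\xi\cap C_{2N})+CN^{d-1})$ is only bounded by a quantity of order $N$, so its $p$-th moment can diverge unless that event has probability $o(N^{-p})$. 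This is exactly where the paper invokes hypothesis (\ref{ConProcCV}) of Proposition \ref{PropNonDege}, namely $\lim_{N\to\infty}N^8\,\P\big[\#(\xi\cap[-N,N]^d)\leq\const{7}N^d\big]=0$: the paper splits the expectation on that event and its complement, bounds the contribution of the bad event by $(\lambda N/C)^p\,\P[\#(\xi\cap C_{2N})\leq\const{7}N^d]\to 0$, and on the complementary event the quotient is uniformly bounded so dominated convergence applies to the a.s.\@ convergence from (\ref{LemmeUtilPoissonPre1}). Your uniform-integrability argument for (\ref{LemmeUtilPoissonPre1}) (eighth-moment bound via covering with unit cubes) only controls the upper tail of the count and is silent about the lower tail, so it cannot supply the missing ingredient; without citing (\ref{ConProcCV}) the step fails.
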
 
\begin{dem} Since the point process is stationary, (\ref{LemmeUtilPoisson0}) is immediate. Point (\ref{LemmeUtilPoissonPre1}) is a consequence of \cite[Proposition 12.2.II]{DVJvol2} and of the ergodicity of $\P$. Observe that:
\begin{align*}
\E\Bigg[\Bigg\vert \frac{\lambda\Vol (C_{2N})}{\#\big(\xi\cap C_{2N}\big)+CN^{d-1}}&-1\Bigg\vert^p\Bigg]\\
&\leq\Big(\frac{\lambda N}{C}\Big)^p\P\Big[\#\big(\xi\cap C_{2N}\big)\leq \const{7} N^d\Big]\\
&\qquad+ \E\Bigg[\Bigg\vert \frac{\lambda\Vol (C_{2N})}{\#\big(\xi\cap C_{2N}\big)+CN^{d-1}}-1\Bigg\vert^p\mathbf{1}_{\#(\xi\cap C_{2N})\geq \const{7} N^d}\Bigg].
\end{align*}
Thanks to assumption (\ref{ConProcCV}) in Proposition \ref{PropNonDege}, the first summand vanishes in the limit. Using point (\ref{LemmeUtilPoissonPre1}), one has:
\[ f_N(\xi):=\Bigg\vert \frac{\lambda\Vol (C_{2N})}{\#\big(\xi\cap C_{2N}\big)+CN^{d-1}}-1\Bigg\vert^p\mathbf{1}_{\#(\xi\cap C_{2N})\geq \const{7} N^d}\xrightarrow[\, N\rightarrow \infty\,]{\quad a.s. \quad}0.\]
Since $\vert f_N\vert$ is bounded, the second summand goes to 0 by the dominated convergence theorem and point (\ref{LemmeUtilPoisson1}) is proved.  Point (\ref{LemmeUtilPoisson2}) is a special case of Campbell formula (see for example \cite[Theorem 3.3.3]{SW}). Thanks to point (\ref{LemmeUtilPoisson2}), in order to derive point (\ref{LemmeUtilPoisson3}), it suffices to show that:
\begin{equation*}
\lim_{N\rightarrow\infty}\E\Bigg[\Big(\frac{1}{\# (\xi\cap C_{2N})+\#(\Gamma_N)}-\frac{1}{\lambda\Vol(C_{2N-2r_c})}\Big)\int_{C_{2N-2r_c}} h(\tau_x\xi)\xi(\d x)\Bigg]=0.
\end{equation*}
Using four times the Cauchy-Schwarz inequality, and points \emph{\ref{LemmeUtilPoisson0}.} and \emph{\ref{LemmeUtilPoisson2}.}, one has:
\begin{align*}
\E\Bigg[\Big(&\frac{1}{\# (\xi\cap C_{2N})+\#(\Gamma_N)}-\frac{1}{\lambda\Vol(C_{2N-2r_c})}\Big)\int_{C_{2N-2r_c}} h(\tau_x\xi)\xi(\d x)\Bigg]^2\nonumber\\
\leq& \E\Bigg[\Big(\frac{\lambda\Vol(C_{2N-2r_c})}{\# (\xi\cap C_{2N})+\#(\Gamma_N)}-1\Big)^2\frac{\#(\xi \cap C_{2N-2r_c})}{\lambda\Vol (C_{2N-2r_c})}\Bigg]\nonumber\\
&\times\frac{1}{\lambda\Vol(C_{2N-2r_c})}\E\Bigg[\frac{1}{\#(\xi \cap C_{2N-2r_c})}\Big(\int_{C_{2N-2r_c}} h(\tau_x\xi)\xi(\d x)\Big)^2\Bigg]\nonumber\\
\leq& \E\Bigg[\Big(\frac{\lambda\Vol(C_{2N-2r_c})}{\# (\xi\cap C_{2N})+\#(\Gamma_N)}-1\Big)^2\frac{\#(\xi \cap C_{2N-2r_c})}{\lambda\Vol (C_{2N-2r_c})}\Bigg]
\\&\times \frac{1}{\lambda\Vol(C_{2N-2r_c})}\E\Bigg[\int_{C_{2N-2r_c}} h(\tau_x\xi)^2\xi(\d x)\Bigg]\nonumber\\
=& \E\Bigg[\Big(\frac{\lambda\Vol(C_{2N-2r_c})}{\# (\xi\cap C_{2N})+\#(\Gamma_N)}-1\Big)^2\frac{\#(\xi \cap C_{2N-2r_c})}{\lambda\Vol (C_{2N-2r_c})}\Bigg]\E_0\big[ h^2\big]\nonumber\\
\leq& \frac{1}{\lambda}\E\Bigg[\Big(\frac{\lambda\Vol(C_{2N-2r_c})}{\# (\xi\cap C_{2N})+\#(\Gamma_N)}-1\Big)^4\Bigg]^\frac{1}{2}\E\Bigg[\frac{\#(\xi \cap C_{2N-2r_c})^2}{\Vol (C_{2N-2r_c})^2}\Bigg]^\frac{1}{2}\E_0\big[ h^2\big]\nonumber\\
\leq& \frac{1}{\lambda}\E\Bigg[\Big(\frac{\lambda\Vol(C_{2N-2r_c})}{\# (\xi\cap C_{2N})+\#(\Gamma_N)}-1\Big)^4\Bigg]^\frac{1}{2}\E\big[\#(\xi \cap [-1/2,1/2]^d)^2\big]^\frac{1}{2}\E_0\big[ h^2\big].
\end{align*}
This completes the proof since $h\in L^2(\Nz,\P_0)$, $\E\big[\#(\xi \cap [-1/2,1/2]^d)^2\big]<\infty$ and by point (\ref{LemmeUtilPoisson1}):
\[\E\Bigg[\Big(\frac{\lambda\Vol(C_{2N-2r_c})}{\# (\xi\cap C_{2N})+\#(\Gamma_N)}-1\Big)^4\Bigg]\xrightarrow[\,N\longrightarrow\infty\,]{}0.\] 
The proof of point (\ref{LemmeUtilPoisson4}) is very similar to the previous one.

Let us show that {\bf (V)} and $\E[\#(\xi\cap [-1/2,1/2]^d)^2]<\infty$ imply {\bf (V')}. By the definition of the Palm measure, we have:
\[\P_0\left[\#\left(\xiz\cap \left(x+[-L/2,L/2]^d\right)\right)=0\right]=\frac{1}{\lambda}\E\left[\sum_{y\in\xi\cap [-1/2,1/2]^d}\mathbf{1}_{\#(\tau_y\xi\cap(x+[-L/2,L/2]^d))=0}\right],\]
where $\lambda$ denotes the intensity of the point process. Let us cut $x+[-L/2,L/2]^d$ into $2^d$ cubes of side $L/2$ having $x$ at a corner $C^1_{L/2},\dots,C^{2^d}_{L/2}$. If the sum in the last expression is not 0 and $L\geq 1$, then at least one of the $C^j_{L/2}$s is empty. Hence, the union bound and the Cauchy-Schwarz inequality imply that
\begin{align*}
\P_0\left[\#\left(\xiz\cap \left(x+[-L/2,L/2]^d\right)\right)=0\right]&\leq\frac{1}{\lambda}\E\left[\#\left(\xi\cap [-1/2,1/2]^d\right)
\mathbf{1}_{\#(\xi\cap (x+C^j_{L/2})=0\text{ for some }j\in\{1,\dots,2^d\}}
\right]\\
&\leq\frac{2^\frac{d}{2}}{\lambda}\E\left[\#\left(\xi\cap [-1/2,1/2]^d\right)^2\right]^\frac{1}{2}\P\left[\#(\xi\cap [-L/4,L/4]^d)=0\right]^\frac{1}{2}.
\end{align*}
Point (\ref{VimpliesV'}) then follows from {\bf (V)}. Similarly, we can write that
\begin{align*}
\P_0\left[\#\left(\xiz\cap \left(x+[-L/2,L/2]^d\right)\right)\geq \const{5}L^d\right]&=\frac{1}{\lambda}\E\left[\sum_{y\in\xi\cap [-1/2,1/2]^d}\mathbf{1}_{\#(\tau_y\xi\cap(x+[-L/2,L/2]^d))\geq \const{5}L^d}\right]\\
&\leq\frac{1}{\lambda}\E\left[\#\left(\xi\cap [-1/2,1/2]^d\right)
\mathbf{1}_{\#(\xi\cap (x+[-L,L]^d))\geq\const{5}L^d}
\right]\\
&\leq\frac{1}{\lambda}\E\left[\#\left(\xi\cap [-1/2,1/2]^d\right)^2\right]^\frac{1}{2}\P\left[\#(\xi\cap [-L,L]^d)\geq \const{5}L^d\right]^\frac{1}{2}.
\end{align*}
This proves point (\ref{DimpliesD'}) thanks to {\bf (D)} by choosing $\const{5}=2^d\const{2}$.
\end{dem}
\subsection{The mean boundary contributions are negligible}
\begin{lemm}\label{LemmMeanBContribNegl}
Let $h_N^\xi:\gV_N^\xi\longmapsto \RR$ be a function verifying:
\begin{equation}\label{EqComparaison4}
\big\vert h_N^\xi (x)\big\vert\leq \const{17}\left\lbrace
\begin{array}{ll}
\#\big(\xi \cap \overline{B_2(x,r_c)}\big)&\mbox{if }x\in\gQ_N^\xi\\
\dfrac{\#\big(B_N^\xi\big)}{\#\big(\Gamma_N\big)}&\mbox{if }x\in\Gamma_N
\end{array}
\right. ,
\end{equation}
for some $\const{17}<\infty$ independent of $N$. 

Then the mean boundary contributions vanish in the limit:
\begin{equation}\label{EqComparaison5}
\lim_{N\rightarrow\infty}\E \Bigg[\frac{1}{\#\big(\xi \cap C_{2N}\big)+\#\Gamma_N}\sum_{x\in\gV_N^\xi\setminus\gQ^\xi_{N-r_c}}\big\vert h_N^\xi (x)\big\vert^p\Bigg]=0,\quad 1\leq p\leq 4.
\end{equation}

In particular, this is the case for $h_N^\xi=\varphi_N^\xi$ or $\psi_N^\xi$ defined in (\ref{EqLemmPeriodizedMedium2}). 
\end{lemm}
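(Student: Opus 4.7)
The plan is to split the sum according to the decomposition $\gV_N^\xi\setminus\gQ_{N-r_c}^\xi=(\xi\cap W_N)\cup\Gamma_N$, where $W_N:=\oC_{2N}\setminus\oC_{2(N-r_c)}$ is a shell of volume $O(N^{d-1})$. Call the two corresponding partial sums $S_N^{(1)}$ and $S_N^{(2)}$. The hypothesis (\ref{EqComparaison4}) shows that $S_N^{(1)}$ is a measurable function of $\xi\cap(W_N+\overline{B_2(0,r_c)})$ while $S_N^{(2)}$ is a function of $\xi\cap(B_N^-\cup B_N^+)$; both of these reference regions sit in a boundary layer of $C_{2N}$ of volume $O(N^{d-1})$.

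The central idea is to decouple $S_N^{(i)}$ from the denominator by exploiting the finite range of dependence $k$ of $\xi$. I would choose an interior cube $C_N^{\mathrm{in}}:=C_{2(N-r_c-k-1)}$, so that its distance from the above reference regions is at least $k$; by the $k$-dependence of $\xi$, the random variable $S_N^{(i)}$ is then independent of $\#(\xi\cap C_N^{\mathrm{in}})$ for $i\in\{1,2\}$. Since $\#(\xi\cap C_{2N})+\#\Gamma_N\geq \#(\xi\cap C_N^{\mathrm{in}})+\#\Gamma_N$, this gives
\begin{equation*}
\E\Bigg[\frac{S_N^{(i)}}{\#(\xi\cap C_{2N})+\#\Gamma_N}\Bigg]\leq \E\big[S_N^{(i)}\big]\,\E\Bigg[\frac{1}{\#(\xi\cap C_N^{\mathrm{in}})+\#\Gamma_N}\Bigg].
\end{equation*}

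It then remains to estimate the two factors separately. For the first, Campbell's formula (Lemma \ref{LemmeUtilPoisson}(\ref{LemmeUtilPoisson2})) combined with (\ref{EqComparaison4}) yields $\E[S_N^{(1)}]\leq \const{17}^p \lambda\Vol(W_N)\E_0[\#(\xiz\cap\overline{B_2(0,r_c)})^p]$, and Lemma \ref{LemmeUtilPoisson}(\ref{LemmeUtilPoisson0}) applied in reverse shows the Palm moment on the right is finite whenever $p+1\leq 8$, which covers $1\leq p\leq 4$; hence $\E[S_N^{(1)}]=O(N^{d-1})$. For $i=2$, the uniform bound $S_N^{(2)}\leq \const{17}^p(\#B_N^\xi)^p(\#\Gamma_N)^{1-p}$ with Lemma \ref{LemmeUtilPoisson}(\ref{LemmeUtilPoisson0}) yields $\E[S_N^{(2)}]=O(N^{d-1})$ as well. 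For the denominator factor, I would apply hypothesis (\ref{ConProcCV}) to $C_N^{\mathrm{in}}$ and split the expectation on the event $\{\#(\xi\cap C_N^{\mathrm{in}})\geq c N^d\}$ and its complement, which produces $\E[1/(\#(\xi\cap C_N^{\mathrm{in}})+\#\Gamma_N)]=O(N^{-d})$. Multiplying the two bounds, each ratio is $O(N^{-1})\to 0$.

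The main subtlety to anticipate concerns the range $p=4$: a direct Cauchy--Schwarz estimate of $\E[(S_N^{(1)})^2]$ would require controlling $\E_0[\#(\xiz\cap\overline{B_2(0,r_c)})^{2p}]$, which for $p=4$ calls for a $9$th moment of $\#(\xi\cap [0,1]^d)$ that is simply not available from Subsection \ref{condprocAIP}. The independence-based decoupling above circumvents this difficulty: only the \emph{first} moment of $S_N^{(i)}$ enters, and this is finite on the entire range $1\leq p\leq 4$.
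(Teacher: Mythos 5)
Your proof takes a genuinely different route from the paper's. The paper never invokes any independence property: it separates the numerator from the denominator by Cauchy--Schwarz, bounding the $\Gamma_N$ contribution by $\E\big[\#(\Gamma_N)^2/(\#(\xi\cap C_{2N})+\#\Gamma_N)^2\big]^{1/2}\E\big[\#(B_N^\xi)^{2p}/\#(\Gamma_N)^{2p}\big]^{1/2}$ and the shell contribution by a similar two-fold Cauchy--Schwarz combined with Campbell's formula, then letting Lemma \ref{LemmeUtilPoisson} (\ref{LemmeUtilPoisson0})--(\ref{LemmeUtilPoisson2}) finish the job. Your observation that this route consumes $2p$-th Palm moments of $\#\big(\xiz\cap\overline{B_2(0,r_c)}\big)$ --- which for $p=4$ is not an automatic consequence of $\E[\#(\xi\cap[0,1]^d)^8]<\infty$ alone --- is a fair reading of the hypotheses, and your first-moment decoupling does buy a genuine saving there (in the concrete applications all moments are finite, so the issue is cosmetic, but the point stands at the level of the stated assumptions).

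The price you pay, however, is substantive: your key step invokes the finite range of dependence $k$, which is an assumption of Subsection \ref{condprocAIP} (hence of Theorem \ref{ThprincAIP}) but \emph{not} of Proposition \ref{PropNonDege}, whose hypotheses are the standing assumptions of this appendix. The paper's Cauchy--Schwarz argument works for any ergodic stationary process satisfying (\ref{ConProcCV}) and the moment bound; yours proves the lemma only for finitely dependent processes, so it establishes a strictly narrower statement than the one claimed. Two further points need repair, both minor. First, (\ref{EqComparaison4}) is only an upper bound, so $S_N^{(1)}$ itself need not be measurable with respect to $\xi$ restricted to a boundary layer; you must first pass to the explicit majorant $\widetilde S_N^{(1)}:=\const{17}^p\sum_{x\in\xi\cap W_N}\#\big(\xi\cap\overline{B_2(x,r_c)}\big)^p$ and decouple that (harmless, since everything is nonnegative). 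Second, your interior cube is too large: $W_N+\overline{B_2(0,r_c)}$ reaches inward to $\Vert x\Vert_\infty=N-2r_c$, so the separation between it and $C_{2(N-r_c-k-1)}$ is $k+1-r_c$, which is negative for $r_c=\sqrt{d+3}K\geq 2$; take $C_N^{\mathrm{in}}=C_{2(N-2r_c-k)}$ instead. With these corrections, and with the finite-range hypothesis added explicitly, your argument goes through and yields the quantitative rate $O(N^{-1})$.
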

\begin{dem}
Note that with (\ref{EqComparaison4}):
\begin{equation*}
\sum_{x\in\gV_N^\xi\setminus\gQ^\xi_{N-r_c}}\big\vert h_N^\xi (x)\big\vert^p\leq 
\const{17}^p\Bigg\{\#\Gamma_N\frac{\#\big(B_N^\xi\big)^p}{\#\big(\Gamma_N\big)^p}+ \sum_{x\in\gQ_N^\xi\setminus\gQ^\xi_{N-r_c}}\#\big(\xi\cap \overline{B_2(x,r_c)}\big)^p\Bigg\}.
\end{equation*}
But, on the one hand:
\begin{align}\label{EqComparaison7}
\E\Bigg[\frac{\#\Gamma_N}{\#\big(\xi \cap C_{2N}\big)+\#\Gamma_N}\frac{\#\big(B_N^\xi\big)^p}{\#\big(\Gamma_N\big)^p}\Bigg]&\leq\E\Bigg[\frac{\#(\Gamma_N)^2}{\big(\#\big(\xi \cap C_{2N}\big)+\#\Gamma_N\big)^2}\Bigg]^\frac{1}{2}\E\Bigg[\frac{\#\big(B_N^\xi\big)^{2p}}{\#\big(\Gamma_N\big)^{2p}}\Bigg]^\frac{1}{2}\nonumber\\
&\leq \const{18}\E\Bigg[\frac{\#\Gamma_N^2}{\big(\#\big(\xi \cap C_{2N}\big)+\#\Gamma_N\big)^2}\Bigg]^\frac{1}{2},
\end{align}
where $\const{18}$ is a constant which does not depend on $N$ thanks to the point (\ref{LemmeUtilPoisson0}) of Lemma \ref{LemmeUtilPoisson}. On the other hand, using the Cauchy-Schwarz inequality and Lemma \ref{LemmeUtilPoisson} (\ref{LemmeUtilPoisson2}):
\begin{align}\label{EqComparaison8}
\E \Bigg[&\frac{1}{\#\big(\xi \cap C_{2N}\big)+\#\Gamma_N}\sum_{x\in\gQ_N^\xi\setminus\gQ^\xi_{N-r_c}}\#\big(\xi\cap \overline{B_2(x,r_c)}\big)^p\Bigg]\nonumber\\
&\quad\leq \E \Bigg[\frac{\# \big(\xi\cap C_{2N}\setminus C_{2N-2r_c}\big)}{\big(\#\big(\xi \cap C_{2N}\big)+\#\Gamma_N\big)^2}\Bigg]^\frac{1}{2}\nonumber\\&\quad\quad\quad\times\E \Bigg[\frac{1}{\# \big(\xi\cap C_{2N}\setminus C_{2N-2r_c}\big)}\Bigg(\sum_{x\in\gQ_N^\xi\setminus\gQ^\xi_{N-r_c}}\#\big(\xi\cap \overline{B_2(x,r_c)}\big)^p\Bigg)^2\Bigg]^\frac{1}{2}\nonumber\\
& \quad\leq \left(\lambda\Vol \big(C_{2N}\setminus C_{2N-2r_c}\big)\right)^\frac{1}{2}\E \Bigg[\frac{\# \big(\xi\cap C_{2N}\setminus C_{2N-2r_c}\big)}{\big(\#\big(\xi \cap C_{2N}\big)+\#\Gamma_N\big)^2}\Bigg]^\frac{1}{2}\E_0 \Big[\#\big( \xi\cap B_2(0,r_c)\big)^{2p}\Big]^\frac{1}{2}\nonumber\\
& \quad\leq \Bigg(\frac{\Vol \big(C_{2N}\setminus C_{2N-2r_c}\big)}{\Vol \big(C_{2N}\big)}\Bigg)^\frac{1}{2}\E \Bigg[\frac{\lambda\Vol \big(C_{2N}\big)}{\#\big(\xi \cap C_{2N}\big)+\#\Gamma_N}\Bigg]^\frac{1}{2}\E_0 \Big[\#\big( \xi\cap B_2(0,r_c)\big)^{2p}\Big]^\frac{1}{2}.
\end{align}
Hence, (\ref{EqComparaison5}) is proved since the r.h.s.\@\xspace of (\ref{EqComparaison7}) and the r.h.s of (\ref{EqComparaison8}) go to 0 thanks to Lemma \ref{LemmeUtilPoisson} (\ref{LemmeUtilPoisson1}).

Let us check that $\varphi_N^\xi$ defined in (\ref{EqLemmPeriodizedMedium2}) satisfies (\ref{EqComparaison4}).
If $x\in\gQ_N^\xi$, then:
\begin{align*}
\big\vert\varphi_N^\xi(x) \big\vert \leq&\sum_{y\in\gQ_N^\xi}\mathbf{1}_{\{x,y\}\in E_{G(\xi)}, \Vert x-y\Vert\leq r_c}\big\vert y_1-x_1\big\vert\\
&+\frac{1}{\#\big(\Gamma_N\big)}\sum_{y\in\Gamma_N}\big(\mathbf{1}_{x\in B_N^{\xi,-}}\big\vert x_1+N\big\vert +\mathbf{1}_{x\in B_N^{\xi,+}}\big\vert x_1-N\big\vert\big)\\
\leq & \big(\#\big(\xi \cap \overline{B_2(x,r_c)}\big)-1\big)r_c+r_c=\#\big(\xi \cap \overline{B_2(x,r_c)}\big)r_c.
\end{align*}
If $x\in\Gamma_N$, then:
\begin{equation*}
\big\vert \varphi_N^\xi(x)\big\vert \leq\frac{1}{\#\big(\Gamma_N\big)}\sum_{y\in B_N^{\xi,-}}\big\vert y_1+N\big\vert+\frac{1}{\#\big(\Gamma_N\big)}\sum_{y\in B_N^{\xi,+}}\big\vert y_1-N\big\vert \leq \frac{r_c\#\big(B_N^\xi\big)}{\#\big(\Gamma_N\big)}.
\end{equation*}
Hence, $\varphi_N^\xi$ satisfies (\ref{EqComparaison4}) with $\const{17}=r_c$. The same computations show that $\psi_N^\xi$ satisfies (\ref{EqComparaison4}) with $\const{17}=r_c^2$.
\end{dem}
\subsection{Control of exit times}\label{SubsectPrLemmeHittingTimes}
 Let us give a sketch of proof of the following result which is a slight rewriting of \cite[Lemma 5]{FSS}.
\begin{lemm}\label{LemmeHittingTimes}
Let $\mathcal{T}_N^\xi$ and $\mathcal{T}_{N,x}$ be as in (\ref{DefHittingTimes}) and (\ref{DefHittingTimesEnvironment}), $0<\alpha<1$ and $M:=N-\lfloor N^\alpha\rfloor$. Then,
\begin{equation}\label{EqLemmeHittinTimes1}
\lim_{N\rightarrow\infty}\E\Big[\gm_N^\xi\big(\mathbf{1}_{x\in C_{2M}}\bP_{N,x}^\xi\big[\mathcal{T}_N^\xi\leq t\big]\big)\Big]=0,
\end{equation}
\begin{equation}\label{EqLemmeHittinTimes2}
\lim_{N\rightarrow\infty}\E\Big[\gm_N^\xi\big(\mathbf{1}_{x\in C_{2M}}\widehat{\bP}_{\tau_x\xi}\big[\mathcal{T}_{N,x}\leq t\big]\big)\Big]=0.
\end{equation}
\end{lemm}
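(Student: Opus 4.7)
The plan is to reduce both statements to a Markov-type inequality on the maximum displacement of the walker, combined with the moment estimate from Lemma \ref{lemmMoments}. The key geometric input is that for $x\in C_{2M}$ one has $\mathrm{dist}_\infty(x,\partial C_{2N-2r_c})\geq \lfloor N^\alpha\rfloor-r_c$. I first check that, in both dynamics, the walker cannot leave $C_{2N-2r_c}$ without accumulating a displacement of at least this size via jumps of Euclidean length at most $r_c$. For the cutoff environment process $\underline{\hat{\xi}^0}$, this is immediate from the definition (\ref{EqCutOff1}) of $\widehat{c}_{x,y}^\xi$. For the periodized walk on $\gG_N^\xi$, the ``long'' edges (those with conductance $1/\#\Gamma_N$) have an endpoint in $\Gamma_N$ and a $\gQ_N^\xi$-endpoint in $B_N^{\xi,\pm}\subset \oC_{2N}\setminus C_{2N-2r_c}$; hence, before the exit time $\mathcal{T}_N^\xi$, the walker stays in $\gQ_N^\xi\cap C_{2N-2r_c}$ and can only use edges of $G(\xi)$ of length $\leq r_c$. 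In either case
\[\{\mathcal{T}\leq t\}\subseteq \Big\{\sup_{s\leq t}\|\omega_s-x\|_\infty\geq \lfloor N^\alpha\rfloor-r_c\Big\}.\]

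Next, I apply Markov's inequality: for an integer $\gamma\geq 1$,
\[\bP_{N,x}^\xi\big[\mathcal{T}_N^\xi\leq t\big]\leq \frac{1}{(\lfloor N^\alpha\rfloor-r_c)^\gamma}\bE_{N,x}^\xi\Big[\sup_{s\leq t}\|\omega_s-x\|_\infty^\gamma\Big],\]
and similarly for $\widehat{\bP}_{\tau_x\xi}$. The required moment bound is obtained by replaying the computation of Lemma \ref{lemmMoments} with $X_t$ replaced by its running supremum: decompose $\omega_s-x$ into a compensated martingale plus its drift, apply Doob's $L^\gamma$-inequality to the martingale part and Jensen's inequality to the drift, and use that $\deg_{G(\xiz)}(0)$ and $\max_{y\sim 0}\|y\|$ admit polynomial moments under $\P_0$. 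Since the $r_c$-cutoff restricts all relevant jumps to $B_2(0,r_c)$, the estimate takes the form
\[\bE_{N,x}^\xi\Big[\sup_{s\leq t}\|\omega_s-x\|_\infty^\gamma\Big]+\widehat{\bE}_{\tau_x\xi}\Big[\sup_{s\leq t}\|X_s(\underline{\hat{\xi}^0})\|_\infty^\gamma\Big]\leq C_{t,\gamma}\,g(\tau_x\xi)\]
with $g\in L^1(\Nz,\P_0)$ (depending only on local functionals of $\tau_x\xi$ such as $\deg_{G(\tau_x\xi)}(0)$ and $\#(\tau_x\xi\cap B_2(0,r_c))$).

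Finally, I integrate against $\gm_N^\xi$ and take $\P$-expectation. By Lemma \ref{LemmeUtilPoisson}(\ref{LemmeUtilPoisson4}), $\E[\gm_N^\xi(\mathbf{1}_{x\in C_{2M}}g(\tau_x\xi))]$ converges to $\E_0[g]<\infty$ and is in particular bounded uniformly in $N$. Combining the previous two steps,
\[\E\Big[\gm_N^\xi\big(\mathbf{1}_{x\in C_{2M}}\bP_{N,x}^\xi[\mathcal{T}_N^\xi\leq t]\big)\Big]\leq \frac{C_{t,\gamma}}{(\lfloor N^\alpha\rfloor-r_c)^\gamma}\,\E\Big[\gm_N^\xi\big(\mathbf{1}_{x\in C_{2M}}g(\tau_x\xi)\big)\Big]\xrightarrow[\,N\rightarrow\infty\,]{}0,\]
which yields (\ref{EqLemmeHittinTimes1}); the same computation, applied to the cutoff dynamics and using stationarity of $(\widehat{\xi}_s^0)_{s\geq 0}$ under $\widehat{\mathbf{P}}$, gives (\ref{EqLemmeHittinTimes2}).

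The hard part will be the uniform-in-$\xi$ moment bound on the running supremum: one has to make sure that the argument of Lemma \ref{lemmMoments}, written there for the bulk process under the annealed measure $P$, goes through for the periodized and cutoff dynamics with a constant depending on the environment through a function in $L^1(\Nz,\P_0)$, so that Lemma \ref{LemmeUtilPoisson}(\ref{LemmeUtilPoisson4}) can be invoked. The $r_c$-cutoff makes this essentially local, and the polynomial-moment hypotheses on $\deg_{G(\xiz)}(0)$ and $\max_{y\sim 0}\|y\|$ close the loop.
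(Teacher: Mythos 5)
The core difficulty in your proposal is hidden in the sentence asserting a quenched moment bound of the form
\[
\bE_{N,x}^\xi\Big[\sup_{s\leq t}\|\omega_s-x\|_\infty^\gamma\Big]+\widehat{\bE}_{\tau_x\xi}\Big[\sup_{s\leq t}\|X_s\|_\infty^\gamma\Big]\leq C_{t,\gamma}\,g(\tau_x\xi)
\]
with $g$ ``depending only on local functionals of $\tau_x\xi$ such as $\deg_{G(\tau_x\xi)}(0)$ and $\#(\tau_x\xi\cap B_2(0,r_c))$.'' No such bound can hold. Even with the $r_c$-cutoff, the walk starting at $x$ can after one jump reach a site with arbitrarily high cutoff degree $D$; it then jumps at rate $D$, so $n^{\xi^0}_*(t)$ is of order $D$ with positive probability and $\widehat{\bE}_{\tau_x\xi}[\sup_{s\leq t}\|X_s\|^\gamma]$ is at least of order $(D\,r_c)^\gamma$, completely uncontrolled by $\deg_{G(\tau_x\xi)}(0)$ or $\#(\tau_x\xi\cap B_2(0,r_c))$. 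The ``replay Lemma \ref{lemmMoments} with Doob'' step glosses over the fact that Lemma \ref{lemmMoments} is an \emph{annealed} estimate, integrated against $\P_0$; there is no quenched analogue with a local bound. Note also that Lemma \ref{LemmeUtilPoisson} (\ref{LemmeUtilPoisson4}) requires $h\in L^2(\Nz,\P_0)$, not $L^1$.

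One could try to salvage the approach by taking $g(\xi^0):=\widehat{\bE}_{\xi^0}[\sup_{s\leq t}\|X_s\|^\gamma]$ (a genuine, non-local function on $\N_0$) and verifying that $g\in L^2(\Nz,\P_0)$, which amounts to an annealed running-supremum bound $\E_0[\widehat{\bE}_{\xi^0}[\sup_{s\leq t}\|X_s\|^{2\gamma}]]<\infty$. This is not what Lemma \ref{lemmMoments} gives (neither the supremum nor the second $\P_0$-moment), so you would need to prove a genuine extension of it; the drift part $\int_0^s\varphi(\xi^0_u)\,\d u$ is bounded by $t\sup_{u\leq t}\|\varphi(\xi^0_u)\|$, and this supremum is again a non-local random quantity that must be controlled along the path, so the Doob-plus-Jensen route is by no means routine. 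The paper sidesteps all of this with a different, spatial argument: it first couples the periodized walk and the cutoff environment process exactly up to the exit time (so the two expectations in (\ref{EqLemmeHittinTimes1})--(\ref{EqLemmeHittinTimes2}) coincide, rather than being treated analogously), then partitions the buffer $C_{2N-2r_c}\setminus C_{2M}$ into unit cubes, observes that the walk must visit at least $\lfloor N^\alpha\rfloor-r_c$ distinct cubes before exiting, couples the holding times at first-visited sites in these cubes with \emph{independent} exponentials with parameters $\#(\xi\cap\widetilde{Q}_i)$, and shows that a large fraction of these parameters are bounded with high probability. This replaces any moment estimate on the running supremum by an estimate on a sum of independent exponentials, which only needs elementary percolation-type tail bounds. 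Your geometric observation $\{\mathcal{T}\leq t\}\subseteq\{\sup_{s\leq t}\|\omega_s-x\|_\infty\geq\lfloor N^\alpha\rfloor-r_c\}$ is correct and is implicitly the first step of the paper's proof as well, but as stated your way of exploiting it does not close.
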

\begin{dem}
Recall the notations from Subsections \ref{CutOff} and \ref{PeriodizedMedium}.
We first note that the expectations in (\ref{EqLemmeHittinTimes1}) and (\ref{EqLemmeHittinTimes2}) coincide. Actually, for each $N\in\NN^*$, each locally finite set $\xi\subset\RR^d$ and each $x\in\xi$, one can define a probability measure $\mu$ on $\Omega_N^\xi\times\Xi^0$ such that:
\[\mu(A\times \Xi^0)=\bP_{N,x}^\xiz (A), \quad \forall A\in\mathcal{B}(\Omega_N^\xi),\]
\[\mu(\Omega_N^\xi\times B)=\widehat{\bP}_{\tau_x\xi} (B), \quad \forall B\in\mathcal{B}(\Xi^0),\]
and such that, $\mu$-a.s.\@\xspace, $\mathcal{T}_N^\xi=\mathcal{T}_{N,x}$ and $\omega_t=x+X_t(\hat{\underline{\xiz}})$, $0\leq t<\mathcal{T}_N^\xi$. The existence of such a coupling implies that $\bP_{N,x}^\xi[\mathcal{T}_N^\xi\leq t]=\widehat{\bP}_{\tau_x\xi}[\mathcal{T}_{N,x}\leq t]$ and we only need to prove (\ref{EqLemmeHittinTimes1}).

To this end, let us part $C_{2N-2r_c}\setminus C_{2M}$ into unit cubes $Q_i=Q_{N,i},\,i\in I=I_N$. For $n\in \NN^*$, we set $I_n^*:=\{\underline{l}:=(l_1,\dots,l_n)\in I^n:\,l_j\neq l_k,\,j\neq k\}$. For $\underline{\omega}\in \Omega_N^\xi$ such that $\mathcal{T}_N^\xi(\underline{\omega})<\infty$, let us define:
\[k=k(\underline{\omega}):=\#\big\{i\in I:\,\exists\, 0\leq t<\mathcal{T}_N^\xi(\underline{\omega}),\,\omega_t\in Q_i\big\},\]
and $\underline{i}:=(i_1,\dots\,i_k)\in I_k^*$ the list of such indices sorted in the chronological order of first visits. For $j=1,\dots, k$, we denote by $x_j$ the first site of $Q_j$ visited by $\underline{\omega}$ and by $t_j$ the time spent at $x_j$ during the first visit. 

Now let  $\{T_i^{\xi},\,i\in I,\,\xi\in\N\}$ be a family of independent exponential random variables with respective parameters
 $\#\big(\xi\cap \widetilde{Q}_i\big)$, where
\[\widetilde{Q}_i:=\{y \in \RR^d :\, \operatorname{dist}(y,Q_i) \leq r_c\}.\]
Remark that given $\xi$, $k$  and $\underline{x}=(x_1, \dots, x_k)$, $t_j $ ($1\leq j \leq k$) are independent exponential variables  with respective parameters no larger than $\#\big(\xi\cap \widetilde{Q}_i\big)$. Thus, $n,\,\underline{y}$ being fixed:
\begin{equation}\label{EqComparaison14}
\bP_{N,x}^\xi\big[\mathcal{T}_N^\xi\leq t\,\big\vert\,k=n, \underline{x}=\underline{y}\big]\leq \PP\big[T_{l_1}^\xi +\dots +T_{l_n}^\xi\leq t\big].
\end{equation}
Since $k\geq k_{\min}:= \lfloor N^\alpha\rfloor-r_c $, one has using (\ref{EqComparaison14}):
\begin{align}\label{EqComparaison15}
\E\Big[\gm_N^\xi&\big(\mathbf{1}_{x\in C_{2M}}\bP_{N,x}^\xi\big[\mathcal{T}_N^\xi\leq t\big]\big)\Big]\nonumber\\
&= \sum_{n=\lfloor N^\alpha\rfloor-r_c}^{\#I}\sum_{\underline{l}\in I_n^*}\E\Big[\gm_N^\xi\big(\mathbf{1}_{x\in C_{2M}}\sum_{\underline{y}\in \prod_{j=1}^nQ_{l_j}\cap \gV_N^\xi}\bP_{N,x}^\xi\big[\mathcal{T}_N^\xi\leq t,\,k=n, \underline{x}=\underline{y}\big]\big)\Big]\nonumber\\
&\leq \sum_{n=\lfloor N^\alpha\rfloor-r_c}^{\#I}\sum_{\underline{l}\in I_n^*}\E\Big[\gm_N^\xi\big(\mathbf{1}_{x\in C_{2M}}\bP_{N,x}^\xi\big[k=n, \underline{i}=\underline{l}\big]\big)\times\PP\big[T_{l_1}^\xi +\dots +T_{l_n}^\xi\leq t\big]\Big].
\end{align}
It remains to bound the r.h.s of (\ref{EqComparaison15}). To this end, for $\kappa >0$ and $\underline{l}\in I_n^*$, let us define the event:
 \[\mathcal{A}=\mathcal{A}(\kappa,\underline{l})=\Big\{\xi\in\N:\,\#\big\{j:\,1\leq j \leq n,\,\# (\xi\cap \widetilde{Q}_j)>\kappa \E\big[\# (\xi\cap \widetilde{Q}_1)\big]\big\}>\frac{n}{2}\Big\}.\]

Due to the Markov inequality and the stationarity of $\P$, one has:
\begin{align*}
\P(\mathcal{A})&\leq \frac{2}{n}\E\Big[\#\big\{j:\,1\leq j \leq n,\,\# (\xi\cap \widetilde{Q}_j)>\kappa \E\big[\# (\xi\cap \widetilde{Q}_1)\big]\big\}\Big]\\
&\leq 2\P\Big[\# (\xi\cap \widetilde{Q}_1)>\kappa \E\big[\# (\xi\cap \widetilde{Q}_1)\big]\Big]\xrightarrow[\,\kappa\rightarrow\infty\,]{}0.
\end{align*}

Note that if $\xi\not\in\mathcal{A}$, at least $\lceil n/2\rceil$ of the exponential random variables $T_{l_1}^\xi,\dots, T_{l_n}^\xi$ have parameters smaller than $\kappa \E[\# (\xi\cap \widetilde{Q}_1)]$. It follows that on $\mathcal{A}^c$:
\[\PP\big[T_{l_1}^\xi +\dots +T_{l_n}^\xi\leq t\big]\leq \rho (n,\kappa) :=e^{-\kappa \E[\# (\xi\cap \widetilde{Q}_1)]t}\sum_{r=\lceil \frac{n}{2}\rceil}^\infty\frac{\big(\kappa \E\big[\# (\xi\cap \widetilde{Q}_1)\big]t\big)^r}{r!}.\]

Finally, we deduce by collecting bounds that:
\begin{align*}
\E\Big[\gm_N^\xi&\big(\mathbf{1}_{x\in C_{2M}}\bP_{N,x}^\xi\big[\mathcal{T}_N^\xi\leq t\big]\big)\Big]\nonumber\\
&\leq \E\Big[\gm_N^\xi\big(\mathbf{1}_{x\in C_{2M}} \sum_{n=\lfloor N^\alpha\rfloor-r_c}^{\#I}\sum_{\underline{l}\in I_n^*}\bP_{N,x}^\xi\big[k=n, \underline{i}=\underline{l}\big]\big) \big(\mathbf{1}_{\mathcal{A}}+\mathbf{1}_{\mathcal{A}^c}\rho (\lfloor N^\alpha\rfloor -r_c,\kappa)\big)\Big]\\
&\leq 2\P\Big[\# (\xi\cap \widetilde{Q}_1)>\kappa \E\big[\# (\xi\cap \widetilde{Q}_1)\big]\Big]+\rho (\lfloor N^\alpha\rfloor -r_c,\kappa).
\end{align*}

This gives the result by letting $N$ and then $\kappa$ go to the infinity.
\end{dem}
\section{Examples of point processes}\label{Expp}
In this appendix, we check that the assumptions given in Subsection \ref{condprocAIP} are satisfied by PPPs, MCPs and MHPs.
\subsection{Poisson point processes}
Stationarity, isotropy, ergodicity, aperiodicity and the finite range of dependence condition directly follow from the definition of homogeneous Poisson point processes. It is also known that stationary Poisson point processes are almost surely in general position. Standard computations give {\bf (V)}, {\bf (D)} and that $\#(\xi\cap [0,1]^d)$ admit polynomial moments of any order. The second factorial intensity measure of a PPP($\lambda$) admits $\lambda^2$ as density w.r.t.\@\xspace the $(2d)$-Lebesgue measure (see {\it e.g.\@\xspace} \cite[(2.30)]{ChiuSKM}).

\subsection{Mat\'ern cluster processes}
Mat\'ern cluster processes (MCPs) are particular cases of Neyman-Scott Poisson processes (see \cite[p. 171]{ChiuSKM}). Cluster processes are used as models for spatial phenomena, {\it e.g.\@\xspace} galaxy locations in space \cite{Galaxy} or epicenters of micro-earthquake locations \cite{VJEarthquakes}. 

MCPs are constructed as follows. One first chooses a PPP $Y$ of intensity $\lambda$ called the \emph{parent process}. For any $y\in Y$, a centered \emph{daughter process} $\xi_y$ is then chosen such that, given $Y$, $\{\xi_y\}_{y\in Y}$ are mutually independent PPP  with intensity $\mu$ in $B(0,R)$. Then, $\xi:=\bigcup_{y\in Y}(y+\xi_y)$ is a MCP with parameters $\lambda, \mu, R$. It is clear that such processes are simple, stationary and isotropic. Since its parent process has a finite range of dependence and daughter processes have bounded supports, any MCP has a finite range of dependence. MCPs can be seen as Cox processes (see \cite[p. 166]{ChiuSKM}). Their (diffusive) random intensity measures $\mu_Y$ have densities $Z^Y(x):=\sum_{y\in Y}\II_{B(y,R)}(x)$ w.r.t.\@\xspace the Lebesgue measure. In particular, $(d-1)$ dimensional hyperplanes and spheres are $\mu_Y-$null sets and MCPs are almost surely in general position and aperiodic. Assumptions {\bf (V)} and {\bf (D)} can be proved to hold as in \cite[\S 6.2]{RTCRWRG}. Standard computations also show that $\#(\xi\cap [0,1]^d)$ admit polynomial moments of any order. The second factorial moment measure of a MCP($\lambda, \mu, R$) is given, for $A_1,A_2\subset \RR^d$ two Borel sets, by:
\begin{align*}
\Lambda^{(2)}(A_1\times A_2)&=\E\left[\sum_{x_1\in\xi\cap A_1}\sum_{x_2\in\xi\cap A_2\setminus\{x_1\}}1\right]\\
&=\E\left[\E\left[\sum_{x_1\in\xi\cap A_1}\sum_{x_2\in\xi\cap A_2\setminus\{x_1\}}1\Bigg\vert Y\right]\right]\\
&=\E\left[\int_{A_1}\int_{A_2}Z^Y(x_1)Z^Y(x_2)\d x_1\d x_2\right]\\
&=\int_{A_1}\int_{A_2}\E\left[Z^Y(x_1)Z^Y(x_2)\right]\d x_1\d x_2,
\end{align*}
where we used the expression of the second factorial moment measure of a PPP driven by $Z^Y$ (see {\it e.g.\@\xspace} \cite[(2.28)]{ChiuSKM}) and the Fubini-Tonelli theorem.
Hence, it is absolutely continuous w.r.t.\@\xspace the $(2d)$-Lebesgue measure and admits
\[\E\left[Z^Y(x_1)Z^Y(x_2)\right]=\mu^2\left(\lambda^2\Vol \left(B(0,R)\right)^2+\lambda \Vol \left(B(x_1,R)\cap B(x_2,R)\right)\right)\]
as density.
\subsection{Mat\'ern hardcore processes}
In 1960, Mat\'ern introduced several hardcore models for point processes. These processes are dependent thinnings of PPPs and spread more regularly in space than PPPs. Such models are useful when competition for resources exists ({\it e.g.\@\xspace} tree or city locations, see \cite{Matern} and references therein). 

Given a realization $\xi$ of an initial  PPP of intensity $\lambda$ and $R>0$, let us define by:
\[\xi_{\text{I}}:=\big\{x\in\xi\,:\,\Vert x-y\Vert>R,  \forall y\in\xi\setminus\{x\}\big\}.\]
Then, $\xi_{\text{I}}$ is distributed according to a \emph{Mat\'ern I hardcore process} (MHP I). 

One can show that this point process satisfies {\bf (V)} in the same way as in \cite[\S 6.2]{RTCRWRG}. Its isotropy and stationarity follow from the construction. Since the MHP I is stochastically dominated by the initial PPP all the other hypotheses can be directly deduced from the case of Poisson point processes.

Given a realization $\xi$ of an initial  PPP of intensity $\lambda$, independent marks $\{T_x\}_{x\in\xi}$ uniformly distributed in $[0,1]$ and $R>0$, let us define:
\[\xi_{\text{II}}:=\big\{x\in\xi\,:T_x<T_y,  \forall y\in\xi\cap B(x,R)\big\}.\]  
Then, $\xi_{\text{II}}$ is distributed according to a \emph{Mat\'ern II hardcore process} (MHP II). One can check as above that MHP II satisfies the assumptions of Theorem \ref{ThprincAIP}. 

\subsection*{Acknowledgements}
The author thanks Jean-Baptiste Bardet and Pierre Calka, his PhD advisors, for introducing him to this subject and for helpful discussions, comments and suggestions. The author also thanks an anonymous referee for his/her careful reading and for his/her comments that significantly improved the paper.This work was partially supported by the French ANR grant PRESAGE (ANR-11-BS02-003) and the French research group GeoSto (CNRS-GDR3477).

\bibliography{biblio}
\bibliographystyle{alpha}
\end{document}